\documentclass[11pt]{amsart}

\usepackage{amsmath}
\usepackage{amssymb}
\usepackage{graphicx}
\usepackage{xcolor}
\usepackage[T1]{fontenc}
\usepackage{calligra}

\newtheorem{theorem}{Theorem}[section]
\newtheorem{lemma}[theorem]{Lemma}
\newtheorem{corollary}[theorem]{Corollary}

\theoremstyle{definition}
\newtheorem{definition}[theorem]{Definition}
\newtheorem{example}[theorem]{Example}

\theoremstyle{remark}

\numberwithin{equation}{section}

\def\V{\Vert}

\def\ve{\vert\vert\vert}

\begin{document}

\title [Projection schemes  and the fixed point property]{ Projection schemes and the fixed point property in nonreflexive Banach spaces}

\author{T. Dom\'{\i}nguez Benavides; M. Jap\'on; P.K. Lin}

\address{Tom\'as Dom\'{\i}nguez Benavides\hfill\break
 Universidad de
Sevilla, Spain}
 \email{tomasd@us.es}
 \address{M. Jap\'on \hfill\break
Universidad de
Sevilla,  Spain}
 \email{japon@us.es}

 \address{P.K. Lin \hfill\break
University of Memphis,  USA}
 \email{pklin@memphis.edu}

\thanks{The first and second authors  are supported by  the Grant PID2023-148294NB-I00 funded by MICIU/AEI/ 10.13039/501100011033,   and the  projects “Programa operativo PPIT-FEDER Andalucía 2021-2027 SOL2024-31596 and SOL2024-31708, and María de Maeztu CEX2024-001517-M}

\keywords{Fixed point property, Nonexpansive mappings, Schauder basis, projection schemes, variable Lebesgue spaces}

\subjclass[2020]{46E30, 47H09, 47H10, 46B20}


\begin{abstract}
We introduce the notion of a projection scheme on a Banach space and develop an abstract framework for studying the fixed point property (FPP) for nonexpansive mappings. Within this framework, we obtain a general criterion ensuring the FPP for a broad class of nonreflexive Banach spaces. In particular, every uniformly convex Banach space admits an isometric embedding into a nonreflexive Banach space having the FPP. This criterion  unifies and extends several previously known fixed point results for nonreflexive Banach spaces with Schauder bases.

As an application of our general result, we characterize the FPP for variable Lebesgue spaces. More precisely, we prove that if  $(\Omega,\Sigma,\mu)$ is a $\sigma$-finite measure space and $p:\Omega \to [1,\infty]$ is a measurable function,  then $L^{p(\cdot)}(\Omega)$ has the FPP if and only if it does not contain an isometric copy of $\ell_1$. Consequently, as in the classical Lebesgue space framework, the presence of an isometric copy of $\ell_1$ completely characterizes the failure of the FPP in variable Lebesgue spaces. This result yields a class
of nonreflexive Banach  spaces with the fixed point property that contains isometric copies of infinitely many $L^p$-spaces, $1<p<\infty$.  Although nonreflexive sequence spaces with the FPP are known, to the best of our knowledge, these variable Lebesgue spaces provide the first examples of nonreflexive Banach function lattices with the FPP.

\end{abstract}

\maketitle

\section{Introduction}

A Banach space $X$ is said to have the fixed point property (FPP) if every nonexpansive mapping $T:C \to C$, defined on a nonempty, convex, closed, and bounded subset $C \subset X$, admits a fixed point. If the same conclusion holds whenever  $C$ is weakly compact, then $X$ is said to have the weak fixed point property (w-FPP). These two properties coincide for reflexive Banach spaces.

The study of fixed points for nonexpansive mappings is closely related to the geometry of Banach spaces. This connection dates back to the seminal work of Browder \cite{Br,Br2}, who proved that Hilbert spaces, and more generally uniformly convex Banach spaces, have the FPP. Since then, substantial effort has been devoted to identifying geometric conditions ensuring either the FPP or the w-FPP.

The classical nonreflexive spaces $\ell_1$, $c_0$, and $\ell_\infty$ fail the FPP, while it remains an open problem whether every reflexive Banach space has the FPP.  For many years, it was believed  that the FPP might characterize reflexivity. This was disproved by  P.K. Lin \cite{Lin-2008}, who constructed  an equivalent norm on   $\ell_1$   under which the renormed space has the FPP. Subsequently,  further renormings of $\ell_1$ and several other non-isomorphic-to-$\ell_1$ Banach spaces   were obtained with the FPP (but still hereditarily $\ell_1$) \cite{alfredo,todos, japon2, Lin-2010}.

\medskip

Our main goal is to investigate  the FPP beyond reflexivity. To this end, we will introduce the notion of a projection scheme, namely, a family $\mathcal{P}$ of projections  on a Banach space $X$ that abstracts some features of the canonical projections associated with a Schauder basis.    A projection scheme induces a locally convex vector topology  on $X$, together with a   corresponding notion of $\mathcal{P}$-convergence that reduces to coordinatewise convergence in the presence of the standard projections of a Schauder basis. 

These notions provide an abstract framework for developing new techniques to study the existence of fixed points of nonexpansive mappings, which is one of the main topics investigated throughout the paper.

\medskip

Section 2 contains preliminary facts related to fixed point theory and the basic definition of variable Lebesgue spaces. In particular, we will introduce the concept of asymptotic centers, approximate asymptotic centers and some features concerning the structure of asymptotic centers when the Banach space is uniformly convex. 

Section 3 is devoted to the introduction of the notion of a projection scheme $\mathcal{P}$ as a family of projections that imitates some properties of the resolution of the identity. Every Banach space can be equipped with a projection scheme consisting solely of the identity mapping. Under the existence of a Schauder basis, the canonical projections constitute a projection scheme according to our definition.  Further  examples of  $\mathcal{P}$-convergence and bounded completeness are    provided.

Sections 4 and 5 contain the core of the paper. In Section 4, we study asymptotic compactness of approximate asymptotic centers  in connection with $\mathcal{P}$-almost uniformly convex projection schemes (see Definitions \ref{kUC} and \ref{AUC}).  As a consequence, we prove that every convex closed bounded $T$-invariant set contains an approximate fixed point sequence that is  $\mathcal P$-convergent whenever $T$ is a nonexpansive mapping.  This constitutes the main ingredient in the proof of our abstract fixed point theorem in Section 5 (Theorem \ref{main}), where we establish a general criterion ensuring the fixed point property for a broad class of nonreflexive Banach spaces satisfying the structural assumptions introduced in the previous sections. This criterion unifies and substantially extends several previously known fixed point theorems for Banach spaces with Schauder bases \cite{todos}, while considerably enlarges the class of nonreflexive Banach spaces with the FPP, including now spaces are very far from being hereditarily $\ell_1$.  In particular, we prove that every uniformly convex Banach space admits an isometric embedding into a nonreflexive Banach space having the FPP.



The main application of our fixed point theorem is stated in Section 6, devoted to variable exponent Lebesgue spaces $L^{p(\cdot)}(\Omega)$. These spaces generalize the classical $L^p$ spaces by allowing the exponent to vary pointwise, and they arise naturally within the framework of modular function spaces introduced by Nakano and further developed by Orlicz and Musielak \cite{MO, N}. Their modern development has been strongly influenced by applications to models of electrorheological fluids \cite{Ru}, which has led to an extensive study of their analytic and geometric properties.

A complete characterization of the w-FPP for variable Lebesgue spaces was obtained in \cite{DJ}, showing that $L^{p(\cdot)}(\Omega)$ has the w-FPP if and only if it does not contain an isometric copy of $L^1([0,1])$.  Our main application establishes that,  similarly to  classical Lebesgue spaces,
 the presence of an isometric copy of $\ell_1$ is precisely the obstruction to the FPP in this setting. More precisely, we prove that $L^{p(\cdot)}(\Omega)$ has the FPP if and only if it does not contain an isometric copy of $\ell_1$. Moreover, this condition can be characterized in terms of the behavior of the exponent function $p$. Specifically, if $(\Omega,\Sigma,\mu)$ is a $\sigma$-finite measure space and $p:\Omega \to [1,\infty]$ is measurable, then $L^{p(\cdot)}(\Omega)$ has the FPP if and only if
\[
\operatorname*{ess\,sup}\{p(t): t\in\Omega,\ p(t)<\infty\}<\infty
\]
and the sets $\{t\in\Omega:\ p(t)=1\}$ and $\{t\in\Omega:\ p(t)=\infty\}$ contain only finitely many atoms.

In the final section, we present further applications of our main theorem. For instance,  we give an affirmative answer to the open question   raised in \cite{DJ} as to  whether certain nonreflexive Musielak-Orlicz sequence spaces $\ell^{p_n}$, with $\liminf p_n = 1$, have  the FPP. We also construct large classes of nonreflexive variable Lebesgue spaces having the FPP and containing infinitely many different $L^p([0,1])$ for $1<p<\infty$. To the best of our knowledge, our results provide the first examples of  nonreflexive Banach \textit{function} spaces with the FPP. Finally, we  note  that our abstract approach applies to a more general class of operators, including   mappings of type (C), introduced  by Suzuki \cite{Suz}, and, more generally,  mappings of $(L)$-type, introduced in \cite{LLMo}. In particular, all the  results in \cite{MoLL} concerning existence  of fixed point for mappings of $(L)$-type in nonreflexive spaces follow from Theorem \ref{main}.

\medskip

The abstract criterion developed in the article is formulated solely in terms of projection schemes and is independent of any specific Banach space structure. Consequently, once a Banach space admits a projection scheme satisfying our assumptions, the fixed point property follows automatically. We expect this framework to have further applications beyond the examples considered in the present paper.

\section{ Preliminaries: known results on fixed point theory and introduction of variable Lebesgue spaces  }\label{pre}

We begin by recalling some standard facts concerning the fixed point pro\-perty that will be used throughout the paper. For detailed information, see  \cite{GK, Kirk-Sims} and the references therein.

Let  $X$ be a  Banach space. We denote  by $B_X$ and $S_X$ the closed unit ball and the  unit sphere of $X$ respectively.

\medskip

\subsection{Fixed point preliminaries}

Let $X$  be a Banach space and $C$ be  a closed convex bounded subset of $X$ and   $T:C\to C$  a nonexpansive mapping, that is $\V Tx-Ty\V\le \V x-y\V$ for all $x,y\in C$.  Although $T $ needs not have a fixed point, one always has $\inf\{\V x-Tx\V: x\in C\}=0$.
A sequence $(x_n)\subset C$ verifying  $\lim_n\V T(x_n)-x_n\V=0$ is called an approximate fixed point sequence for $T$, a.f.p.s.   in short.

\medskip

We recall that a Banach space $X$ is said to be uniformly convex (UC) if for every $\epsilon>0$, there exists some $\delta>0$ such that if $\V x\V,\V y\V\le1$ and $\V x-y\V>\epsilon$, then
$$
\left\V \frac{x+y}{ 2}\right\V<1-\delta.
$$
 We next  recall the concept of  an asymptotic center.
Let $C$ be a convex, bounded, closed subset of $X$, $(x_n)\subset C$
and denote
$
r(\{x_n\},C):=\inf\{\limsup_n\V x_n-z\V: z\in C\}
$, the asymptotic radius of $(x_n)$ with respect to $C$. The set
$$
Z(\{x_n\},C):=\{z\in C: \limsup_n\V x_n-z\V=r(\{x_n\},C)\}
$$ is the asymptotic center of  $(x_n)$ with respect to $C$. Note that the asymptotic center may be empty  (see for instance \cite{KP}).

For every $\delta>0$, the set
$
Z(\{x_n\},C,\delta):=\{z\in C: \limsup_n\V x_n-z\V\le r(\{x_n\};C)+\delta\}
$
is always a nonempty, closed, convex subset of $C$ and $Z(\{x_n\},C)=\bigcap_{\delta>0}Z(\{x_n\},C,\delta)$. We call $Z(\{x_n\},C,\delta)$  the $\delta$-approximate asymptotic center of $(x_n)$ with respect to $C$. In particular, asymptotic centers are nonempty when $C$ is additionally weakly compact.
If $X$ is uniformly convex, then
$$\lim_{\delta\to 0^+} {\rm diam}\left[Z(\{x_n\},C,\delta)\right]=0,$$
which clearly shows that $Z(\{x_n\},C)$ is a singleton.  When the sequence $(x_n)$ is chosen to be an a.f.p.s. for  a nonexpansive mapping $T$, then $Z(\{x_n\},C)$ is $T$-invariant. If, in addition, the asymptotic center is a singleton $\{x\}$, then  $x\in C$ becomes a fixed point for $T$.

\subsection{ Nonreflexive Banach spaces with the FPP}

Whether every reflexive Banach space has the FPP is a long-standing open question dating back to the 1960s. In contrast, the first example of a nonreflexive Banach space with the FPP was obtained by P. K. Lin in 2008 \cite{Lin-2008}. He constructed an equivalent norm on $\ell_1$  under which  the FPP holds. This norm can be described as follows. Let $(\gamma_n)\subset (0,1)$ be a sequence with $\lim_n \gamma_n=1$.
Define
\begin{equation}\label{Linnorm}
\ve x\ve =\sup_n\gamma_n\sum_{k=n}^\infty |t_k|\quad \mbox{for } x=\sum_{n=1}^\infty t_n e_n.
\end{equation}

Since P.K. Lin's result, some other nonreflexive  Banach spaces have been discovered with the FPP (see for instance \cite{alfredo, todos, japon2, Lin-2010} or \cite[Section 4]{DJ}), but all of them are sequence spaces endowed with a Schauder basis for which the canonical projections  have some suitable properties.  In particular, they have
 the Schur property, which implies that the  spaces considered in these references are hereditarily $\ell_1$ \cite{alfredo}.

\medskip

If we focus on Lebesgue spaces $L^p(\Omega)$, which are uniformly convex when $1<p<+\infty$, it is deduced that  an infinite-dimensional $L^p(\Omega)$ has the FPP if and only if $p\ne 1,+\infty$ or, equivalently, $L^p(\Omega)$ has the FPP if and only if it does not contain $\ell_1$ isometrically.

\subsection{Variable Lebesgue spaces}
 As it was stated in the Introduction, one of the main goals of this article is to obtain a complete characterization of those variable Lebesgue spaces $L^{p(\cdot)}(\Omega)$ verifying the FPP.
We next briefly introduce these spaces which are a particular family of Orlicz modular spaces and which cover  Lebesgue spaces $L^p(\Omega)$ when the exponent function is constant.

Let
 $(\Omega,\Sigma,\mu)$ be a $\sigma$-finite measure space and $p:\Omega \to [1,+\infty]$ a measurable function. Denote $L_0(\Omega)$ the set formed by all measurable functions $f: \Omega\to \mathbb{R}$,
$\Omega_1:=\{t\in \Omega: p(t)=1\}$ and $\Omega_\infty:=\{t\in \Omega: p(t)=+\infty\}$.
Consider the function $\rho:L_0(\Omega)\to [0,\infty]$  defined by

\begin{equation}\label{mod}
\rho(f):=\int_{\Omega\setminus \Omega_\infty}|f(t)|^{p(t)} d\mu+  \, \text{ess sup} \{|f(t)|:t\in \Omega_\infty\}.
\end{equation}

 \medskip

For every $f,g\in L_0(\Omega)$, the function $\rho$ satisfies the following properties:

\medskip

$ (m1)$ $\rho(f)=0$ if and only if $f=0$;\\

$(m2)$ $ \rho(\alpha g)=\rho(g) $ for every scalar $ \alpha $ with $ |\alpha|=1 $;\\

$(m3)$  $ \rho(\alpha f+\beta g)\leq \alpha\rho(f)+\beta\rho(g) $ if $ \alpha+\beta=1 $ and $ \alpha, \beta \geq 0 $.

\medskip

A function $\rho$ satisfying properties $(m1)-(m3)$ is called a convex modular. The modular $ \rho $ defines the corresponding modular space, i.e. the vector space formed by all measurable functions $f: \Omega\to \mathbb{R}$ such that
$ \rho(f/\lambda)<\infty  \text{ for some  } \lambda >0. $

\medskip

The  Lebesgue space with variable exponent  $L^{p(\cdot)}(\Omega)$  (VLS in short) is defined as the above  modular   space  endowed with the Luxemburg norm associated to the modular $\rho$,  i.e.
\begin{align*}
\|f\| = \inf\left\{\alpha>0: \rho\left(\frac{f}{\alpha}\right)\leq 1\right\}.
\end{align*}

 Variable Lebesgue spaces form a class of Banach function spaces introduced by Nakano \cite{N}  and later developed by Orlicz and Musielak  \cite{MO}. Detailed information on these spaces can be found, for instance, in \cite{cruz,DHHR,Ru}.

It is well-known that $L^{p(\cdot)}(\Omega)$  is a  Banach function lattice whose    geometry  is strongly attached to the behaviour of the  exponent  function $p(\cdot)$.

We will follow the following standard notation: For any measurable set $E\in \Sigma$:
$$
 p_-(E)=\, \text{ess inf}_{t\in E }\, p(t), \quad p_+(E)=\, \text{ess sup}_{t\in E}\, p(t).
 $$
If $E=\Omega$ we just denote $p_-:=p_-(\Omega)$ and $p_+:=p_+(\Omega)$.

\medskip

We shall repeatedly use in Section 6, the following well-known properties \cite{cruz}.

\begin{lemma}\label{properties}   Let $(\Omega,\Sigma,\mu)$ be a $\sigma$-finite measure and  $p:\Omega\to [1,+\infty]$  an exponent function
 such that $p_+(\Omega\setminus\Omega_\infty)<\infty$.  Then, the following properties are all satisfied  for every $f\in L^{p(\cdot)}(\Omega)$:

(1)  $\rho(f/\Vert f\Vert)=1$. In particular $\rho(f)\leq 1$ if and only if $\Vert f\Vert\leq 1$

(2) The convergence in modulus for $\rho(\cdot)$ is equivalent to the norm convergence.





\end{lemma}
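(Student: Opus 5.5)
The plan is to first establish continuity and strict monotonicity of the function $\lambda\mapsto \rho(f/\lambda)$ on $(0,\infty)$ for a fixed nonzero $f\in L^{p(\cdot)}(\Omega)$, and then derive (1) and (2) from it. Throughout, write $p^+:=p_+(\Omega\setminus\Omega_\infty)<\infty$ and split the modular as
\[
\rho(f)=\rho_0(f)+\rho_\infty(f),\qquad \rho_0(f)=\int_{\Omega\setminus\Omega_\infty}|f|^{p(t)}\,d\mu,\qquad \rho_\infty(f)=\operatorname{ess\,sup}_{\Omega_\infty}|f|.
\]

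The key step is a two-sided power estimate. For $0<\lambda\le 1$ one has $\lambda^{p^+}\rho_0(f)\le \rho_0(\lambda f)\le \lambda\rho_0(f)$, since $1\le p(t)\le p^+$ on $\Omega\setminus\Omega_\infty$. For $\lambda\ge1$ the inequalities are reversed: $\lambda\rho_0(f)\le\rho_0(\lambda f)\le \lambda^{p^+}\rho_0(f)$. The term $\rho_\infty$ is simply homogeneous. These bounds give three things at once.
\begin{enumerate}
\item If $\rho(f/\lambda_0)<\infty$ for some $\lambda_0$, then $\rho(f/\lambda)<\infty$ for every $\lambda>0$.
\item The map $\lambda\mapsto\rho(f/\lambda)$ is continuous. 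For continuity at $\lambda_1$, write $f/\lambda=(\lambda_1/\lambda)(f/\lambda_1)$ and apply the estimates with factor $\lambda_1/\lambda\to1$.
\item The map is strictly decreasing. If $\lambda<\lambda'$, then $\rho(f/\lambda')\le (\lambda/\lambda')\rho(f/\lambda)<\rho(f/\lambda)$ whenever $0<\rho(f/\lambda)<\infty$, and $\rho(f/\lambda)>0$ by (m1) since $f\neq0$.
\end{enumerate}
I expect this step to be the main obstacle, because finiteness for all $\lambda$ is exactly where the hypothesis $p^+<\infty$ is needed.

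The limits at the ends of $(0,\infty)$ follow from the same estimates: $\rho(f/\lambda)\to\infty$ as $\lambda\to0^+$ and $\rho(f/\lambda)\to0$ as $\lambda\to\infty$. By the intermediate value theorem there is a unique $\lambda$ with $\rho(f/\lambda)=1$, and by monotonicity this $\lambda$ equals the infimum defining $\|f\|$. This proves $\rho(f/\|f\|)=1$. Then $\rho(f)\le 1$ holds iff $1\ge\|f\|$, again by strict monotonicity applied at $\lambda=1$.

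For (2), the claim is that $\rho(f_n-f)\to0$ iff $\|f_n-f\|\to0$; take $g_n=f_n-f$. Suppose $\|g_n\|=\varepsilon_n\to0$. When $\varepsilon_n\le1$, the estimates give $\rho(g_n)=\rho(\varepsilon_n\cdot g_n/\varepsilon_n)\le\varepsilon_n\rho(g_n/\varepsilon_n)=\varepsilon_n$. Conversely, suppose $\rho(g_n)\to0$ and fix $\varepsilon\in(0,1)$. Then $\rho(g_n/\varepsilon)\le\varepsilon^{-p^+}\rho(g_n)\le1$ for $n$ large, so $\|g_n\|\le\varepsilon$ by (1). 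The $\rho_\infty$ part is homogeneous and fits inside both estimates, so it needs no separate treatment.
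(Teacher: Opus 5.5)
Your proof is correct and complete. The two-sided bounds $\lambda^{p^+}\rho(f)\le\rho(\lambda f)\le\lambda\rho(f)$ for $0<\lambda\le 1$ (reversed for $\lambda\ge 1$) give finiteness, continuity, strict monotonicity and the end behaviour of $\lambda\mapsto\rho(f/\lambda)$. They also absorb the $1$-homogeneous term on $\Omega_\infty$, because you used exponent $1$ rather than $p_-$ as the lower bound. From these, (1) for $f\neq 0$ and both directions of (2) follow exactly as you describe.

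The paper does not prove this lemma; it only cites \cite{cruz}. Your argument is the standard one behind those facts, and it has the advantage of being self-contained and showing precisely where $p_+(\Omega\setminus\Omega_\infty)<\infty$ is needed.
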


We will also use the following  convexity result \cite{TDB}:


\begin{lemma}\label{ucc2}
 Assume $1<p_-\leq p_+<\infty$. For  $2\geq \delta >0$ there exists $\alpha >0$ depending on $\delta, p^+$ and $p_-$ such that for any $u,v\in  L^{p(\cdot)}(\Omega)$ satisfying $\Vert u\Vert \leq 1$,  $\Vert v\Vert \leq 1$, $\Vert u-v\Vert \geq \delta$ we have
$$ \frac{\rho( u) +\rho( v)} {2} \geq  \rho \left( \frac {u+v}{2}\right) +\alpha.$$\end{lemma}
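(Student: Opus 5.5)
The plan is to argue by contradiction, applying a uniform convexity estimate to a suitable normalization. Lemma \ref{ucc2} concerns variable Lebesgue spaces with $1<p_-\le p_+<\infty$. It asserts a uniform modular convexity: midpoints lose a fixed amount $\alpha$ of modular whenever $u,v$ lie in the unit ball and are $\delta$-apart.

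Suppose the lemma fails for some $\delta\in(0,2]$. Then there are sequences $u_n,v_n$ in the unit ball with $\Vert u_n-v_n\Vert\ge\delta$ and
$$\frac{\rho(u_n)+\rho(v_n)}{2}-\rho\Big(\frac{u_n+v_n}{2}\Big)\to 0.$$
Pointwise, $\Phi_t(s)=|s|^{p(t)}$ is convex. The gap
$$D(t)=\tfrac12\big(|u|^{p}+|v|^{p}\big)-\Big|\tfrac{u+v}2\Big|^{p}$$
is nonnegative, and $\int D\to0$.

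The key pointwise inequality I would use is the standard Clarkson-type estimate. For $p\in[p_-,p_+]\subset(1,\infty)$ and $a,b\in\mathbb R$, one has
$$\tfrac12(|a|^p+|b|^p)-\Big|\tfrac{a+b}2\Big|^p\ \ge\ c\,\Big(\tfrac12(|a|^p+|b|^p)\Big)\,\phi\!\left(\frac{|a-b|}{|a|+|b|}\right),$$
with $c>0$ and $\phi$ increasing, $\phi(0)=0$, both depending only on $p_\pm$. This follows by homogeneity and compactness of $[p_-,p_+]\times\{|a|+|b|=1\}$, together with strict convexity of $|s|^p$.

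Fix $\eta>0$ and split $\Omega$ into two sets. On $A=\{|u-v|\le \eta(|u|+|v|)\}$ we have
$$\rho\big((u-v)\chi_A\big)\le \eta^{p_-}2^{p_+}\cdot\tfrac12\int(|u|^p+|v|^p)\le 2^{p_+}\eta^{p_-}.$$
On the complement, $\rho\big((u-v)\chi_{A^c}\big)\le 2^{p_+}\int_{A^c}\tfrac12(|u|^p+|v|^p)$. By the pointwise inequality this is at most $2^{p_+}\big(c\phi(\eta)\big)^{-1}\int D$, which tends to $0$. Hence $\limsup\rho(u_n-v_n)$ is as small as we like. By Lemma \ref{properties}(2), modular convergence is equivalent to norm convergence, so $\Vert u_n-v_n\Vert\to0$, contradicting $\Vert u_n-v_n\Vert\ge\delta$.

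Alternatively, to get $\alpha$ depending explicitly only on $\delta,p_\pm$, I would make this quantitative. Lemma \ref{properties}(1) says $\Vert f\Vert\ge\delta$ implies $\rho(f)\ge\delta^{p_+}$ when $\delta\le1$ (and $\ge\delta^{p_-}$ otherwise), since $\rho(f)\ge \min(\Vert f\Vert^{p_-},\Vert f\Vert^{p_+})$. So choose $\eta$ with $2^{p_+}\eta^{p_-}\le\frac12\min(\delta^{p_+},\delta^{p_-})$. Then set $\alpha=\tfrac12 c\phi(\eta)2^{-p_+}\min(\delta^{p_+},\delta^{p_-})$; this depends only on $\delta,p_\pm$.

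The main obstacle is the uniform pointwise inequality with constants independent of $t$, i.e.\ uniform in the exponent over $[p_-,p_+]$. I would obtain it by the compactness and continuity argument in $(p,a,b)$ after normalizing $|a|+|b|=1$.
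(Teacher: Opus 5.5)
The paper gives no proof of Lemma \ref{ucc2}. It quotes the lemma from \cite{TDB} and uses it only as a black box in Lemma \ref{kUC2}, so your argument has to stand on its own. It does: the proof is correct and complete.

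\textbf{The pointwise estimate.} Consider the ratio $\big(\tfrac12(|a|^p+|b|^p)-|\tfrac{a+b}2|^p\big)/\tfrac12(|a|^p+|b|^p)$ on the compact set $[p_-,p_+]\times\{(a,b):|a|+|b|=1,\ |a-b|\ge\eta\}$.
\begin{itemize}
\item It is continuous there, since the denominator is at least $2^{-p_+}$.
\item It is strictly positive, by strict convexity of $|s|^p$ for $p>1$.
\end{itemize}
Hence its minimum $\phi(\eta)$ is positive and nondecreasing in $\eta$. Positivity on $(0,1]$ is all you need, not strict monotonicity. Homogeneity of degree $p(t)$ then transfers the bound to all $(a,b)$.

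\textbf{The splitting.} Splitting along $A=\{|u-v|\le\eta(|u|+|v|)\}$ gives
\[
\rho(u-v)\le 2^{p_+}\eta^{p_-}+\frac{2^{p_+}}{\phi(\eta)}\Big(\frac{\rho(u)+\rho(v)}{2}-\rho\Big(\frac{u+v}{2}\Big)\Big).
\]
This uses only $\rho(u),\rho(v)\le 1$ and $\eta\le 1$. Your choice of $\eta$ forces $\eta\le1$: since $\delta\le 2$, we have $\min(\delta^{p_-},\delta^{p_+})\le 2^{p_+}$, so $\eta^{p_-}\le \tfrac12$. On the other side, $\rho(f/\Vert f\Vert)=1$ from Lemma \ref{properties} gives $\rho(u-v)\ge\min(\delta^{p_-},\delta^{p_+})$, which closes the argument with your explicit $\alpha$.

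\textbf{Which version to keep.} Keep the quantitative version; the contradiction version is then unnecessary. The quantitative version shows directly that $\alpha$ depends only on $(\delta,p_-,p_+)$. That is exactly what Lemma \ref{kUC2} needs, since it applies the estimate to $\rho_n$ on $\{\gamma_n\le p<\infty\}$ with lower exponent $\gamma_n$.

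\textbf{Comparison with citing.} Your route is self-contained where the paper relies on the literature. Its only weakness is that the compactness step gives no closed form for $\phi$. Explicit constants could instead come from elementary Clarkson-type inequalities for $|s|^p$, but nothing in the paper requires them.
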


When  $\Omega$ is the set $\mathbb{N}$ of positive integers (so  $p:\mathbb{N}\to [1,\infty]$ is a sequence $\{p_n\}$) with the counting measure the space $L^{p(\cdot)}(\Omega)$ becomes   the Nakano space $\ell^{p_n}$.

\section{Projection schemes: definition and first properties}\label{PSsection}

In this section we introduce the concept of a projection scheme, inspired by the idea of a projectional resolution of the identity and designed to capture some of the properties of the canonical projections associated with  the Schauder basis,  used  by P.K. Lin in \cite{Lin-2008} for  $\ell_1$ and more generally in   \cite{todos}. We denote by $I:X\to X$ the identity mapping on a Banach space $X$.

\begin{definition}\label{PS} Let $(X,\V\cdot\V)$ be a Banach space.
Given a sequence of bounded linear projections $\mathcal{P}=\{P_n:X\to X: 1\le n\le \omega\}$,
we say that   $\mathcal{P}$ is a \textit{ Projection Scheme (PS)}   on $X$ if the following  conditions are satisfied:

\begin{itemize}

\item[a)]  For all $1\le n,m\le \omega$, $P_n\circ P_m=P_{\min\{n,m\}}$;

\item[b)] For every $x\in X$,  $\lim_n \V P_n(x)-P_\omega (x)\V=0$.

\item[c)] $I-P_\omega$ has finite rank.


\end{itemize}

\end{definition}

When $P_\omega=I$,
 condition $c)$ can be omitted. The  family $\mathcal{P}$ formed by $P_n=I$ for all $1\le n\le \omega$  is trivially
 a projection scheme on every Banach space.

Given a Banach space $X$ and a PS $\mathcal{P}=\{P_n: 1\le n\le \omega\}$ on $X$, the family $\{P_n: 1\le n\le \omega\}$ is uniformly bounded by the Banach-Steinhaus theorem.   We  denote  $K(\mathcal{P})=\sup_n\V P_n\V$  (and automatically  $\V P_\omega\V\le K(\mathcal{P})$). We  say that the projection scheme  $\mathcal{P}$ is monotone if $K(\mathcal{P})=1$. We say that $\mathcal{P}$  is premonotone if $\Vert I-P_n\Vert \le 1$ for every $n\in \mathbb{N}$ .


\begin{example}\label{Sch}

Let  $X$ be a Banach space endowed with a Schauder basis $(e_n)_n$.  The family $\mathcal{P}$ consisting of the canonical projections $P_n:X\to X$, defined   by $P_n(\sum_{k=1}^\infty x_k e_k)=\sum_{k=1}^n x_k e_k$  together with  $P_\omega=I$  is a projection scheme on $X$. \end{example}

\begin{example}\label{VLSPS}

Let $(\Omega,\Sigma,\mu)$ be a  $\sigma$-finite measure space.  Let $p: \Omega\to (1,+\infty)$ be a measurable function such that  $1=p_-(\Omega)<p_+(\Omega)<\infty$. Fix a nonincreasing sequence $(\gamma_n)\subset (1,+\infty)$  with $\lim_n\gamma_n=1$. For $n\in\mathbb{N}$, define $P_n: L^{p(\cdot)}(\Omega)\to L^{p(\cdot)}(\Omega)$ by $P_n(f)=f\cdot \chi_{p^{-1}(\gamma_n,\infty)}$ and $P_\omega=I$.  Then $\mathcal{P}=\{P_n: 1\le n\le \omega\}$ is a PS on $L^{p(\cdot)}(\Omega)$. More details, together with a slight modification of this construction, will be given in Section \ref{VLSsection}).

\end{example}

We now introduce the concept of $\mathcal{P}$-convergence on a Banach space $X$ equipped  with projection scheme $\mathcal{P}$.

\begin{definition}\label{Pconvergence} Let $\mathcal{P}=\{ P_k: 1\le k\le \omega\}$ be a projection scheme on  $X$.  We say that a  sequence $(x_n)_n$ in $X$ is $\mathcal{P}$-convergent to some $x\in X$, denoted by $\mathcal{P}-\lim_n x_n=x$,  if $\lim_n P_k(x_n)=P_k(x)$ for all $k\in\mathbb{N}$ and $\lim_n (I-P_\omega)(x_n)=(I-P_\omega)(x)$.
\end{definition}

The notion of  $\mathcal{P}$-convergence coincides with  convergence in the initial topology  on $X$ induced by the family of  mappings $\{P_n: n\in\mathbb{N}\}$ and $I-P_\omega$. Since these mappings are  linear and bounded, this  initial topology is generated by a  countable family of seminorms  which can be defined by $p_n(x):=\Vert P_n(x)\Vert $ for $n\in\mathbb{N}$ and $q(x):=\V(I-P_\omega)(x)\V$. The fact that every $x\in X$ can be written as $x=P_\omega x+(I-P_\omega)(x)$ shows that this family of seminorms separates points of $X$ and they generate a metrizable locally convex topological vector space whose topology is weaker than the norm topology.

When $P_n=I$ for all $n\in\mathbb{N}\cup\{\omega\}$,  $\mathcal{P}$-convergence coincides with  norm convergence. When $X$ is a  Banach space with a Schauder basis $(e_n)$ and  $\mathcal{P}$ is the PS described in Example \ref{Sch},  the  $\mathcal{P}$-convergence   is precisely  coordinatewise convergence.
For the PS described for $L^{p(\cdot)}(\Omega)$ in Example \ref{VLSPS}, it is not difficult to check that the topology generated by $\mathcal{P}$ is stronger than the topology of the convergence locally in measure.

\medskip
We conclude this section with the notion of a boundedly complete projection scheme, which is intended to capture the existence of a limit for bounded sequences, as in the case of a boundedly complete Schauder basis.

\begin{definition}\label{PS2}
Let $\mathcal{P}=\{P_n: 1\le n\le \omega\}$ be a   projection scheme on $X$.
We  say that $\mathcal{P}$  is  boundedly complete if for every  bounded sequence $(x_n)$  such that  $P_n(x_m)=x_n$ if $n\le m<\omega$, there is  a vector  $x\in P_\omega(X)$ with $P_n(x)=x_n$ for all $n\in\mathbb{N}$.

\end{definition}

\section{ $\mathcal P$-almost uniform convexity and asymptotic compactness of approximate asymptotic centers}

In this section,
we will  circumvent the possible emptiness of  asymptotic centers   by introducing the notion of $\mathcal{P}$-uniform convexity and more generally $\mathcal{P}$-almost uniform convexity. 

Firstly we extend the definition of uniform convexity when  the considered directions depend on a given operator.

\begin{definition}\label{kUC}
Let $A:X\to X$ be a  bounded operator. We say that $X$ is $A$-uniformly convex ($A$-UC) if for all $\epsilon>0$, there exists some $\gamma(A)>0$ such that for all $x,y\in B_X$  either
$$
\V A(x-y)\V<\epsilon \mbox{ or }\left\V {x+y\over 2}\right\V<1-\gamma(A).$$

If $\mathcal{A}=\{A_n: n\in\mathbb{N}\}$ is a family of linear bounded operators from $X$ into $X$, we say that $X$ is $\mathcal{A}$-UC if $X$ is $A_n$-UC for all $A_n\in\mathcal{A}$.

\end{definition} 
When $A=I$ the identity operator,   the $A$-UC definition simply reduces  to the definition of uniform convexity. However, our notion covers more general   structures  as the following example shows.

\begin{example}\label{puc}
Let $X=Y\oplus_2 Z$ where $Y$ is uniformly convex, $Z$ is any Banach space and the norm on $X$  is given by $\V x\V_X:=\left(\V y\V_Y^2+\V z\V_Z^2\right)^{1\over 2}$ if $x=y+z$ with $y\in Y$ and $z\in Z$.
Define $A:X\to X$ given by $A(x)=y$ if $x=y+z$. Then $X$ is $A$-UC.

\medskip

Indeed, fix some $\epsilon>0$ and $x_1,x_2\in B_X$ with $x_i=y_i+z_i$, $y_i\in Y$, $z_i\in Z$, $i=1,2$ and assume that $\V y_1-y_2\V\ge \epsilon$. Using that $Y$ is UC, there exists some $\delta_Y(\epsilon)$ such that \cite[Proposition 1, page 190]{Bea}:
$$
\left\V{y_1+y_2\over 2}\right\V_Y^2\le {\V y_1\V^2+\V y_2\V_Y^2\over 2}-\delta_Y(\epsilon).
$$
By the convexity of the function $\V \cdot\V^2$, when $\V\cdot\V$ is any norm, we have
$$
\begin{array}{lll}
\left\V {x_1+x_2\over 2}\right\V_X^2&\le& {\V y_1\V_Y^2+\V y_2\V_Y^2\over 2}-\delta_Y(\epsilon)+ {\V z_1\V_Z^2+\V z_2\V_Z^2\over 2}\\
  &= & {\V x_1\V^2_X+\V x_2\V_X^2\over 2}-\delta_Y(\epsilon).\\
  \end{array}
$$
Taking square root and adjusting the coefficients we obtain  that Definition \ref{kUC} is satisfied.
\end{example}

\medskip

In Section \ref{VLSsection}, we will be dealing with variable Lebesgue spaces for which finitely many atoms may appear in the extreme sets $p^{-1}(\{1\})$ and $p^{-1}(\{\infty\})$. In order to cover this case as well as Banach spaces endowed with a Schauder basis,  we extend Definition \ref{kUC} still preserving the relevant notions of convergence  that we expect to obtain, as it will be exposed in Theorem \ref{diam}.

\begin{definition}\label{AUC}
Let $X$ be a Banach space endowed with a projection scheme  $\mathcal{P}=\{P_n: 1\le n\le \omega\}$.  We say that $X$ is $\mathcal{P}$-almost uniformly convex ($\mathcal{P}$-AUC) if for every $n\in\mathbb{N}$, the projection $P_n$ can be split into the sum of two projections $P_n=A(P_n)+B(P_n)$,
 where $X$ is  $\mathcal{A}$-UC for $\mathcal{A}=\{A(P_n): n\in\mathbb{N}\}$  and rank of $B(P_n)$ is finite for every $n\in \mathbb{N}$.

\end{definition}

If $X$ has a Schauder basis, $X$ is trivially $\mathcal{P}$-AUC for the projection scheme associated to the canonical projections,  taking $A(P_n)=0$ for all $n\in\mathbb{N}$ (see Example \ref{Sch}).
Additionally, if $Z$ is finite dimensional in Example \ref{puc}, $X$ is $\mathcal{P}$-AUC, where $\mathcal{P}=\{I\}$,  the projection scheme just formed by the identity  operator,  since $I=A+B$ where $B$ is the identity on $Z$.

\medskip

We next introduce the notion of asymptotic compactness for a nonincreasing sequence of sets associated to a given  metrizable topology.

\begin{definition}\label{ac} Let $X$ be a Banach space endowed with a metrizable topo\-logy $\tau$. We say that a nonincreasing  sequence  $\{Z_s: s\in\mathbb{N}\}$ of closed bounded subsets in $X$ is $\tau$-asymptotically compact if for every sequence $(z_s)\subset X$ with $z_s\in Z_s$ for $s\in\mathbb{N}$,  there exists a $\tau$-convergent subsequence.

\end{definition}

When $\tau$ denotes the norm topology in a Banach space, the asymptotic compactness can be characterized by using the Kuratowski measure of noncompactness, which is defined for a bounded set $Z\subset X$ as
$$\alpha(Z):=\inf \{ \epsilon>0: Z  \mbox{ is covered
 by finitely many sets with diameter $\le \epsilon$}\}.$$
  It is well known that  $Z$ is relatively norm-compact if and only if $\alpha(Z)=0$ and it is not difficult to check that a nonincreasing  sequence  $\{Z_s: s\in\mathbb{N}\}$ of closed bounded subsets in $X$ is asymptotically compact for the norm topology  if and only if $\lim_s \alpha(Z_s)=0$.

\medskip

In case that $X$ is a Banach space equipped with a projection scheme $\mathcal{P}$, we will use the notation $\mathcal{P}$-asymptotic compactness when the given topology $\tau$ is the locally convex topology defined by the projections in $\mathcal{P}$ (see Section 3).
The following lemma provides us with an equivalent definition of $\mathcal{P}$-asymp\-totic compactness when $\mathcal{P}$ is a boundedly complete projection scheme. Somehow, it  resembles the one for the norm topology by using the Kuratowski measure of noncompactness.

\begin{lemma}\label{Pasymp}
Let $X$ be a Banach space endowed with a projection scheme $\mathcal{P}=\{P_n: n\in\mathbb{N}\cup\{\omega\}\}$ which is boundedly complete. 
 Assume that $(Z_s)$ is a sequence of closed bounded subsets of $X$ Then $(Z_s)$ is $\mathcal{P}$-asymptotically compact if and only if $\lim_s \alpha(P_n(Z_s))=0$ for all $n\in\mathbb{N}$.

\end{lemma}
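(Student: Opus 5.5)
We prove the two implications separately. Throughout, as in Definition \ref{ac}, the sequence $(Z_s)$ is nonincreasing. Hence for each fixed $n$ the sets $P_n(Z_s)$ are nonincreasing in $s$, and so is $s\mapsto \alpha(P_n(Z_s))$. Bounded completeness is needed only for the converse implication.

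\emph{Necessity.} Assume $(Z_s)$ is $\mathcal{P}$-asymptotically compact and, for contradiction, that $\lim_s\alpha(P_n(Z_s))>0$ for some $n$. By monotonicity there is $\epsilon>0$ with $\alpha(P_n(Z_s))\ge \epsilon$ for all $s$. We build $z_s\in Z_s$ inductively so that $\V P_n z_s-P_n z_j\V\ge \epsilon/4$ for all $j<s$. This is possible: if no such $z_s$ existed, $P_n(Z_s)$ would be covered by the finitely many balls of radius $\epsilon/4$ centred at $P_nz_1,\dots,P_nz_{s-1}$. These balls have diameter at most $\epsilon/2$, so $\alpha(P_n(Z_s))\le\epsilon/2<\epsilon$, a contradiction. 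By hypothesis some subsequence of $(z_s)$ is $\mathcal{P}$-convergent. In particular its image under $P_n$ is norm convergent, hence Cauchy, which is impossible for a sequence whose terms are $\epsilon/4$-separated.

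\emph{Sufficiency.} Assume $\lim_s\alpha(P_n(Z_s))=0$ for every $n$, and let $z_s\in Z_s$. Since the sets are bounded, $\V z_s\V\le M$ for some $M$ (we take bounded to mean that the sets are contained in a common bounded set, e.g.\ $Z_1$).

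\begin{itemize}
\item For fixed $n$ and any $t$, the set $\{P_nz_s:s\in\mathbb{N}\}$ is a finite set together with a subset of $P_n(Z_t)$. Therefore its Kuratowski measure is at most $\alpha(P_n(Z_t))$ for every $t$, so it is $0$ and the set is relatively norm compact.
\item The vectors $(I-P_\omega)z_s$ form a bounded subset of the finite-dimensional range of $I-P_\omega$ (condition c)), so they are also relatively compact.
\item A diagonal extraction gives a subsequence, still written $(z_s)$, with $P_nz_s\to y_n$ in norm for every $n\in\mathbb{N}$ and $(I-P_\omega)z_s\to w$.
\end{itemize}

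It remains to produce the limit point, which is the main step and the only one using bounded completeness. By condition a) and the continuity of $P_n$, we get $P_ny_m=\lim_sP_nP_mz_s=\lim_s P_nz_s=y_n$ for $n\le m$. Moreover $\V y_n\V\le K(\mathcal{P})M$, so $(y_n)$ is bounded. Bounded completeness yields $y\in P_\omega(X)$ with $P_ny=y_n$ for all $n$.

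Put $x=y+w$. Since $w$ lies in the range of $I-P_\omega$, condition a) gives $P_kw=P_k(I-P_\omega)w=(P_k-P_k)w=0$. Hence $P_kx=y_k=\lim_sP_kz_s$ for every $k\in\mathbb{N}$. Also $(I-P_\omega)x=(I-P_\omega)y+(I-P_\omega)w=0+w=\lim_s(I-P_\omega)z_s$, using $y\in P_\omega(X)$ and that $I-P_\omega$ is a projection. Thus $\mathcal{P}-\lim_s z_s=x$, and $(Z_s)$ is $\mathcal{P}$-asymptotically compact.
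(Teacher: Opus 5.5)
Your proof is correct and follows the same route as the paper: show each $\{P_n z_s\}_s$ is relatively norm compact, extract a diagonal subsequence along which $P_n z_s$ and the finite-rank part $(I-P_\omega)z_s$ converge, and use bounded completeness to assemble the $\mathcal{P}$-limit $y+w$. The differences are minor. You get $\alpha(\{P_nz_s:s\in\mathbb{N}\})=0$ directly from $\{P_nz_s:s\ge t\}\subset P_n(Z_t)$, whereas the paper first passes to a subsequence with $\alpha(P_k(Z_{s_k}))<\epsilon_k$ and then uses $P_n=P_nP_j$ and $\V P_n\V\le K(\mathcal{P})$. You also write out the necessity direction and the check $P_kw=0$, which the paper leaves to the reader.
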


\begin{proof}

When $(Z_s)$ is $\mathcal{P}$-asymptotically compact, it can  be easily checked that  $\lim_s\alpha(P_n(Z_s))=0$ para todo $n\in\mathbb{N}$ (without the requirement of being boundedly complete).

\medskip

Let us prove the converse: 
Let $(z_s)$ be a sequence in $X$, with $z_s\in Z_s$ for every $s\in\mathbb{N}$. Since $(I-P_\omega)$ has finite rank, we can assume, without loss of generality,  that there exists some $v_0\in (I-P_\omega)(X)$ such that $\lim_s (I-P_\omega)(z_s)=v_0$

Let $(\epsilon_k)$ be a nonnegative null sequence. Using a diagonal argument, we can construct a subsequence $(s_k)_k$ such that
$$
\alpha\left(P_k(Z_{s_k})\right)<\epsilon_k
$$
for all $k\in\mathbb{N}$. We prove that $\alpha(\{P_n(z_{s_k}):k\in\mathbb{N}\})=0$ for all $n\in\mathbb{N}$. Indeed, fix $n\in\mathbb{N}$. Take  $j>n$.
Then
$
\alpha(\{P_n(z_{s_k}):k\in\mathbb{N}\})=\alpha(\{P_n(z_{s_k}): s_k\ge s_j\})$. For $k,k^\prime$ with $s_k, s_{k^\prime}\ge s_j$ we have:
$$
\begin{array}{lll}
\V P_n(z_{s_k})-P_n(z_{s_{k^\prime}})\V & = & \V P_nP_j(z_{s_k})-P_nP_j(z_{s_{k^\prime}})\V\\
 & \le & {K}(\mathcal{P})\V P_j(z_{s_k})-P_j(z_{s_{k^\prime}})\V.
 \end{array}
$$
Therefore:
$$
\begin{array}{lll}
\alpha(\{P_n(z_{s_k}): s_k\ge s_j\})& \le& {K}(\mathcal{P})\alpha\left(\{P_{j}(z_{s_k}): s_k\ge s_j\}\right)\\
 & \le &  \mathcal{K}(\mathcal{P} )\alpha(P_j(Z_{s_{j}}))\le \epsilon_j.\\
\end{array}
$$
Consequently $\alpha(\{P_n(z_{s_k}):k\in\mathbb{N}\})=0$.  Taking a diagonal argument, we can assume, up to a subsequence,  that  $\{P_n(z_{s})\}_s$ is norm-convergent for all $n\in\mathbb{N}$. Then, there is a further sequence $(u_n)$ such that
$$
\lim_s\V P_n(z_s)-u_n\V=0\ \forall n\in\mathbb{N}.
$$
In particular $(u_n)$ is norm-bounded.
Take $n\le m<\omega$. We have
$$
P_n(u_m)=\lim_s P_nP_m(z_s)=\lim_s P_n(z_s)=u_n.
$$

Since $\mathcal{P}$ is boundedly complete, there exists $v_1\in P_\omega(X)$ with $P_n(v_1)=u_n$ for all $n\in\mathbb{N}$,   which implies  that $\lim_s P_n(z_s)=P_n(v_1)$ for all $n\in\mathbb{N}$. Since $v_1\in P_\omega(X)$ and $v_0\in (I-P_\omega)(X)$, the above implies that the sequence $(z_s)$ is $\mathcal{P}$-convergent to the vector $v_1+v_0\in X$ and the proof is complete.

\end{proof}

We aim to prove that approximate asymptotic centers provide us with families of sets which are $\mathcal{P}$-asymptotically compact.  The following lemma is technical:

\begin{lemma}\label{gamma}
Let $X$ be a Banach space. Let $A:X\to X$ be a linear bounded operator and assume that   $X$ is $A$-UC.
Let $C$ be a closed convex bounded subset and $(x_k)$ a sequence in $C$. Then
$$
\lim_{\delta\to 0^+}{\rm diam} \left[A(Z(\{x_k\},C,\delta))\right]=0.
$$

\end{lemma}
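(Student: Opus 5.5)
The plan is to adapt the classical argument that approximate asymptotic centers shrink in a uniformly convex space, running it by contradiction and using $A$-uniform convexity in place of uniform convexity. Set $r:=r(\{x_k\},C)$.

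First, the degenerate case $r=0$. Then every $z\in Z(\{x_k\},C,\delta)$ satisfies $\limsup_k\V x_k-z\V\le\delta$. Hence for $z,w$ in this set we get $\V z-w\V\le 2\delta$, and so
$$
{\rm diam}\,[A(Z(\{x_k\},C,\delta))]\le 2\V A\V\delta\to 0 .
$$
From now on we assume $r>0$.

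The sets $Z(\{x_k\},C,\delta)$ decrease as $\delta$ decreases, so the limit exists. Suppose it is not $0$. Then there are $\epsilon>0$ and, for each $\delta>0$, points $z,w\in Z(\{x_k\},C,\delta)$ with $\V A(z-w)\V\ge\epsilon$. Since $C$ is convex, the midpoint $(z+w)/2$ lies in $C$, so $\limsup_k\V x_k-(z+w)/2\V\ge r$.

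For large $k$ we have $\V x_k-z\V\le r+2\delta$ and $\V x_k-w\V\le r+2\delta$. We normalize by setting
$$
u_k:=\frac{x_k-z}{r+2\delta},\qquad v_k:=\frac{x_k-w}{r+2\delta},
$$
which lie in $B_X$ for $k$ large. Their difference satisfies $u_k-v_k=(w-z)/(r+2\delta)$, so
$$
\V A(u_k-v_k)\V\ge \frac{\epsilon}{r+2\delta}\ge \frac{\epsilon}{r+2}=:\epsilon'
$$
for $\delta\le1$. The key point is that $\epsilon'$ does not depend on $\delta$ or $k$, so we can apply the definition of $A$-UC with this fixed $\epsilon'$ and obtain a fixed $\gamma=\gamma(A)>0$. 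Since $\V A(u_k-v_k)\V\ge\epsilon'$, the definition forces
$$
\Big\V\frac{u_k+v_k}{2}\Big\V<1-\gamma,
$$
that is,
$$
\Big\V x_k-\frac{z+w}{2}\Big\V<(1-\gamma)(r+2\delta)
$$
for all large $k$.

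Taking $\limsup$ and comparing with the lower bound from convexity of $C$ gives $r\le(1-\gamma)(r+2\delta)$ for every $\delta\in(0,1]$. Letting $\delta\to0$ yields $r\le(1-\gamma)r$, which is impossible when $r>0$. This contradiction proves the claim.

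The only step needing care is the one where the normalization yields a separation $\epsilon'$ uniform in $\delta$. That uniformity is what allows a single $\gamma(A)$ to be used for all $\delta$ and makes the limit argument work. Everything else is routine.
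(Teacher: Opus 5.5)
Your proof is correct and follows the paper's argument essentially step by step. You argue by contradiction, normalize $x_k-z$ and $x_k-w$ by a radius slightly larger than $r$ so that the $A$-separation $\epsilon/(r+2)$ is independent of $\delta$, and apply $A$-uniform convexity at the midpoint $(z+w)/2\in C$ to push $\limsup_k\Vert x_k-(z+w)/2\Vert$ below $r$. The only differences are cosmetic: the paper fixes $\delta$ in advance with $(r+\delta)(1-\gamma(A))<r$ instead of letting $\delta\to 0$ at the end, and it treats $r=0$ via convergence of $(x_k)$, whereas your direct bound $\Vert z-w\Vert\le 2\delta$ is slightly cleaner.
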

\begin{proof}

In case that $r(\{x_k)\},C)=0$, the sequence $(x_n)$ converges to some $x\in C$ and $\{x\}=Z(\{x_k\},C)$. Thus  $Z(\{x_k\},C,\delta)$ is contained in a ball centered at $x$ with radius $\delta$ and there is nothing to prove. Set $r:=r(\{x_k\},C)>0$.

Fix some $n\in\mathbb{N}$. 
Assume,  to the contrary, that there exists some $c>0$ such that:
$$
 0<c< \inf_{\delta>0}{\rm diam} \left[A(Z(\{x_k\},C,\delta))\right]=\lim_{\delta\to 0^+}{\rm diam}\left[A(Z(\{x_k\},C,\delta))\right]
$$
  For $\epsilon:=\frac{c}{ r+1}>0$ consider the corresponding  $\gamma(A)>0$ stated in the Definition \ref{kUC}. Find some $0<\delta<1$ such that $(r+\delta)(1-\gamma(A))<r$.

\medskip

Since $c<{\rm diam} \left[A(Z(\{x_k\},C,\delta/2))\right]$, we can find two points $y_1, y_2\in Z(\{x_k\},C,\delta/2)$ such that $\Vert  A(y_1-y_2)\Vert \geq c$.

 Then, there exists $k_0\in\mathbb{N}$ such that for all $k\ge k_0$:
 $$
 \V x_k-y_1\V \le r+\delta,\qquad \V x_k-y_2\V\le r+\delta.
 $$
For $k\ge k_0$, define $\displaystyle{u_k:=\frac{x_k-y_1}{ r+\delta}}$, $\displaystyle{v_k:=\frac{x_k-y_2}{ r+\delta}}$. Then, $\Vert u_k\Vert \le 1$, $\Vert v_k\Vert \le 1$ and $\V A(u_k-v_k)\V\ge \frac{c}{ r+\delta}>\frac{c}{ r+1}$. Thus,
$$
\left\V x_k-\frac{y_1+y_2}{ 2}\right\V=(r+\delta)\left\V \frac{u_k+v_k}{ 2}\right\V\le (r+\delta)(1-\gamma(A)),$$
for all $k\ge k_0$. Taking limit as $k$ goes to infinity:
$$
\limsup_k\left\V x_k-\frac{y_1+y_2}{ 2}\right\V\le (r+\delta)(1-\gamma(A))<r,
$$
which is a contradiction with the definition of $r$.
\end{proof}

Our main  result in this section is now the following theorem alongside with the subsequent corollary:

\begin{theorem}\label{diam}
Assume that $X$ is endowed with a projection scheme $\mathcal{P}=\{P_n:1\le n\le \omega\}$ which is boundedly complete and $X$ is $\mathcal{P}$-AUC.   Then, the sequence of subsets  $\{Z(\{x_n\},C,\delta_s):s\in\mathbb{N}\}$, where $(\delta_s)$ is a nonnegative null sequence, is $\mathcal{P}$-asymptotically compact.

\end{theorem}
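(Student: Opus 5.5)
By Lemma \ref{Pasymp}, since $\mathcal{P}$ is boundedly complete, it suffices to show that for each fixed $n\in\mathbb{N}$ we have $\lim_s \alpha\bigl(P_n(Z(\{x_k\},C,\delta_s))\bigr)=0$. (Here $C$ is a closed convex bounded set and $(x_k)\subset C$.) The sets $Z_s:=Z(\{x_k\},C,\delta_s)$ are closed and bounded because they sit inside $C$. I would first pass to a nonincreasing rearrangement of $(\delta_s)$, or simply note that only the smallness of $\delta_s$ matters, so the family may be treated as nonincreasing.

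Fix $n$ and use the $\mathcal{P}$-AUC splitting $P_n=A(P_n)+B(P_n)$, where $X$ is $A(P_n)$-UC and $B(P_n)$ has finite rank. For every $s$,
$$
P_n(Z_s)\subset A(P_n)(Z_s)+B(P_n)(Z_s).
$$
By the subadditivity of the Kuratowski measure under Minkowski sums, together with $\alpha(S)\le \operatorname{diam}(S)$, this gives
$$
\alpha(P_n(Z_s))\le \operatorname{diam}\bigl[A(P_n)(Z_s)\bigr]+\alpha\bigl(B(P_n)(Z_s)\bigr).
$$
The second term vanishes. Indeed, $B(P_n)(Z_s)$ is a bounded subset of a finite-dimensional space (the range of $B(P_n)$), hence relatively compact, so its $\alpha$ is $0$.

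The first term tends to $0$ as $s\to\infty$ by Lemma \ref{gamma} applied with $A=A(P_n)$, since $\delta_s\to 0^+$. One small point needs care if some $\delta_s=0$, that is, if $Z_s$ is the exact asymptotic center. In that case $Z_s\subset Z(\{x_k\},C,\delta)$ for every $\delta>0$, so its diameter image is bounded by the limit in Lemma \ref{gamma}, which is $0$. Hence $\lim_s\alpha(P_n(Z_s))=0$ for all $n$, and Lemma \ref{Pasymp} yields $\mathcal{P}$-asymptotic compactness.

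The main (mild) obstacle is justifying the Minkowski-sum inequality for $\alpha$ and checking that Lemma \ref{Pasymp} applies to the possibly non-monotone sequence $(\delta_s)$. For the first, I would verify it directly: cover $A(P_n)(Z_s)$ by a single set of diameter $d$ and cover the compact set $B(P_n)(Z_s)$ by finitely many $\eta$-small sets. The sums of these sets cover $P_n(Z_s)$ and have diameter at most $d+\eta$. For the second, the proof of Lemma \ref{Pasymp} only uses the inclusion $Z_{s_k}\supset\{z_{s_k}\}$ and the estimate $\alpha(P_j(Z_{s_j}))<\epsilon_j$ along a subsequence. So monotonicity is not actually needed there, and the argument goes through as written.
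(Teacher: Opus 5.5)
Your proof is correct and follows the paper's argument. You reduce via Lemma \ref{Pasymp} to $\lim_s\alpha(P_n(Z_s))=0$, split $P_n=A(P_n)+B(P_n)$, discard the relatively compact finite-rank part by subadditivity of $\alpha$, and control the remainder by ${\rm diam}[A(P_n)(Z_s)]\to 0$ from Lemma \ref{gamma}.

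One side remark is wrong, though. The proof of Lemma \ref{Pasymp} does use nestedness, in the estimate $\alpha(\{P_j(z_{s_k}): s_k\ge s_j\})\le \alpha(P_j(Z_{s_j}))$, which requires $z_{s_k}\in Z_{s_j}$. The lemma genuinely fails for non-nested sequences: take $Z_s=\{e_s\}$ in $\ell_2$ with the trivial scheme $P_n=I$.

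The right way to handle a non-monotone $(\delta_s)$ is your other suggestion, made precise. Given a selection $z_s\in Z_s$, pass to a subsequence along which $\delta_s$ is nonincreasing. The corresponding sets are then nested, because $Z(\{x_k\},C,\delta)$ increases with $\delta$, and the argument applies.
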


\begin{proof}
Consider $\alpha(\cdot)$ the Kuratowski measure of noncompactness. 
Notice that $\alpha(\cdot)$ is monotone and sublinear in the sense that
 $\alpha(Z_1)\le \alpha(Z_2)$ if $Z_1\subset Z_2$ and $\alpha(Z_1+Z_2)\le \alpha(Z_1)+\alpha(Z_2)$ for all sets $Z_1,Z_2$ in $X$. Additionally
$\alpha(Z)\le {\rm diam}(Z)$ for all $Z\subset X$.

Fix some $n\in\mathbb{N}$.
According to Definition \ref{kUC}, we can assume that  $P_n=A_n+B_n$  where $B_n$ is a finite-rank operator and $X$ is $\mathcal{A}$-UC for $\mathcal{A}=\{A_n: n\in \mathbb{N}\}$.
Notice that for all bounded set $Z\subset X$ we have that $P_n(Z)\subset A_n(Z)+B_n(Z)$ and $B_n(Z)$ is relatively compact. Hence, it suffices to show that

 $$\lim_{s}{\rm diam} \left(A_n(Z(\{x_n\},C,\delta_s)\right)=0$$
for all $n\in\mathbb{N}$ and this follows from Lemma \ref{gamma}.

\end{proof}

\begin{corollary}\label{afps}
Let $X$ be endowed with a boundedly complete projection scheme $\mathcal{P}$ for which $X$ is $\mathcal{P}$-AUC.
 Let $C$ be a closed convex bounded subset, and $T:C\to C$ a nonexpansive mapping. Then, there exists an a.f.p.s.  $(y_n)$ in $C$  which is $\mathcal{P}$-convergent.

\end{corollary}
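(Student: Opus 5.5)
Start from an arbitrary approximate fixed point sequence $(x_n)\subset C$ for $T$; it exists because $\inf\{\V x-Tx\V:x\in C\}=0$. Fix a null sequence $(\delta_s)$ of positive numbers and set $Z_s:=Z(\{x_n\},C,\delta_s)$. Each $Z_s$ is nonempty, closed, convex and bounded, and the sets decrease. By Theorem \ref{diam}, the sequence $(Z_s)$ is $\mathcal{P}$-asymptotically compact. The plan is to pick, for each $s$, a point $y_s\in Z_s$ with $\V y_s-Ty_s\V$ small. Then $(y_s)$ is an a.f.p.s., and asymptotic compactness yields a $\mathcal{P}$-convergent subsequence, which is still an a.f.p.s.

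The key step is to show that each $Z_s$ contains points with arbitrarily small displacement, i.e.\ $\inf\{\V z-Tz\V: z\in Z_s\}=0$. Since $Z_s$ is closed, convex and bounded, it suffices to find a nonexpansive self-map of $Z_s$ that is close to $T$. $Z_s$ itself need not be $T$-invariant, so I would use the standard estimate for asymptotic centers. For $z\in Z_s$,
\[
\limsup_n\V x_n-Tz\V\le \limsup_n\big(\V x_n-Tx_n\V+\V Tx_n-Tz\V\big)\le \limsup_n\V x_n-z\V\le r+\delta_s ,
\]
where $r=r(\{x_n\},C)$ and $Tz\in C$. Hence $T(Z_s)\subset Z_s$, so $T$ restricts to a nonexpansive self-map of the nonempty closed convex bounded set $Z_s$.

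The standard approximation argument then applies. For $z_0\in Z_s$ and $t\in(0,1)$, the map $T_t:=(1-t)z_0+tT$ is a contraction of $Z_s$ into itself. Its Banach fixed point $z_t$ satisfies $\V z_t-Tz_t\V\le (1-t)\,{\rm diam}(C)$. Choosing $t$ close to $1$ gives $y_s\in Z_s$ with $\V y_s-Ty_s\V<1/s$.

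Finally, apply Definition \ref{ac} to the sequence $y_s\in Z_s$. This gives a subsequence $(y_{s_j})$ that is $\mathcal{P}$-convergent, and it is an a.f.p.s.\ since $\V y_{s_j}-Ty_{s_j}\V<1/s_j\to0$. I expect the main obstacle to be only checking that Theorem \ref{diam} is applied to the right sets, with $C$ the given set and $(x_n)$ an a.f.p.s. The invariance computation above is the only substantive point; once it is in place, the rest is routine.
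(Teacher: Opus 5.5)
Your proof is correct and follows the paper's argument: pick $y_s\in Z(\{x_k\},C,\delta_s)$ with $\V y_s-Ty_s\V<1/s$ and invoke Theorem \ref{diam}. Along the way you supply the two facts the paper leaves implicit, namely the $T$-invariance of the approximate asymptotic centers of an a.f.p.s.\ and the existence of approximate fixed points in such invariant sets. One point to tidy up: take $(\delta_s)$ decreasing, as the paper does, since otherwise the $Z_s$ need not be nested. Nestedness is required in Definition \ref{ac} and used in the proof of Lemma \ref{Pasymp}.
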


\begin{proof}
Take $(x_k)$ an a.f.p.s. in $C$.
It is enough to pick $y_s\in Z_s:=Z(\{x_k\},C,\delta_s)$ such that $\V y_s-T(y_s)\V <{1\over s} $ where $(\delta_s)$ is a decreasing sequence converging to 0,  apply Theorem \ref{diam} and the definition of $\mathcal{P}$-asymptotic compactness.

\end{proof}

It is well known that the norm-limit of an a.f.p.s. (when it does exist) is a fixed point of a nonexpansive mapping. 
In particular, this norm convergence may occur  whenever  $\mathcal{P}$-convergence and norm-convergence are identical for bounded sequences. Following the standard Banach terminology, we can introduce the following definition:
\begin{definition}
 If $X$ is a Banach space equipped with a projection scheme $\mathcal{P}$,  we say  that  $X$ has the $\mathcal{P}$-Schur property if every $\mathcal P$-convergent bounded sequence converges in norm.
\end{definition}

As a first application we can deduce the following:

\begin{corollary}\label{Schur} Let $X$ be a Banach space endowed with a projection scheme $\mathcal{P}$ which is $\mathcal{P}$-AUC and boundedly complete. If $X$ has the $\mathcal{P}$-Schur property, then $X$ has the FPP.
\end{corollary}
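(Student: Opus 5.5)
The argument reduces directly to Corollary \ref{afps}. Fix a nonempty closed convex bounded set $C\subset X$ and a nonexpansive map $T:C\to C$. Since $\mathcal{P}$ is boundedly complete and $X$ is $\mathcal{P}$-AUC, Corollary \ref{afps} supplies an approximate fixed point sequence $(y_n)\subset C$ that is $\mathcal{P}$-convergent to some $y\in X$.

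Next we upgrade this to norm convergence. The sequence $(y_n)$ lies in the bounded set $C$, so it is bounded. It is also $\mathcal{P}$-convergent, so the $\mathcal{P}$-Schur property gives $\V y_n-y\V\to 0$. Here the $\mathcal{P}$-limit and the norm limit agree, because the $\mathcal{P}$-topology is Hausdorff and coarser than the norm topology, as noted in Section \ref{PSsection}. Since $C$ is norm closed, $y\in C$.

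Finally we show $y$ is fixed. Nonexpansiveness gives
$$\V y-Ty\V\le \V y-y_n\V+\V y_n-Ty_n\V+\V Ty_n-Ty\V\le 2\V y-y_n\V+\V y_n-Ty_n\V .$$
The right-hand side tends to $0$, so $Ty=y$. Since $C$ and $T$ were arbitrary, $X$ has the FPP.

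None of these steps is a real obstacle. The substantive work, namely producing a $\mathcal{P}$-convergent approximate fixed point sequence, is already contained in Theorem \ref{diam} and Corollary \ref{afps}. The only point needing care is identifying the limit supplied by the $\mathcal{P}$-Schur property with the $\mathcal{P}$-limit, and this follows from the Hausdorff property of the $\mathcal{P}$-topology.
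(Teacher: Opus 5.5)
Your proof is correct and follows the paper's own argument: take the $\mathcal{P}$-convergent a.f.p.s.\ from Corollary \ref{afps}, use the $\mathcal{P}$-Schur property to get norm convergence, and observe that the norm limit lies in $C$ and is fixed by $T$. You spell out details the paper leaves implicit (closedness of $C$ and the triangle-inequality estimate), and you do not actually need the identification of the norm limit with the $\mathcal{P}$-limit, since any norm limit of an a.f.p.s.\ in $C$ is a fixed point.
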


\begin{proof}
Let $C$ be a closed convex bounded subset, $T:C\to C$ nonexpansive and $(x_n)$ an afps in $C$ which is $\mathcal{P}$-convergent. If the $\mathcal{P}$-Schur property holds, then the norm limit of $(x_n)$ is a fixed point.
\end{proof}

\section{Fixed point property beyond reflexivity: main results}

Throughout  this section $X$ will denote a Banach space equipped  with a projection scheme $\mathcal{P}$ which is boundedly complete.  From Corollary \ref{Schur} we can assume that   $X$ does not have  the $\mathcal{P}$-Schur property and we can always find   normalized sequences which are $\mathcal{P}$-convergent to the null vector.  Thus the following set
$$
\mathcal{S}_1(X):=\{(x_n)\subset X: \mathcal{P}-\lim_n x_n=0, \limsup_n\V x_n\V=1\}
$$
is nonempty.

\medskip

Next,  we are going to deal with norms  that  share some of the  features
of the  $\ell_1$-asymptotic triangle equality.  It is a well-known fact that 
if $(x_n)$ is a sequence in $\ell_1$  converging coordinatewise to 0,  for every   $x\in\ell_1$ one has 
\begin{equation}\label{l1}
\limsup_n \V x_n+x\V=\limsup_n\V x_n\V+\V x\V.
\end{equation}
In other words, the triangle inequality becomes asymptotically sharp  for the norm of the sum of the vectors $x_n$ and $x$.

In what follows, we will substitute coordinatewise convergence in $\ell_1$  by $\mathcal{P}$-convergence on a Banach space equipped with a projection scheme, and the vector $x$ in (\ref{l1}) will be required to satisfy a suitable assumption.
Our motivation comes from properties of Schauder bases that have been used to establish the existence of fixed points for nonexpansive mappings in \cite{todos}.

\medskip

\begin{definition}
Given a Banach space with  a projection scheme $\mathcal{P}=\{P_k:k\in\mathbb{N}\cup\{\omega\}\}$,  for all $k\in\mathbb{N}$ we
define
$$S(P_k):=\inf\{\limsup_n\V x_n+x\V: ((x_n),x)\in \mathcal{S}_1(X)\times S_X, P_k(x)=0\}.
$$
We say that $X$ is of  {\it $\mathcal{P}$-asymptotically  $\ell_1$-type} if
$$
S(\mathcal{P}):=\lim_k S(P_k)=2.
$$

\end{definition}

Notice that the above limit is well-defined since the sequence $(S(P_k))_k$ is nondecreasing (because $Ker(P_{k+1})\subset Ker(P_k)$) and  $S(P_k)\le 2$ for all $k\in\mathbb{N}$. When $X=\ell_1$ is endowed with the projection scheme determined by its canonical  Schauder basis,   $S(P_k)=2$ for all $k$ from (\ref{l1}).  This explains the terminology  {\it asymptotically $\ell_1$-type}.

\medskip

The coefficient $S(P_k)$ measures the sharpness of the triangle inequality for
$\limsup_n\V x_n+x\V$ whenever $P_k(x)=0$ and $\mathcal{P}-\lim_n x_n=0$. This is better understood  after the next technical lemma.

\begin{lemma} Given $k\in \mathbb{N}$, for every bounded sequence $(x_n)$ in $X$  $\mathcal{P}$-convergent to 0,  and for every $x\in X$ with $P_k(x)=0$, we have:
$$\limsup_n\Vert x_n+x\Vert \geq (S(P_k)-1)(\limsup_n\Vert x_n\Vert +\Vert x\Vert).$$
\end{lemma}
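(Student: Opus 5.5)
The plan is to reduce the statement to the definition of $S(P_k)$ by normalizing both $(x_n)$ and $x$, and to absorb the mismatch between the two scales with the triangle inequality. Write $a:=\limsup_n\V x_n\V$ and $b:=\V x\V$. The only information about $S(P_k)$ we use is $S(P_k)\le 2$, so $S(P_k)-1\le 1$.

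First I dispose of the degenerate cases.
\begin{itemize}
\item If $b=0$, the claim reads $a\ge (S(P_k)-1)a$, which holds because $S(P_k)-1\le 1$.
\item If $a=0$, then $\limsup_n\V x_n+x\V=\V x\V=b\ge (S(P_k)-1)b$, for the same reason.
\end{itemize}

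So assume $a,b>0$. Then $(x_n/a)\in\mathcal{S}_1(X)$, because $\mathcal{P}$-convergence to $0$ is preserved under scaling and $\limsup_n\V x_n/a\V=1$. Also $x/b\in S_X$ and $P_k(x/b)=0$. By the definition of $S(P_k)$,
$$\limsup_n\left\V \frac{x_n}{a}+\frac{x}{b}\right\V\ge S(P_k).$$

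Next I split into two cases according to which of $a$ and $b$ is larger.

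\textbf{Case $a\ge b$.} Here I rescale $x$ up to the scale of $(x_n)$. Write $x_n+\frac ab x=(x_n+x)+(\frac ab-1)x$. The triangle inequality gives
$$\V x_n+x\V\ge \left\V x_n+\tfrac ab x\right\V-(a-b).$$
Taking $\limsup$ and using $\limsup_n\V x_n+\frac ab x\V=a\limsup_n\V x_n/a+x/b\V\ge aS(P_k)$, I get
$$\limsup_n\V x_n+x\V\ge a(S(P_k)-1)+b\ge (S(P_k)-1)(a+b).$$
The last step uses $(S(P_k)-1)b\le b$.

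\textbf{Case $b\ge a$.} Symmetrically, I rescale $(x_n)$ to the scale of $x$. Write $\frac ba x_n+x=(x_n+x)+(\frac ba-1)x_n$. The triangle inequality and $\limsup_n\V(\frac ba-1)x_n\V=b-a$ give
$$bS(P_k)\le \limsup_n\left\V \tfrac ba x_n+x\right\V\le \limsup_n\V x_n+x\V+(b-a).$$
Hence $\limsup_n\V x_n+x\V\ge b(S(P_k)-1)+a\ge(S(P_k)-1)(a+b)$.

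There is no real obstacle. The only points needing care are choosing the rescaling direction in each case so that the correction term carries the right sign, and checking that normalized sequences land in $\mathcal{S}_1(X)$.
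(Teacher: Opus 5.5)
Your proof is correct, but it takes a different route from the paper.

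\textbf{The paper's argument.} It passes to a subsequence $(x_{n_s})$ with $\Vert x_{n_s}\Vert\to\limsup_n\Vert x_n\Vert$ and normalizes each $x_{n_s}$ by its own norm. It arranges that $\Vert x_{n_s}/\Vert x_{n_s}\Vert+x/\Vert x\Vert\Vert>S(P_k)-\epsilon$ for every $s$, and then takes norming functionals $f_s\in S_{X^*}$. Since $f_s$ is at most $1$ on each unit vector, each of the two normalized pieces must receive at least $S(P_k)-1-\epsilon$ from $f_s$. Recombining with the weights $\Vert x_{n_s}\Vert$ and $\Vert x\Vert$ gives
\[
\Vert x_{n_s}+x\Vert\ge (S(P_k)-1-\epsilon)\bigl(\Vert x_{n_s}\Vert+\Vert x\Vert\bigr)
\]
term by term along the subsequence, with no case distinction.

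\textbf{Your argument.} You normalize by the single scalar $a=\limsup_n\Vert x_n\Vert$, which places $(x_n/a)$ directly in $\mathcal{S}_1(X)$. This avoids both the subsequence extraction and Hahn--Banach. You then pay for the mismatch of scales with the triangle inequality, choosing the direction of rescaling so that the correction term is $|a-b|$.

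\textbf{What each route buys.} Yours is more elementary and actually gives a sharper bound,
\[
\limsup_n\Vert x_n+x\Vert\ge \max\{a,b\}\,S(P_k)-|a-b|=\max\{a,b\}(S(P_k)-1)+\min\{a,b\},
\]
which dominates $(S(P_k)-1)(a+b)$ because $S(P_k)\le 2$. The paper's duality argument instead gives a symmetric estimate in one stroke, valid index by index along the subsequence. Both proofs rely on the convention $S(P_k)\le 2$ in the degenerate cases; the paper's ``immediate'' case $x=0$ needs it exactly as yours does. In the main case $a,b>0$ it holds automatically, since the pair $((x_n/a),x/b)$ is admissible in the infimum defining $S(P_k)$.
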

\begin{proof}
 If $x=0$ or $\lim_n\V x_n\V=0$, the assertion is immediate. 
 
 Thus, we can assume $x\ne 0$ and  $l:=\limsup_n\V x_n\V>0$. Take a subsequence $(x_{n_s})$ such that $l:=\lim_s\V x_{n_s}\V$ and, without loss of generality, we can assume that $\V x_{n_s}\V\ne 0$ for all $s\in\mathbb{N}$, $\mathcal{P}-\lim_s {x_{n_s}\over \V x_{n_s}\V}=0$ and, given $\epsilon>0$ that

$$ \left\Vert \frac{x_{n_s}}{\Vert x_{n_s}\Vert }+\frac{x}{\Vert x\Vert}\right\Vert>S(P_k)-\epsilon.$$

Take $f_s\in X^*$ with $\V f_s\V=1$, such that
$$ \left\Vert \frac{x_{n_s}}{\Vert x_{n_s}\Vert }+\frac{x}{\Vert x\Vert}\right\Vert=f_s\left(\frac{x_{n_s}}{\Vert x_{n_s}\Vert }+\frac{x}{\Vert x\Vert}\right)=f_s\left(\frac{x_{n_s}}{\Vert x_{n_s}\Vert }\right)+f_s\left(\frac{x}{\Vert x\Vert}\right).$$
Then, $$f_s(x_{n_s}/\Vert x_{n_s}\Vert)\geq S(P_k)-\epsilon-1\quad {\rm and} \quad  f_s(x/\Vert x\Vert)\geq S(P_k)-\epsilon-1.$$

For all $s\in\mathbb{N}$, we have
$$
\begin{array}{lll}
\V x_{n_s}+x\V & \ge & f_s(x_{n_s}+x)
 =\V x_{n_s}\V f_s\left(\frac{x_{n_s}}{ \V x_{n_s}\V}\right)+\V x\V f_s\left(\frac{x}{ \V x\V}\right)\\
 &\ge & (S(P_k)-\epsilon-1)\left[\V x_{n_s}\V+\V x\V\right].\\
\end{array}
$$
Taking upper  limit when $s$ goes to infinity, the arbitrariness  of $\epsilon$ implies that
$$
\limsup_s\V x_{n_s}+x\V\ge (S(P_k)-1)\left[\lim_s\V x_{n_s}\V +\V x\V\right]= (S(P_k)-1)\left[\limsup_n\V x_{n}\V +\V x\V\right].
$$
Finally,
$$
\limsup_n\V x_n+x\V\ge \limsup_s\V x_{n_s}+x\V\ge (S(P_k)-1)\left[\limsup_n\V x_{n}\V +\V x\V\right]
$$
as we wanted to prove.

\end{proof}

The following lemma is once more technical and will be used for the proof of Theorem \ref{main}.

\begin{lemma} \label{bound} Let $X$ be a Banach space endowed with a Projection Scheme
$\mathcal{P}=\{P_k:1\le k\le \omega\}$. Fix some $k\in\mathbb{N}$. Then for every pair of  bounded sequences $(x_n), (y_n)\subset X$ with $\mathcal{P}-\lim_n x_n=x$, $\mathcal{P}-\lim_n y_n=y$, the following inequality holds:
\begin{equation}\label{ka}
\begin{array}{lll}
\displaystyle{\V I-P_k\V\limsup_m \limsup_n\V x_n-y_m\V} &\ge& (S(P_k)-1)  \limsup_n\V x_n-x\V\\
                 & + & (S(P_k)-1)^2 \limsup_m\V y_m-y\V \\
                  &+ & (S(P_k)-1)^2 \V (I-P_k)(x-y)\V\\
\end{array}
\end{equation}

In particular, if $\mathcal{P}$ is premonotone and $X$ is of $\mathcal{P}$-asymptotically $\ell_1$-type, we have
\begin{equation}\label{dl}
\limsup_m \limsup_n\V x_n-y_m\V\ge \limsup_n\V x_n-x\V+\limsup_n\V y_n-y\V.
\end{equation}

\end{lemma}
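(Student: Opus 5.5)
The plan is to apply the preceding lemma twice: once to the sequence $(x_n)$ with $m$ fixed, and once more to the sequence $(y_m)$. Write $s:=S(P_k)-1$. For fixed $m$, the starting point is
$$
\V I-P_k\V\,\V x_n-y_m\V\ \ge\ \V (I-P_k)(x_n-y_m)\V ,
$$
and we decompose
$$
(I-P_k)(x_n-y_m)=(x_n-x)+w_m-P_k(x_n-x),\qquad w_m:=(I-P_k)(x-y_m).
$$
Since $\mathcal P$-$\lim_n x_n=x$, we have $\V P_k(x_n-x)\V\to 0$ as $n\to\infty$, so this term does not affect the $\limsup$. The sequence $(x_n-x)$ is bounded and $\mathcal P$-convergent to $0$. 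Moreover $P_k(w_m)=0$, because $P_k(I-P_k)=P_k-P_k^2=0$ by condition a). The previous lemma therefore applies and gives
$$
\V I-P_k\V\limsup_n\V x_n-y_m\V\ \ge\ \limsup_n\V (x_n-x)+w_m\V\ \ge\ s\Bigl(\limsup_n\V x_n-x\V+\V w_m\V\Bigr).
$$

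Next we take $\limsup_m$ and treat $\V w_m\V$ in the same way. We write
$$
w_m=(y-y_m)+(I-P_k)(x-y)+P_k(y_m-y).
$$
Here $(y-y_m)_m$ is bounded and $\mathcal P$-convergent to $0$, the vector $(I-P_k)(x-y)$ lies in $\ker P_k$, and $\V P_k(y_m-y)\V\to0$. A second application of the lemma yields
$$
\limsup_m\V w_m\V\ge s\Bigl(\limsup_m\V y_m-y\V+\V (I-P_k)(x-y)\V\Bigr).
$$
Multiplying this by $s$ and inserting it into the previous display gives exactly (\ref{ka}). Inequality (\ref{dl}) then follows by letting $k\to\infty$. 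Premonotonicity gives $\V I-P_k\V\le1$, the hypothesis gives $s\to1$, and the last term in (\ref{ka}) is nonnegative, so it may be dropped.

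The main obstacle is the sign of $s$. Passing the $\limsup_m$ through the factor $s\,\V w_m\V$, and multiplying the second inequality by $s$, both preserve inequalities only when $s\ge0$, i.e.\ when $S(P_k)\ge1$. I would first try to show directly from the definition that $S(P_k)\ge1$. If that cannot be established in general, I would note that every use of (\ref{ka}) in the paper is in the regime $S(P_k)\to2$, and restrict (\ref{ka}) to those $k$ with $S(P_k)\ge1$; this is harmless for deducing (\ref{dl}). A minor secondary point is that both applications of the lemma need the relevant sequences to be bounded, which holds because $(x_n)$ and $(y_m)$ are bounded and the operators $P_k$ and $I-P_k$ are bounded.
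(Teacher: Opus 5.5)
Your argument is the paper's proof: two applications of the preceding lemma, first to $(I-P_k)(x_n-x)+(I-P_k)(x-y_m)$ with $m$ fixed, then to $(I-P_k)(y_m-y)+(I-P_k)(y-x)$. Using $x_n-x$ in place of $(I-P_k)(x_n-x)$ changes nothing, since $P_k(x_n-x)\to 0$.

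The sign issue you raise is real, and the paper's ``combining'' step passes over it silently. Your first option cannot work, however, because (\ref{ka}) itself can fail when $S(P_k)<1$. Take $\ell_1$ with the equivalent norm $\Vert x\Vert=\Vert x\Vert_1+c\,\bigl|\sum_i x_i\bigr|$, $c>1$, and the canonical basis projections. Then $S(P_k)=2/(1+c)$ and $\Vert I-P_k\Vert=(1+c)/2$. For $x_n\equiv e_{k+1}$ and $y_m=e_m$ (so $x=e_{k+1}$, $y=0$), the left side of (\ref{ka}) equals $1+c$. The right side equals $2(1+c)(S(P_k)-1)^2$, which is larger as soon as $(S(P_k)-1)^2>1/2$, for instance when $c=10$.

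So commit to your fallback and prove (\ref{ka}) only for those $k$ with $S(P_k)\ge 1$. That is all (\ref{dl}) needs, and it is exactly the regime in which the proof of Theorem \ref{main} uses the lemma, since there the paper assumes $S(P_k)>1$.
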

\begin{proof}
Notice that $\limsup_n\V x_n-x\V=\limsup_n\V (I-P_k)(x_n-x)\V$, $\limsup_n\V y_n-y\V=\limsup_n\V (I-P_k)(y_n-y)\V$ and $\mathcal{P}-\lim_n (I-P_k)(x_n-x)=0=\mathcal{P}-\lim_n (I-P_k)(y_n-y)$.

Then, for all $m\in\mathbb{N}$,
$$
\begin{array}{ll}
 &\displaystyle{\V I-P_k\V \limsup_n\V x_n-y_m\V} \\
 \ge &\displaystyle{ \limsup_n\V (I-P_k)(x_n-x)+(I-P_k)(x-y_m)\V}\\
  \ge &\displaystyle{ (S(P_k)-1) \left[ \limsup_n\V (I-P_k)(x_n-x)\V+\V(I-P_k) (x-y_m)\V\right]}\\
  = &\displaystyle{ (S(P_k)-1) \left[ \limsup_n\V x_n-x\V+\V(I-P_k) (x-y_m)\V\right]}
\end{array}
$$
Likewise, taking limit when $m$ goes to infinity:
$$
\begin{array}{ll}
& \displaystyle{\limsup_m\V (I-P_k)(y_m-x)\V }\\
= &\displaystyle{ \limsup_m\V (I-P_k)(y_m-y)+ (I-P_k)(y-x)\V
}\\
 \ge & \displaystyle{ (S(P_k)-1)\limsup_n\V y_n-y\V+(S(P_k)-1)\V (I-P_k)(y-x)\V.}
 \end{array}
$$

Combining the above inequalities we obtain (\ref{ka}). Finally, letting $k\to\infty$, we obtain (\ref{dl}).

\end{proof}

Next, we introduce a quantity that measures the behavior of
$\limsup_n \V x_n+\lambda x\V$, as $\lambda\to 0^+$
for sequences $(x_n)$ satisfying $\mathcal{P}$-$\lim_n x_n=0$ and vectors $x$ fixed by $P_k$. More precisely, for every $k\in\mathbb{N}$, we define
  $\Delta_{P_k}:[0,1]\to [0,+\infty)$ given by
$$
\Delta_{P_k}(\lambda):=\sup\{\limsup_n \V x_n+\lambda x\V-1: ((x_n),x)\in \mathcal{S}_1(X)\times S_X; P_k(x)=x\}
$$
and
$$
\Delta(\mathcal{P}):=\sup_{k\in\mathbb{N}} \left\{\limsup_{\lambda\to 0^+}{\Delta_{P_k}(\lambda)\over \lambda}\right\}.
$$
From the triangular inequality, the upper estimations $\Delta_{P_k}(\lambda)\le \lambda$ and $\Delta(\mathcal{P})\le 1$ always hold. 
The upper bound $\Delta(\mathcal{P})=1$ is attained when
$X=(\ell_1,|\cdot|_1)$ and $(P_k)$ is the sequence of canonical projections associated with its Schauder basis. Since $\ell_1$ fails the FPP, we will be interested in Banach spaces for which $\Delta(\mathcal{P})$ is far away from $1$.

\medskip

Some general background before  stating the main result:

Assume that $C$ is a closed convex bounded subset of $X$ and $T:C\to C$ is a fixed-point-free mapping. Standard arguments show that there exist some $a>0$ and  a further closed, convex,  $T$-invariant subset $C_0\subset C$ with ${\rm diam}(H)> a$ for all $H\subset C_0$ closed, convex and $T$-invariant (see \cite{Lin-2008} or \cite[Section 2]{japon2}).
In particular, for all $\mathcal{P}$-convergent $(x_n)\subset C_0$ a.f.p.s. with $\mathcal{P}-\lim_n x_n=x_0$,  we have $\limsup_n\V x_n-x_0\V\ge {a\over 4}$ since, otherwise, the set $\{z\in C_0: \limsup_n\V x_n-z\V\le {a\over 2}\}$ would be a nonempty, closed, convex, $T$-invariant subset of $C_0$ and its diameter would be less than  of equal to $a$. Summarizing, whenever $T$ has no fixed point, we may always assume that $T:C\to C$ verifies that $\inf\{\limsup_n\V x_n-x_0\V: (x_n) \subset C \mbox{ is an a.f.p.s. and } \mathcal{P}-\lim_n x_n=x_0\}$ is strictly greater than $0$.

\medskip

Moreover, summarizing the results developed in the previous section, we can claim the following:
 
 \medskip
 
 Let $X$ be a Banach space which can be equipped with a projection scheme $\mathcal{P}$ which is boundedly complete and  $\mathcal{P}$-AUC. Let  $C$ be a  closed, convex, bounded subset of $X$ and $T:C\to C$ a nonexpansive mapping.  We  denote:
$$
\mathcal{A}(C):=\{((x_n)_n,x_0)): (x_n)\subset C, (x_n) \mbox{ is an a.f.p.s.  and } \mathcal{P}-\lim_n x_n=x_0\}.
$$
Then  $\mathcal{A}(C)\not=\emptyset$ by Corollary \ref{afps}.

We are now in a position to state and prove the major outcome of this paper.

\begin{theorem} \label{main}
Let $X$ be a Banach space endowed with a premonotone boundedly complete projection scheme $\mathcal{P}:=\{P_n: 1\le n\le \omega\} $ such that

\begin{enumerate}

\item $X$ is $\mathcal{P}$-AUC

\item  $X$ is of $\mathcal{P}$-asymptotically $\ell_1$-type and

\item  $
\Delta(\mathcal{P})<{1\over 1+2{K}(\mathcal{P})}.
$
\end{enumerate}

Then $X$ has the FPP.

\end{theorem}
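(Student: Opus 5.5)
We argue by contradiction, following the Lin-type scheme of \cite{Lin-2008,todos}. Suppose $C$ is closed, convex and bounded and $T:C\to C$ is nonexpansive and fixed-point free. By the background remarks preceding the theorem we may shrink $C$ to the minimal-type set $C_0$ and assume
\[
d:=\inf\{\limsup_n\V x_n-x_0\V: ((x_n),x_0)\in\mathcal{A}(C)\}>0,
\]
where $\mathcal{A}(C)\neq\emptyset$ by Corollary \ref{afps}. We fix $((x_n),x_0)\in\mathcal{A}(C)$ with $\limsup_n\V x_n-x_0\V$ close to $d$, after normalising $d$ and the diameter of $C$. We will then build a second a.f.p.s. whose $\mathcal{P}$-limit lies close to $x_0$ but whose distance to $(x_n)$ is too small, contradicting Lemma \ref{bound}.

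\textbf{Construction.} Note that $x_0\in C$, since $C$ is convex and closed in the $\mathcal{P}$-topology (this needs justification, e.g.\ via bounded completeness and the a.f.p.s.\ structure; otherwise one works with $\limsup$ distances). For small $\lambda\in(0,1)$ set
\[
W_\lambda:=\{z\in C:\ \limsup_n\V x_n-z\V\le (1-\lambda)\,\limsup_n\V x_n-x_0\V+\lambda\,{\rm diam}\, C \text{-type bound}\},
\]
which is chosen so that it contains $(1-\lambda)x_n+\lambda x_0$-type points. More precisely, we use the a.f.p.s.\ $w_n^\lambda:=(1-\lambda)x_n+\lambda x_0$ perturbed through $T$ via the usual resolvent/Goebel--Karlovitz argument, giving a nonempty closed convex $T$-invariant set. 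Applying Corollary \ref{afps} to $T$ restricted to that set yields $((y_m),y_0)\in\mathcal{A}(C)$ with $\limsup_m\limsup_n\V x_n-y_m\V\le (1-\lambda)\limsup\V x_n-x_0\V+$ small. Lemma \ref{bound} in the form (\ref{dl}), which uses premonotonicity and condition (2), then gives
\[
\limsup_m\limsup_n\V x_n-y_m\V\ge \limsup\V x_n-x_0\V+\limsup\V y_m-y_0\V\ge 2d-\text{small}.
\]
So we must make the upper estimate strictly smaller than this.

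\textbf{Role of $\Delta(\mathcal{P})$.} The upper estimate comes from decomposing $x_n-y_m$ as an asymptotic ($\mathcal{P}$-null) part plus a $\lambda$-small vector. We split the vector with $P_k$ for large $k$: the part $P_k(x_0-y_0)$ is handled by $\Delta_{P_k}(\lambda)\le(\Delta(\mathcal{P})+\epsilon)\lambda$, while the tail $(I-P_k)(\cdot)$ is small because $P_k\to P_\omega$ pointwise and $I-P_\omega$ has finite rank. The cost of this splitting is the factor $\V P_k\V\le K(\mathcal{P})$, and the factor $2$ comes from $x_0-y_0$ being controlled by two distances, each at most $\limsup\V x_n-x_0\V$. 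This yields
\[
\limsup\V x_n-y_m\V\le (1-\lambda)+\lambda(1+2K(\mathcal{P}))\Delta(\mathcal{P})+o(\lambda)
\]
relative to the normalisation. Comparing with the lower bound and letting $\lambda\to0^+$ gives $1\le(1+2K(\mathcal{P}))\Delta(\mathcal{P})$, which contradicts condition (3).

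\textbf{Main obstacle.} The delicate step is the construction of the intermediate $T$-invariant set and the a.f.p.s.\ $(y_m)$, so that the estimate of $\limsup_n\V x_n-y_m\V$ has exactly the form $\V\text{null sequence}+\lambda\cdot\text{fixed vector}\V$ to which $\Delta_{P_k}$ applies, with the constant $1+2K(\mathcal{P})$. My first attempt will follow Lin's $\ell_1$ argument: take $y$ in a suitable approximate asymptotic-center set $Z(\{x_n\},C,\delta)$ intersected with the ball around $x_0$, iterate the normalisation $\limsup\V x_n\V=1$, and use (\ref{ka}) with fixed $k$ before letting $k\to\infty$.
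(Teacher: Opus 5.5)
Your outline has the right overall shape: produce a nonempty closed convex $T$-invariant level set, contradict (\ref{dl}), and use $\Delta_{P_k}$ to control a perturbation by a ``head'' vector. But the step that carries the proof is missing, and the shortcut you put in its place is false.

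Norm-closed convex sets need not be $\mathcal{P}$-closed. In $\ell_1$ with coordinatewise convergence, $e_n\to 0$ while $0\notin\{x\ge 0:\sum_i x_i=1\}$. So $x_0\in C$ cannot be assumed, and it cannot be proved from these hypotheses either. Suppose $x_0\in C$, with $\rho(C)=1$ and $\limsup_n\V x_n-x_0\V<1+\epsilon$. Then $D=\{z\in C:\limsup_n\V x_n-z\V\le 1+\epsilon\}$ is nonempty, closed, convex and $T$-invariant. Applying (\ref{dl}) to $(x_n)$ and any $((z_m),z_0)\in\mathcal{A}(D)$ gives
\[
1+\epsilon\ge\limsup_n\V x_n-x_0\V+\limsup_m\V z_m-z_0\V\ge\rho(C)+\rho(D)\ge 2 .
\]
This argument uses only premonotonicity, bounded completeness, (1) and (2). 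Yet $(\ell_1,\V\cdot\V_1)$ with its canonical basis satisfies all of these and fails the FPP. So hypothesis (3) is there precisely to handle $x_0\notin C$, and your proposal never does. Two further points in your sketch also fail. Your bound $\limsup_m\limsup_n\V x_n-y_m\V\le(1-\lambda)\limsup_n\V x_n-x_0\V+{}$small contradicts (\ref{dl}) itself. And $W_\lambda$ with a ``resolvent/Goebel--Karlovitz'' perturbation is not a construction, since Goebel--Karlovitz needs weak compactness.

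\textbf{How the paper proceeds.} It replaces $x_0$ by a term $x_{N_0}\in C_0$ and fixes $k_0$ with $\V P_{k_0}(x_{N_0}-x_0)\V<1+2\epsilon$ and $\V (I-P_{k_0})(x_{N_0}-x_0)\V<2\epsilon$. Even so, you cannot close the argument with $(x_n)$ as the reference sequence. The parameter $\lambda$ must lie below a threshold $\beta_0$ determined by $k_0$, hence by $(x_n)$. So a gain of order $\lambda(1-s)$ need not beat the slack $\limsup_n\V x_n-x_0\V-\rho(C)$, because shrinking that slack changes $(x_n)$, $k_0$ and $\beta_0$. This is why a further sequence is needed.

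The paper sets $C_0=\{z\in C:\limsup_n\V x_n-z\V\le 2(1+\epsilon)\}$ and proves bounds uniform over all $((y_n),y_0)\in\mathcal{A}(C_0)$:
\begin{itemize}
\item $\V P_k(x_0-y_0)\V\le 2K(\mathcal{P})(1+\epsilon)$, from the definition of $C_0$;
\item $\V (I-P_k)(x_0-y_0)\V+\limsup_n\V y_n-y_0\V\le A_k\to 1+2\epsilon$, from (\ref{ka}).
\end{itemize}
Pointwise convergence $P_k\to P_\omega$ cannot give the second bound, since it is not uniform in $y_0$.

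Only after fixing $k_0,\beta_0,\lambda_0$ does the paper choose $(y_n)$ with $r=\limsup_n\V y_n-y_0\V\le\rho(C_0)+t_0$. It then tests the point $(1-\lambda_0)y_m+\lambda_0x_{N_0}\in C_0$. Apply $\Delta_{P_{k_0}}$ to $(y_n-y_0)/r$ and to $P_{k_0}(x_{N_0}-y_0)$, whose norm is at most $a(\epsilon)\approx 1+2K(\mathcal{P})$; this is where the constant in (3) comes from. The result is
\[
\limsup_m\limsup_n\V y_n-(1-\lambda_0)y_m-\lambda_0x_{N_0}\V\le(2-2\lambda_0)r+\lambda_0\bigl(a(\epsilon)s+1+5\epsilon\bigr)<2\rho(C_0).
\]
Finally, apply (\ref{dl}) to $(y_n)$ and to an a.f.p.s.\ in the level set $C_1$ of $(y_n)$ at this height; this gives the contradiction.
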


\begin{proof}


\medskip

If $T$ has a fixed point, there is nothing to prove. Otherwise,  we can assume that
 $$
 \rho(C):=\inf\{\limsup_n\V x_n-x_0\V: \{(x_n)_n,x_0\}\in \mathcal{A}(C)\}>0,
 $$
After rescaling the norm, we may assume that $\rho(C)=1$.
\medskip

Take $s>0$ such that $\Delta(\mathcal{P})< s < {1\over 1+2K(\mathcal{P})}$.

\medskip

Define the function $a(\epsilon):=(1+2\epsilon)+K(\mathcal{P})2(1+\epsilon)$ for all $\epsilon>0$. Since $\lim_{\epsilon\to 0^+} a(\epsilon)s=[1+2K(\mathcal{P})]s<1$ we can fix some
 $\epsilon>0$  with
\begin{equation}\label{lambda}
a(\epsilon)s+(1+5\epsilon)<2.
\end{equation}

\medskip

Fix $\{(x_n),x_0\}\in\mathcal{A}(C)$ with
$$\limsup_n\V x_n-x_0\V< 1+\epsilon.$$

The above inequality
 implies that $\limsup_m\limsup_n\V x_n-x_m\V<2(1+\epsilon)$. Without loss of generality, we can assume that $\limsup_n\V x_n-x_m\V<2(1+\epsilon)$ for all $m\in\mathbb{N}$.
Set
$$
C_0:=\{z\in C: \limsup_n\V x_n-z\V\le 2(1+\epsilon)\},
$$
and we can assume that $(x_n)\subset C_0$. The set $C_0$ is a $T$-invariant, closed, convex, nonempty subset of $C$. Denote
$$
\rho(C_0):=\inf\{\limsup_n\V y_n-y_0\V: \{(y_n),y_0\}\in \mathcal{A}(C_0)\}\ge 1.
$$

 We will prove that there exists some $\{(y_n),y_0\}\in \mathcal{A}(C_0)$ such that
$$
r(\{y_n\},C_0)<2\rho(C_0)
$$
and this  will eventually lead to a contradiction.

\medskip

Since later we shall choose $k_0\in\mathbb{N}$ sufficiently large to satisfy some required conditions,   $X$ is of $\mathcal{P}$-asymptotically $\ell_1$-type,  we may assume without loss of generality that $S(P_k)>1$ for all $k\in\mathbb{N}$.

\medskip

 Take any  $\{(y_n),y_0\}\in\mathcal{A}(C_0)$. For $k\in\mathbb{N}$, we will  separately estimate the \lq\lq head''  $\V P_k(x_0-y_0)\V$ and the \lq\lq tail'' $\V (I-P_k)(x_0-y_0)\V$
 of the vector $ x_0-y_0$.

\medskip

First:

\begin{equation}\label{pk}
\begin{array}{rrl}
\V P_{k}(x_0-y_0)\V&=& \limsup_m\V P_{k}(x_0-y_m+y_m-y_0)\V\\
   & = &\limsup_m\V P_{k}(x_0-y_m)\V\\
 &= &\limsup_m\limsup_n \V P_{k}(x_0-x_n+x_n-y_m)\V\\
  &=&\limsup_m\limsup_n\V P_{k}(x_n-y_m)\V\\
  &\le & \V P_k\V \limsup_m\limsup_n\V x_n-y_m\V\le K(\mathcal{P})2(1+\epsilon).\\
\end{array}
\end{equation}

Next, using Lemma \ref{bound} we have:

\begin{equation}
\begin{array}{ll}
 & \displaystyle{ \V (I-P_k)(x_0-y_0)\V+\limsup_n\V y_n-y_0\V}\\
\le  &\displaystyle{ {1\over (S(P_k)-1)^2}\limsup_m\limsup_n\V x_n-y_m\V  }
 - \displaystyle{{1\over S(P_k)-1} \limsup_n\V x_n-x_0\V}   \\
 \le &\displaystyle{ {1\over (S(P_k)-1)^2}2(1+\epsilon) - {1\over S(P_k)-1} }.  \end{array}
\end{equation}
Thus,

\begin{equation}\label{Q}
\V (I-P_k)(x_0-y_0)\V+\limsup_n\V y_n-y_0\V\leq A_k:= \displaystyle{ {1\over (S(P_k)-1)^2}2(1+\epsilon) - {1\over S(P_k)-1} }.
\end{equation}
and $\lim_k A_k=1+2\epsilon$.

\medskip

Since $\mathcal{P}-\lim_n x_n=x_0$,  we have $\lim_n\V (I-P_\omega)(x_n-x_0)\V=0$ and consequently:
$$\limsup_n\V P_\omega(x_n-x_0)\V=\limsup_n\V x_n-x_0\V<1+\epsilon.$$
Fix $N_0\in\mathbb{N}$ with $x_{N_0}\in C_0$ and
$$
\V x_{N_0}-x_0\V <1+\epsilon;\ \V P_\omega(x_{N_0}-x_0)\V<1+\epsilon;\ \V (I-P_\omega)(x_{N_0}-x_0)\V<\epsilon.
$$

For every $k\in\mathbb{N}$ we have:
$$
\begin{array}{lll}
\V P_k(x_{N_0}-x_0)\V&\le &\V (P_k-P_\omega)(x_{N_0}-x_0)\V+ \V P_\omega(x_{N_0}-x_0)\V \\
 &\le & \V (P_k-P_\omega)(x_{N_0}-x_0)\V+1+\epsilon;
                                    \end{array}
$$
and
$$
\begin{array}{lll}
\V (I-P_{k})(x_{N_0}-x_0)\V&\le & \V (P_\omega-P_{k})(x_{N_0}-x_0)\V+\V (I-P_\omega)(x_{N_0}-x_0)\V\\
             &\le &\V (P_\omega-P_{k})(x_{N_0}-x_0)\V+\epsilon\\
             \end{array}.
$$

By the definition of a projection scheme (see Definition \ref{PS}.b)), the sequence $(P_k)_k$ converges pointwise to $P_\omega$.
Thus, we can take $k_0$ large enough so that $$\V (P_\omega-P_{k_0})(x_{N_0}-x_0)\V<\epsilon\quad  {\rm and} \quad A_{k_0}<1+3\epsilon.$$

Consequently, we deduce:
\begin{equation}\label{k}
\V P_{k_0}(x_{N_0}-x_0)\V<1+2\epsilon; \
\V (I-P_{k_0})(x_{N_0}-x_0)\V < 2\epsilon;
\end{equation}

\medskip

Then, for all $\{(y_n),y_0\}\in\mathcal{A}(C_0)$, we obtain:

\begin{equation}
\begin{array}{lll}
\V P_{k_0}(x_{N_0}-y_0)\V& \le & \V P_{k_0}(x_{N_0}-x_0)\V+ \V P_{k_0}(x_0-y_0)\V\\
    &\le & (1+2\epsilon)+K(\mathcal{P})2(1+\epsilon)=a(\epsilon),\\
    \end{array}
\end{equation}

and

\begin{equation}
\begin{array}{lll}\label{imp}
\V (I-P_{k_0})(x_{N_0}-y_0)\V +\limsup_n\V y_n-y_0\V &\le& \V (I-P_{k_0})(x_{N_0}-x_0)\V\\
&+ & \V (I-P_{k_0})(x_0-y_0)\V\\
&+&\limsup_n\V y_n-y_0\V\\
&\le & 2\epsilon+1+3\epsilon=1+5\epsilon.\\
\end{array}
\end{equation}

\medskip

Once  $x_{N_0}$ and $P_{k_0}$ have been fixed, the assumptions imply $\limsup_{\beta\to 0^+}{\Delta_{P_{k_0}(\beta)}\over \beta}\le \Delta(\mathcal{P})<s $ and there exists some $\beta_0\in(0,1)$ such that
$$
\Delta_{P_{k_0}}(\beta) <\beta s
$$
for every $\beta\le \beta_0$ and
\begin{equation}\label{beta}
\limsup_n\V x_n+\beta x\V\le 1+\beta s
\end{equation}
 for every $\{(x_n),x\}\in \mathcal{S}_1(X)\times S_X$ satisfying $P_k(x)=x$ and $\beta\in (0, \beta_0]$.

\medskip

Take $\lambda_0\in (0,1)$ such that $\lambda_0<{\beta_0\over a(\epsilon)}$.

\medskip

In particular, multiplying by $\lambda_0$ in (\ref{lambda}) and summing $2\rho(C_0)$ on each side of the equation we have:
$$
2\rho(C_0)+\lambda_0 a(\epsilon) s+\lambda_0(1+5\epsilon)<2\lambda_0 +2\rho(C_0) \le2\lambda_0 \rho(C_0)+2\rho(C_0).
$$
since $\rho(C_0)\ge 1$.
The above implies that:
$$
(2-2\lambda_0)\rho(C_0)+\lambda_0a(\epsilon)s+\lambda_0(1+5\epsilon)<2\rho(C_0).
$$
Take $t_0>0$ small enough so that
\begin{equation}\label{t0}
(2-2\lambda_0)(\rho(C_0)+t_0)+\lambda_0a(\epsilon)s+\lambda_0(1+5\epsilon)<2\rho(C_0).
\end{equation}

\medskip

Finally, take $((y_n),y_0)\in\mathcal{A}(C_0)$ with $r:=\limsup_n\V y_n-y_0\V\le \rho(C_0) + t_0$.

\medskip

  We have:
$$
\begin{array}{ll}
&\limsup_n\left\V y_n-y_0+\lambda_0 P_{k_0} (x_{N_0}-y_0)\right\V   \\
=& \displaystyle{r \limsup_n\left\V {y_n-y_0\over r}+{\lambda_0 \V P_{k_0} (x_{N_0}-y_0)\V \over r} {P_{k_0} (x_{N_0}-y_0)\over \V P_{k_0} (x_{N_0}-y_0)\V }\right\V}. \\
\end{array}
$$

\medskip

Notice that  $1\le r$ and  $\displaystyle{{\lambda_0 \V P_{k_0} (x_{N_0}-y_0)\V \over r}\le \lambda_0 a(\epsilon)<\beta_0.}$ Using (\ref{beta}) we deduce:

\begin{equation}\label{cota}
\begin{array}{lll}
\limsup_n\V y_n-y_0+\lambda_0 P_{k_0} (x_{N_0}-y_0)\V& \le & r\left[  1+{\lambda_0 \V P_{k_0} (x_{N_0}-y_0)\V\over r} s \right]\\
&= &\limsup_n\V y_n-y\V+\lambda_0  \V P_{k_0} (x_{N_0}-y_0)\V s\\
&\le & \limsup_n\V y_n-y\V+\lambda_0 a(\epsilon) s.\\
\end{array}
\end{equation}

\medskip

For all $n,m\in\mathbb{N}$,  we have:
\begin{equation}\label{cota2}
\begin{array}{lll}
\V y_n-(1-\lambda_0) y_m-\lambda_0 x_{N_0}\V&=&\V (y_n-y_0)-(1-\lambda_0) (y_m-y_0)-\lambda_0 (x_{N_0}-y_0)\V\\
& \le &
(1-\lambda_0)\V y_m-y_0\V +\V y_n-y_0-\lambda_0 P_{k_0}(x_{N_0}-y_0)\V\\
& +&\lambda_0 \V (I-P_{k_0})(x_{N_0}-y_0)\V.
\end{array}
\end{equation}

Taking limit as $n$ goes to infinity and using (\ref{cota}) and (\ref{cota2}):
$$
\begin{array}{ll}
& \limsup_n\V y_n-(1-\lambda_0) y_m-\lambda_0 x_{N_0}\V\\
\le & (1-\lambda_0)\V y_m-y_0\V+ \limsup_n\V y_n-y_0\V +\lambda_0 a(\epsilon) s+\lambda_0  \V (I-P_{k_0})(x_{N_0}-y_0)\V
\end{array}
$$

Taking limit when $m$ goes to infinity and using (\ref{imp}):
$$
\begin{array}{ll}
&\limsup_m \limsup_n\V y_n-(1-\lambda_0) y_m-\lambda_0 x_{N_0}\V\\
\le & (1-\lambda_0)\limsup_m\V y_m-y_0\V+ \limsup_n\V y_n-y_0\V+ \lambda_0 a(\epsilon) s\\
  +&\lambda_0 (1+5\epsilon-\limsup_n\V y_n-y_0\V).\\
\end{array}
$$

Consequently

$$
\begin{array}{lll}
& & \limsup_m\limsup_n\V y_n-(1-\lambda_0) y_m-\lambda_0 x_{N_0}\V\\
  & \le &   (2-2\lambda_0)\limsup_n\V y_n-y_0\V+   \lambda_0 a(\epsilon) s  +\lambda_0 (1+5\epsilon) \\
  &\le & (2-2\lambda_0)(\rho(C_0)+t_0)+  \lambda_0 a(\epsilon) s  +\lambda_0 (1+5\epsilon)\\
  &<&2\rho(C_0);\\
\end{array}
$$
where we have used (\ref{t0}).

\medskip

Choose $\alpha<2\rho(C_0)$  such that
$$\limsup_m\limsup_n\V y_n-(1-\lambda_0) y_m-\lambda_0 x_{N_0}\V<\alpha$$
and  $m\in\mathbb{N}$ with
$$
\limsup_n\V y_n-(1-\lambda_0) y_m-\lambda_0 x_{N_0}\V\le \alpha<2\rho(C_0).
$$
Since $(1-\lambda_0) y_m+\lambda_0 x_{N_0}\in C_0$ by convexity, we deduce that the set
$$
C_1:=\{ z\in C_0: \limsup_n\V y_n-z\V\le \alpha\}
$$
is closed, convex, $T$-invariant and nonempty. Take $\{(z_n), z_0\}\in \mathcal{A}(C_1)$. Using again inequality (\ref{dl}) we have:
$$
\begin{array}{lll}
2\rho(C_0)&>& \alpha\ge \limsup_m\limsup_n\V y_n-z_m\V\\
   & \ge & \limsup_n\V y_n-y_0\V+\limsup_m\V z_m-z_0\V\ge 2\rho(C_0).
\end{array}
$$

The above contradiction implies  that $T$ has a fixed point, as we wanted to prove.
\end{proof}

\def\ve{\vert\vert\vert}

 We first present an immediate application of Theorem \ref{main}, which shows that the class  of nonreflexive Banach spaces with the FPP is substantially larger than previously known.  It is worth recalling that the question of whether the FPP implies reflexivity remained open for more than forty years and the question of whether the FPP is preserved by direct 2-sums is still open.

\begin{theorem}\label{sum} Let $X=(\ell_1,\ve\cdot \ve)\oplus_2 \ell_2$, where the norm $\ve \cdot\ve$ is given by (\ref{Linnorm}). Then $X$ has the FPP.

\end{theorem}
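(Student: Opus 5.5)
The plan is to apply Theorem \ref{main} to $X=(\ell_1,\ve\cdot\ve)\oplus_2\ell_2$ with the projection scheme $P_n(x,z)=(\sum_{k\le n}t_ke_k,\,z)$ for $x=\sum_k t_ke_k$, $n\in\mathbb{N}$, and $P_\omega=I$. Conditions a)–c) of Definition \ref{PS} are immediate, since $(e_n)$ is a Schauder basis of $\ell_1$ and $\ve\cdot\ve$ is equivalent to $\V\cdot\V_1$.

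\textbf{Structural hypotheses.} First I would check premonotonicity and compute $K(\mathcal{P})$.
\begin{itemize}
\item For the tail,
$$\ve (I-P_n)x\ve=\sup_m\gamma_m\sum_{k\ge \max(m,n+1)}|t_k|\le \ve x\ve,$$
so $\V I-P_n\V\le 1$.
\item For the head, $\ve P_nx\ve=\sup_{m\le n}\gamma_m\sum_{k=m}^n|t_k|\le\ve x\ve$. Hence $K(\mathcal{P})=1$, and condition (3) reduces to $\Delta(\mathcal{P})<1/3$.
\item Bounded completeness: if $P_n(x_m)=x_n$ for $n\le m$, all $x_n$ share the same $\ell_2$-component $z$. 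Their $\ell_1$-parts are the partial sums of one coefficient sequence, and these are bounded in $\V\cdot\V_1$ by equivalence of norms. So the series converges in $\ell_1$ and gives the required vector.
\item $\mathcal{P}$-AUC: write $P_n=A+B_n$ with $A(x,z)=(0,z)$ and $B_n(x,z)=(P_nx,0)$. Then $B_n$ has finite rank, and $X$ is $A$-UC by Example \ref{puc}.
\end{itemize}

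\textbf{Asymptotically $\ell_1$-type.} A bounded sequence $(u_n,w_n)$ is $\mathcal{P}$-null exactly when $u_n\to0$ coordinatewise and $\V w_n\V_2\to0$, since $P_1$ already contains the whole $\ell_2$-component. Thus for $((u_n,w_n))\in\mathcal{S}_1(X)$ we have $\limsup\ve u_n\ve=1$, and the $\ell_2$-part disappears asymptotically.

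If $P_k(x,z)=0$, then $z=0$ and $x$ is supported in $(k,\infty)$. Up to an $\epsilon$ we may truncate $x$ to $[k+1,N]$. For large $n$, $u_n$ is small on $[1,N]$. Testing the supremum in (\ref{Linnorm}) at $m=k+1$ gives
$$\limsup_n\ve u_n+x\ve\ge \gamma_{k+1}\bigl(\V x\V_1+\limsup_n\V u_n\V_1\bigr)\ge\gamma_{k+1}\bigl(\ve x\ve+\limsup_n\ve u_n\ve\bigr).$$
Hence $S(P_k)\ge 2\gamma_{k+1}\to2$.

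\textbf{Estimate of $\Delta(\mathcal{P})$; this is the main step.} I expect to show $\Delta(\mathcal{P})=0$. Fix $k$ and $(x,z)\in S_X$ with $P_k(x,z)=(x,z)$, so $x$ is supported in $[1,k]$ and $\V x\V_1\le 1/\gamma_1$.

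For a $\mathcal{P}$-null sequence with $\limsup\ve u_n\ve=1$, the mass of $u_n$ escapes to infinity, where $\gamma_m\to1$. Hence $\limsup\V u_n\V_1=1$ and $\ve u_n\ve$ is asymptotically attained at large $m$.

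Now evaluate $\ve u_n+\lambda x\ve$. For $m>k$ the supremand ignores $x$ and is at most $1$ asymptotically. For $m\le k$ it is at most $\gamma_m(1+\lambda/\gamma_1)+o(1)$. Since $\max_{m\le k}\gamma_m<1$, this is below $1$ once $\lambda$ is small, with $k$ fixed.

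Therefore $\limsup_n\ve u_n+\lambda x\ve\le 1$, and
$$\limsup_n\V (u_n,w_n)+\lambda(x,z)\V\le(1+\lambda^2)^{1/2}\le 1+\lambda^2/2.$$
So $\Delta_{P_k}(\lambda)/\lambda\to0$ for each $k$, and $\Delta(\mathcal{P})=0<1/3$. Theorem \ref{main} then yields the FPP.

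The delicate point is making the mass-escape argument rigorous, namely that $\limsup\V u_n\V_1=\limsup\ve u_n\ve$ for coordinatewise null sequences. It also crucially uses $\gamma_m<1$ on the finitely many indices $m\le k$.
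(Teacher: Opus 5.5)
Your proposal is correct and follows essentially the paper's route. You use the same projection scheme (truncate the $\ell_1$-component, keep the $\ell_2$-component, $P_\omega=I$) and the same finite-rank/UC splitting via Example \ref{puc}. You prove the asymptotically $\ell_1$-type condition through an estimate $S(P_k)\ge 2\gamma_{k+1}$, and you get $\Delta(\mathcal{P})=0$ from $\limsup_n\ve u_n+\lambda x\ve\le 1$ for small $\lambda$ combined with the $\oplus_2$ structure. Your details are, if anything, more careful than the paper's sketch: you test the supremum directly at $m=k+1$ instead of passing to a block basic sequence, and you take the $\lambda$-threshold from $\max_{m\le k}\gamma_m<1$ and $\V x\V_1\le 1/\gamma_1$.
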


\begin{proof}

We fix some notation. For $x\in X$ we write $x=x^1+x^2$ where $x^1\in\ell_1$ and $x^2\in \ell_2$. Let $(R_n)_n$ be the  projections on $\ell_1$ associated to its canonical Schauder basis.
 We define a projection scheme on $X$ in the following way: we set $P_\omega=I$, the identity on X, and for all $n\in\mathbb{N}$ we define the projection:  $P_n(x)=P_n(x^1+x^2):=R_n(x^1)+x^2$. Then $\mathcal{P}:=\{P_n: 1\le n\le \omega\}$ is a projection scheme which is boundedly complete.

 We prove that $\mathcal{P}$ satisfies the remaining assumptions of Theorem \ref{main}.

Observe that $P_n=R_n+I_2$  where $I_2(x^1+x^2)=x^2$ and every $R_n$ has finite rank. Furthermore, $X$ is $\mathcal{A}$-UC where $\mathcal{A}$  consists solely of the projection $I_2$, since the direct sum is endowed with the $\ell_2$-norm (see Example \ref{puc}).

\medskip

Additionally,  $X$ is of $\mathcal{P}$-asymptotically $\ell_1$-type:  Indeed,  let $(x_n)$ be a $\mathcal{P}$-null sequence with $\limsup_n\V x_n\V=1$. In particular $\lim_n R_k(x_n^1)=0$ for all $k\in\mathbb{N}$ and $\lim_n I_2(x_n^2)=x_n^2=0$. Consequently, $(x_n^1)$ is  coordinatewise convergent to 0  and we can assume, taking a subsequence and  without loss of generality, that $(x_n^1)$ is a block basic sequence.

 Take $x=x^1+x^2\in S_X$ with $P_k(x^1+x^2)=R_k(x^1) +I_2(x^2)=0$. Thus  $x^2=0$ and
 $$
\begin{array}{lll}
\limsup_n\V x_n+x\V&=&\limsup_n\V (I-P_k)(x_n)+x\V\\
                &=&\limsup_n\ve (I-R_k)(x_n^1)+x^1\ve\\
                                  & \ge & \gamma_k \left(\limsup_n\ve  (I-R_k)(x_n^1)\ve+ \ve x^1\ve\right)\\
                                  & = & \gamma_k \left(\limsup_n\V  x_n\V + \V x\V\right)=2\gamma_k\\
                                  \end{array},
$$
which shows that $S(P_k)\ge 2\gamma_k$ and  $X$ is of $\mathcal{P}$-asymptotically $\ell_1$ type (see \cite{todos}).

\medskip

We next prove that $\limsup_{\lambda\to 0^+}{\Delta_{P_k}(\lambda)\over \lambda}=0$ for all $k\in\mathbb{N}$, which implies that $\Delta(\mathcal{P})=0$.

 Fix $k\in\mathbb{N}$ and let $x=x^1+x^2\in X$ with $P_k(x)=x$ and $\V x\V=1$. Consider $(x_n)$ a $\mathcal{P}$-null convergent sequence with $\limsup_n\V x_n\V=1$. Without loss of generality, we can assume that $x_n=x_n^1$ with $(x_n^1)$ a weak$^*$-convergent sequence in $\ell_1$ and $\limsup_n \ve x_n^1\ve =1$. Let $0<\lambda<{1\over \gamma_k}-1$. Then
$$
\begin{array}{lll}
\limsup_n\V x_n+\lambda x\V^2&= &\lambda^2\V x^2\V^2+\limsup_n\ve x_n^1+\lambda x^1\ve^2\\
& =& \lambda^2\V x^2\V^2+ \limsup_n \ve x_n^1\ve^2\le   \lambda^2+1,\\
\end{array}
$$
 which implies $\limsup_n\V x_n+\lambda x\V\le  (\lambda^2+1)^{1\over 2}$ and $\Delta_{P_k}(\lambda)\le (\lambda^2+1)^{1\over 2}-1$ for $0<\lambda <{1\over \gamma_k}-1$.
Therefore
$$
\limsup_{\lambda\to 0^+}{\Delta_{P_k}(\lambda)\over \lambda}=0.
$$
All the assumptions of Theorem  \ref{main} are satisfied and we can conclude that $X$ has the FPP.

\end{proof}

Since the preceding argument applies to every uniformly convex Banach space we can conclude:

 \begin{corollary}\label{uciso} Every uniformly convex Banach space $Y$ can be isometrically embedded in a nonreflexive Banach space with the FPP.

  \end{corollary}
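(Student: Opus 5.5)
The embedding will be the one from Theorem \ref{sum}, with $\ell_2$ replaced by $Y$. Let $X=(\ell_1,\ve\cdot\ve)\oplus_2 Y$, where $\ve\cdot\ve$ is Lin's norm (\ref{Linnorm}). The map $y\mapsto 0+y$ is an isometric embedding of $Y$ into $X$. Moreover $X$ is nonreflexive, because it contains the nonreflexive space $(\ell_1,\ve\cdot\ve)$, which is isomorphic to $\ell_1$, as a complemented subspace. It therefore remains only to prove that $X$ has the FPP, and I will do this by verifying the hypotheses of Theorem \ref{main}.

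\emph{The projection scheme.} Write $x=x^1+x^2$ with $x^1\in\ell_1$ and $x^2\in Y$. Let $(R_n)$ be the basis projections on $\ell_1$, put $P_n(x)=R_n(x^1)+x^2$ and $P_\omega=I$. Conditions a)--c) of Definition \ref{PS} are immediate. Premonotonicity holds because $\V(I-P_n)x\V=\ve(I-R_n)x^1\ve\le\ve x^1\ve\le\V x\V$. The last step uses that Lin's norm satisfies $\ve(I-R_n)u\ve=\sup_{m>n}\gamma_m\sum_{j\ge m}|t_j|\le\ve u\ve$. Also $K(\mathcal{P})<\infty$, since $\ve R_n\ve$ is bounded.

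Bounded completeness needs the compatibility condition. A bounded sequence $(u_n)$ with $P_n(u_m)=u_n$ for $n\le m$ has a common $Y$-component $y$ and $\ell_1$-parts $R_n(v)$ for one fixed scalar sequence $v$. Lin's norm dominates $\gamma_1\V\cdot\V_1$, so the finite sections of $v$ are bounded in $\ell_1$, which gives $v\in\ell_1$. Then $x=v+y$ satisfies $P_n(x)=u_n$.

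\emph{$\mathcal{P}$-AUC.} Write $P_n=R_n+I_2$, where $I_2(x)=x^2$ and $R_n$ has finite rank. Example \ref{puc}, applied with $Z=(\ell_1,\ve\cdot\ve)$, shows that $X$ is $I_2$-UC, since $Y$ is uniformly convex.

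\emph{Asymptotically $\ell_1$-type and $\Delta(\mathcal{P})=0$.} Take a $\mathcal{P}$-null sequence $(x_n)$. It has $x_n^2\to0$ in norm and $x_n^1\to0$ coordinatewise. If $P_k(x)=0$, then $x^2=0$ and $R_kx^1=0$. The computation of Theorem \ref{sum} uses only the $\ell_1$-component and the $\ell_2$-sum structure, never any property of $\ell_2$ beyond $Y$ being a normed space. It therefore gives $S(P_k)\ge2\gamma_k\to2$ verbatim. The main point to check is the estimate $\limsup_n\ve x_n^1+\lambda x^1\ve=\limsup_n\ve x_n^1\ve$ for $x$ with $R_kx^1=x^1$ and $\lambda$ small, i.e.\ $\lambda\ve x^1\ve$ absorbed by the factor $\gamma$. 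I take this as established in the proof of Theorem \ref{sum}, since it concerns only Lin's norm. Given it, the $\oplus_2$ structure yields
\[
\limsup_n\V x_n+\lambda x\V\le(1+\lambda^2)^{1/2}.
\]
Hence $\Delta_{P_k}(\lambda)/\lambda\to0$, so $\Delta(\mathcal{P})=0<1/(1+2K(\mathcal{P}))$, and Theorem \ref{main} gives the FPP.
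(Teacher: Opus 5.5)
Your proposal is correct and is essentially the paper's argument: take $X=(\ell_1,\ve\cdot\ve)\oplus_2 Y$ with the projection scheme of Theorem \ref{sum}, get $\mathcal{P}$-AUC from Example \ref{puc}, and rerun the estimates $S(P_k)\ge 2\gamma_k$ and $\Delta(\mathcal{P})=0$ so that Theorem \ref{main} applies, since those estimates use only Lin's norm on the $\ell_1$-component and the $\oplus_2$-structure. You also supply details the paper leaves implicit (the isometric embedding, nonreflexivity, premonotonicity and bounded completeness of $\mathcal{P}$), and you verify them correctly.
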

\begin{proof}
Define
$X:=(\ell_1,\ve\cdot \ve)\oplus_2 Y$. Using the same arguments as in Theorem \ref{sum}, we can prove the FPP from Theorem \ref{main}.

\end{proof}

Our main theorem also includes, as a particular case, the class of Banach spaces endowed with a boundedly complete Schauder basis studied in \cite{todos}.
The following notion is introduced in  \cite[Definition 2.2] {todos}:

\begin{definition} \label{ss} Let $X$ be a Banach space with a  Schauder basis and denote $(P_k)$ the standard projections. The norm of $X$ is called near-infinity concentrated (n.i.c.) if it has the following properties:

\begin{itemize}

\item[(1)]
It is a sequentially separating norm.

\item[(2)] It is a premonotone norm.

\item[(3)]  There exist $R_0>5$ and $M\in [0,1)$ such that for every $k\in \mathbb{N}$, there exists a function $F_k:(0,\infty) \to [0, \infty)$ satisfying the following conditions:
\begin{itemize}

\item[(a)] $\lim_{\lambda\to 0^+} F_k(\lambda) \leq M/R_0$.

\item[(b)]

For every bounded pointwise-null sequence $(x_n)$ with $\liminf_n \V x_n \V  \geq 1$, for all $\lambda\in (0,\infty)$, and for every $z\in X$ with  $P_k(z)=z$ and $\V z\V \leq R_0$, we have
$$\limsup_n \V x_n+\lambda z\Vert \leq \limsup \Vert x_n\Vert +F_k(\lambda) \Vert z\Vert.$$
\end{itemize}\end{itemize} \end{definition}

Using this notation, the  following theorem is proved in \cite[Theorem 2.7] {todos}:

\medskip

\begin{theorem} \label{castillo} Let $X$ be a Banach space with a boundedly complete Schauder basis and  a near-infinity concentrated (n.i.c.) norm on X. Then $X$ has the FPP. \end{theorem}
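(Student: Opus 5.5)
Theorem \ref{castillo} will follow from Theorem \ref{main} applied to the projection scheme of Example \ref{Sch}: the canonical projections $P_k$ of the basis, with $P_\omega=I$. The hypotheses then have to be checked one by one.

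First, $\mathcal{P}$ is premonotone by condition (2) of Definition \ref{ss}. It is boundedly complete because the basis is boundedly complete: a bounded compatible sequence $(x_n)$ with $P_n(x_m)=x_n$ for $n\le m$ defines coefficients whose partial sums are bounded, so the series converges to some $x$ with $P_n(x)=x_n$. $X$ is $\mathcal{P}$-AUC with $A(P_n)=0$ and $B(P_n)=P_n$, since each $P_n$ has finite rank. Finally, $\mathcal{P}$-convergence is coordinatewise convergence, so $\mathcal{S}_1(X)$ consists of pointwise-null sequences with $\limsup_n\V x_n\V=1$.

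Second, I would show that $X$ is of $\mathcal{P}$-asymptotically $\ell_1$-type using the sequentially separating property (1). As I recall from \cite{todos}, this says that for pointwise-null bounded $(x_n)$ and $x$ with $P_k(x)=0$,
$$\limsup_n\V x_n+x\V\ge \gamma_k\bigl(\limsup_n\V x_n\V+\V x\V\bigr)\quad\text{with } \gamma_k\to 1.$$
Equivalently, the norm is asymptotically additive on far-separated blocks. This gives $S(P_k)\ge 2\gamma_k\to 2$, exactly as in the proof of Theorem \ref{sum}. I would restate the precise definition of "sequentially separating" and, if necessary, pass to subsequences of block form before applying it.

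Third, I would bound $\Delta(\mathcal{P})$ using (3). Fix $k$, $(x_n)\in\mathcal{S}_1(X)$ and $x\in S_X$ with $P_k x=x$. Passing to a subsequence along which $\V x_n\V\to1$, we may assume $\liminf_n\V x_n\V\ge 1$. Then (b), applied with $z=x$ (note $\V z\V=1\le R_0$) and parameter $\lambda$, gives
$$\limsup_n\V x_n+\lambda x\V\le 1+F_k(\lambda).$$
Hence $\Delta_{P_k}(\lambda)\le F_k(\lambda)$ for the pure scalar $\lambda$.

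This is the main obstacle: $\Delta(\mathcal{P})$ is a derivative-type quantity $\Delta_{P_k}(\lambda)/\lambda$, while (a) only bounds $F_k(\lambda)$ itself as $\lambda\to0$. The slack must come from the freedom $\V z\V\le R_0$. For small $\lambda$, I would apply (b) with $z=(\lambda/\mu)x$ for a fixed $\mu$, so that $\lambda x=\mu z$ and $\V z\V=\lambda/\mu$. This gives
$$\limsup_n\V x_n+\lambda x\V\le 1+F_k(\mu)\,\frac{\lambda}{\mu}.$$
Thus $\Delta_{P_k}(\lambda)/\lambda\le F_k(\mu)/\mu$ for every admissible $\mu$. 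Choosing $\mu$ as small as the constraint $\V z\V\le R_0$ allows, namely $\mu\approx\lambda/R_0$, together with $\lim_{\mu\to0}F_k(\mu)\le M/R_0$, should yield
$$\limsup_{\lambda\to0^+}\frac{\Delta_{P_k}(\lambda)}{\lambda}\lesssim \frac{M/R_0}{1/R_0}\cdot\frac{1}{R_0}\,\bigl(\text{normalization}\bigr).$$
The intended bound is $\Delta(\mathcal{P})\le M/R_0<1/5$. I expect the constants to need care here.

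Since premonotonicity gives $\V P_k\V\le 1+\V I-P_k\V\le 2$, we have $K(\mathcal{P})\le 2$. Condition (3) of Theorem \ref{main} then needs only $\Delta(\mathcal{P})<1/(1+2K(\mathcal{P}))$, and $1/(1+2K(\mathcal{P}))\ge 1/5$. This is precisely why $R_0>5$ and $M<1$ appear in Definition \ref{ss}, so I expect the estimate $\Delta(\mathcal{P})\le M/R_0<1/5$ to close the argument. If the rescaling does not give exactly this bound, I would instead reprove the final inequality chain of Theorem \ref{main} directly with $F_k$ in place of $\Delta_{P_k}$, using that only vectors $P_{k_0}(x_{N_0}-y_0)$ of norm at most $a(\epsilon)\le R_0$ are ever used there.
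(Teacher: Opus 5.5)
Your reduction to Theorem \ref{main} via the basis projection scheme is the route the paper indicates after citing \cite{todos}. Your checks of premonotonicity, bounded completeness, $\mathcal P$-AUC (with $A(P_n)=0$), $K(\mathcal P)\le 2$ and the $\mathcal P$-asymptotically $\ell_1$-type condition are fine. The gap is in your third step, and it is not a matter of constants.

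Read literally as printed, with $F_k(\lambda)\V z\V$ on the right of (b), condition (3) of Definition \ref{ss} is empty: $F_k(\lambda)=\lambda$ satisfies (a), and (b) is then just the triangle inequality. Under that reading the statement is false. Indeed, $\ell_1$ with its usual norm and unit vector basis is sequentially separating, premonotone and boundedly complete, yet it fails the FPP (and has $\Delta(\mathcal P)=1$). So no manipulation can extract $\Delta(\mathcal P)<1/(1+2K(\mathcal P))$ from that condition, and your rescaling does not. From $\Delta_{P_k}(\lambda)/\lambda\le F_k(\mu)/\mu$ for $\mu\ge\lambda/R_0$, pushing $\mu$ down to $\lambda/R_0$ gives $R_0F_k(\lambda/R_0)/\lambda$, which blows up whenever $\lim_{\mu\to 0}F_k(\mu)>0$. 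For large $\mu$ nothing is known about $F_k$, so the bound you display does not follow. Your fallback fails for the same reason. In the last chain of Theorem \ref{main} the error term would be $F_{k_0}(\lambda_0)\V P_{k_0}(x_{N_0}-y_0)\V$, which is of order $1$, whereas the only gain available in (\ref{t0}) is of order $\lambda_0$.

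The missing idea is that the n.i.c. condition is first order. Condition (b) must read $\limsup_n\V x_n+\lambda z\V\le\limsup_n\V x_n\V+\lambda F_k(\lambda)\V z\V$; the factor $\lambda$ has been lost in Definition \ref{ss}. Note also that the paper's item (3) is stated in the converse direction to the one this reduction needs. With the first-order reading no rescaling is needed. Taking $z=x$ gives $\Delta_{P_k}(\lambda)\le\lambda F_k(\lambda)$, so $\Delta(\mathcal P)\le M/R_0<1/5\le 1/(1+2K(\mathcal P))$, and Theorem \ref{main} applies. As you guessed, this is where $R_0>5$ and $M<1$ come from, since $\V P_{k_0}(x_{N_0}-y_0)\V\le a(\epsilon)\to 1+2K(\mathcal P)\le 5$.

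One smaller repair is needed in the same step. You cannot just pass to a subsequence with $\V x_n\V\to 1$, since this may lower $\limsup_n\V x_n+\lambda x\V$. Instead, first pass to a subsequence realizing that limsup, along which $\V x_n\V\to L\le 1$. If $L\le 1-\lambda$, the triangle inequality already gives $\limsup_n\V x_n+\lambda x\V\le 1$. Otherwise, for $\lambda\le 1/2$, apply (b) to $x_n/L$ with parameter $\lambda/L\le 2\lambda$, which yields $\limsup_n\V x_n+\lambda x\V\le 1+\lambda F_k(\lambda/L)$.
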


\medskip

When  $\mathcal{P}$ is the  projection scheme given in Example \ref{Sch}, one readily verifies that:

\begin{itemize}

\item[(1)] The norm is  sequentially separating if and only if $X$ is of $\mathcal{P}$ asymptotically $\ell_1$-type.

\item[(2)] The norm is premonotone if and only if $\Vert I-P_k\Vert \leq 1$ for every $k\in \mathbb{N}$.

\item[(3)] Condition (3) in Definition \ref{ss} holds if $\Delta(P)<1/(1+2{K}(\mathcal{P}))$.

\item[(4)]  Finally, $X$ is $\mathcal{P}$-AUC, since all projections $P_k$ have finite-dimensional rank.

\end{itemize}

Thus, Theorem \ref{main} recovers and extends the results of \cite{todos} when the projection scheme arises from the canonical projections associated with a Schauder basis.
Nevertheless, as  proved in \cite[Section 7]{alfredo}, a Banach space with a Schauder basis whose norm is sequentially separating has the Schur property and  it is therefore hereditarily $\ell_1$. The former means that every infinite-dimensional subspace contains an isomorphic copy of $\ell_1$. By using general projection schemes, 
we can now assert that fulfilling the Schur property or being hereditarily $\ell_1$ are not necessary conditions for a nonreflexive Banach space to have the FPP. In fact,  Corollary \ref{uciso} shows that every uniformly convex Banach space is isometrically embedded into a nonreflexive Banach space with the FPP. Further examples are provided in next section for the class of variable Lebesgue spaces.

\section{ The FPP for Lebesgue spaces with variable exponent}\label{VLSsection}

In this section, we apply Theorem \ref{main} to variable Lebesgue spaces. Recall that every Banach space containing an isometric copy of $\ell_1$ fails the FPP. 
We will obtain a complete characterization of those variable Lebesgue spaces having the fixed point property: namely, those that precisely do not contain an isometric copy of $\ell_1$. 

\medskip

As explained in Section \ref{pre}, variable Lebesgue spaces constitute a class of Banach function spaces introduced by Nakano \cite{N} and subsequently developed by Orlicz and Musielak \cite{MO}. We refer to Section \ref{pre} for the definition of variable Lebesgue spaces and the notation used throughout this section.
\medskip

We  begin by identifying the situations where the fixed point property necessarily fails.

\begin{lemma} \label{failure} Let $(\Omega,\Sigma,\mu)$ be a $\sigma$-finite  measure space and $p: \Omega\to [1,\infty]$
a
measurable function. Assume that one of the following conditions holds:
\begin{itemize}
\item[(i)]  $p_+(\Omega\setminus \Omega_\infty)=+\infty$;

\item[(ii)]
 The set $\Omega_1$
 either contains a nonatomic set with positive measure or
infinitely many atoms;

\item[(iii)]

 The set $\Omega_\infty$
 either contains a nonatomic set with positive measure
or infinitely many atoms .

\end{itemize}
Then $L^{p(\cdot)}(\Omega)$ fails to have the FPP.\end{lemma}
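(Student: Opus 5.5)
In each of the three cases I will exhibit an isometric copy of $\ell_1$ inside $L^{p(\cdot)}(\Omega)$, or of a space already known to fail the FPP, and then use the standard fact that the FPP passes to closed subspaces. In particular, a space containing an isometric copy of $\ell_1$ fails the FPP, since $\ell_1$ itself carries a fixed-point-free nonexpansive self-map of a closed bounded convex set. The key tool is that $L^{p(\cdot)}(\Omega)$ is a Banach lattice with the Luxemburg norm. Disjointly supported functions $f_n$ restricted to a set where $p\equiv 1$ are therefore an $\ell_1$-sum, because the modular is additive and linear there.

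\textbf{Case (ii).} If $\Omega_1$ contains a nonatomic part of positive measure, the restriction of the space to it is isometric to $L^1(E)$. This contains $\ell_1$ isometrically (normalized indicators of disjoint sets), so the FPP fails. If $\Omega_1$ contains infinitely many atoms $a_n$, the functions $e_n=\chi_{a_n}/\mu(a_n)$ span $\ell_1$ isometrically. Indeed, for $f=\sum t_n e_n$ we get $\rho(f/\alpha)=\sum|t_n|/\alpha$, so $\Vert f\Vert=\sum|t_n|$.

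\textbf{Case (iii).} On $\Omega_\infty$ the norm is the $L^\infty$ norm. A nonatomic part of positive measure, or infinitely many atoms, yields an isometric copy of $\ell_\infty$, which in turn contains $\ell_1$ isometrically. Alternatively, $c_0$ sits inside as disjoint indicators, and $c_0$ fails the FPP. Either way the FPP fails.

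\textbf{Case (i).} This is the main obstacle, since no set has $p=1$ or $p=\infty$. Here $p$ is unbounded on $\Omega\setminus\Omega_\infty$, so I choose disjoint sets $E_n$ of positive finite measure with $p>n$ on $E_n$ and $p_n:=p_-(E_n)\to\infty$ fast. The aim is to show that $\operatorname{span}\{\chi_{E_n}/\Vert\chi_{E_n}\Vert\}$ contains an isometric copy of $c_0$, or at least an asymptotically isometric copy of $c_0$ or $\ell_1$. Known results of Dowling–Lennard type say such a copy already forces failure of the FPP, and I would invoke them. The estimate to check is that for normalized disjoint $f_n$ supported on $E_n$ one has $\max|t_n|\le\Vert\sum t_nf_n\Vert\le(1+\epsilon_n)\max|t_n|$, because $\rho(f/\alpha)\le\sum (\max|t|/\alpha)^{p_n}$ is small for $\alpha$ slightly above $\max|t|$. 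The hardest part is controlling this uniformly, and it is the step where the argument could fail. I would first try to pass to an atomless-or-atom refinement and choose $p_n$ so that $\sum_n(1+\epsilon_n)^{-p_n}<\infty$ with $\epsilon_n\to0$. This yields asymptotically isometric $c_0$, and the known theorem then gives failure of the FPP (already of the w-FPP, as in \cite{DJ}).
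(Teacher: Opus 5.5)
Your cases (ii) and (iii) are exactly the paper's argument. For case (i) you take a genuinely different route. The paper cites \cite[Theorem 2.3]{DJ} for an \emph{isometric} copy of $\ell_\infty$, hence of $\ell_1$. You instead build a copy of $c_0$ from disjoint normalized indicators $f_n=\chi_{E_n}/\|\chi_{E_n}\|$ and invoke Dowling--Lennard--Turett. This works, but the justification you wrote is wrong as stated: for $\alpha$ slightly above $\max_n|t_n|$, the sum $\sum_n(\max|t|/\alpha)^{p_n}$ is of the order of the number of nonzero coordinates, not small. The asymptotically isometric estimate must be coordinatewise. Take $\alpha=\sup_n(1+\epsilon_n)|t_n|$, so that
\[
\rho\Big(\frac1\alpha\sum_n t_nf_n\Big)\le\sum_n\Big(\frac{|t_n|}{\alpha}\Big)^{p_n}\rho(f_n)\le\sum_n(1+\epsilon_n)^{-p_n},
\]
and make this series $\le 1$, not merely finite; for instance $\epsilon_n=1/n$ and $p_n\ge n^2$ give a bound of $\sum_n 2^{-n}=1$. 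Then $\sup_n|t_n|\le\|\sum_n t_nf_n\|\le\sup_n(1+\epsilon_n)|t_n|$, and $f_n/(1+\epsilon_n)$ satisfies the DLT definition of an asymptotically isometric copy of $c_0$. Strictly, even a fixed $\epsilon$ would do, since Dowling, Lennard and Turett also proved that any \emph{isomorphic} copy of $c_0$ forces failure of the FPP. Drop the remark about the w-FPP, though: $c_0$ itself has the w-FPP, so a copy of $c_0$ cannot by itself force its failure.

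Here is what each route buys. Yours proves the lemma as stated from an elementary computation plus a known fixed point theorem. In case (i), however, it does not show that $\ell_1$ embeds isometrically. That is what the paper's proof actually establishes (``We will check that $\ell_1$ is contained isometrically''), and it is what Corollary~\ref{co} uses for 2)$\Rightarrow$3). Your sets give this with one more step:
\begin{itemize}
\item take $p$ bounded on each $E_n$ (so $\rho(f_n)=1$) and $p_-(E_n)\ge n^2$;
\item split $\mathbb{N}$ into infinitely many infinite sets $N_j$;
\item put $h_j=\sum_{n\in N_j}(1-\frac1n)f_n$.
\end{itemize}
Then $\rho(h_j/\alpha)=\infty$ for every $\alpha<1$, while $\sum_j\rho(h_j)\le\sum_n(1-\frac1n)^{n^2}\le\sum_n e^{-n}<1$. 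Hence $\|h_j\|=1$ and $\|\sum_j t_jh_j\|=\sup_j|t_j|$ for all $(t_j)\in\ell_\infty$. This is an isometric copy of $\ell_\infty$, and hence of $\ell_1$, obtained without any appeal to DLT.
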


\begin{proof}
We will check that $\ell_1$  is contained isometrically
in $L^{p(\cdot)}(\Omega)$ whenever one of the above assertions holds. Indeed,
an isometric copy of $\ell_\infty$  can be found in case that (i) holds (see \cite[Theorem 2.3]{DJ}) and $\ell_\infty$  contains an
isometric copy of every separable Banach space. When (ii) holds, an isometric
copy of $L_1([0,1])$ can be given whenever a nonatomic set with positive measure exists
or, simply,  an isometric copy of $\ell_1$ can be formed with infinitely many atoms.
Likewise, we arrive at the same conclusion whenever (iii) holds.\end{proof}

Ruling out  the possible cases where the failure of the FPP is known,  in the rest of this section we are going to
assume  that  $p_+(\Omega\setminus \Omega_\infty) <\infty$ and the set $\Omega_1\cup\Omega_\infty$ only contains finitely many atoms.
Under this setting, we are going to construct a projection scheme for variable Lebesgue spaces that satisfies the requirement of Theorem \ref{main}. In order to do that, we fix a sequence $(\gamma_n)_n$  in $(1,\infty)$ decreasing to 1.

\begin{definition} \label{PSV}  Assume that  $p_+(\Omega\setminus \Omega_\infty) <\infty$ and the set $\Omega_1\cup\Omega_\infty$ only contains finitely many atoms.
Define $P_n(f)=f\cdot 1_{\{\gamma_n\leq p\le \infty\}}$ for all $n\in\mathbb{N}$  and $P_\omega(f)=f\cdot 1_{\{1< p\le \infty\}}$.
\end{definition}

The remaining of this section is devoted to the proof that $\mathcal{P}=\{P_n: 1\le n\le \omega\}$ defines a projection scheme on $L^{p(\cdot)}(\Omega)$ verifying the hypotheses of Theorem \ref{main}.

\begin{lemma} $\mathcal{P}=\{P_n: 1\le n\le \omega\}$ is a  premonotone projection scheme which is boundedly complete.

 \end{lemma}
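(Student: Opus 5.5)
We verify the three conditions of Definition \ref{PS}, then premonotonicity, and finally bounded completeness. Throughout, write $E_n:=\{\gamma_n\le p\le\infty\}$ and $E_\omega:=\{1<p\le\infty\}$. Since $(\gamma_n)$ decreases to $1$, we have $E_n\subset E_{n+1}\subset E_\omega$ and $\bigcup_n E_n=E_\omega$. Each $P_n$ is multiplication by an indicator, so it is a linear projection. Because the Luxemburg norm is a lattice norm, we get $\|f1_E\|\le\|f\|$, hence $\|P_n\|\le1$. Condition a) holds because $1_{E_n}1_{E_m}=1_{E_{\min\{n,m\}}}$, with $E_\omega$ the largest set.

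For condition c), note that $I-P_\omega$ is multiplication by $1_{\Omega_1}$. By the standing assumption ($\Omega_1$ contains only finitely many atoms and we are in the excluded-failure setting), $\Omega_1$ is, up to a null set, a finite union of atoms $a_1,\dots,a_r$. Functions are constant on atoms, so the range of $I-P_\omega$ is spanned by $1_{a_1},\dots,1_{a_r}$ and is finite dimensional. Here I should double check that the standing hypotheses indeed force $\Omega_1$ to be purely atomic with finitely many atoms. Lemma \ref{failure}(ii) rules out a nonatomic part, and this is how I read the hypothesis.

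Condition b) is $\|f1_{E_\omega\setminus E_n}\|\to0$. Since $p_+(\Omega\setminus\Omega_\infty)<\infty$, Lemma \ref{properties}(2) lets us check modular convergence instead. The sets $E_\omega\setminus E_n$ are contained in $\{1<p<\gamma_n\}$, which avoids $\Omega_\infty$, so
\[
\rho\big(\lambda f1_{E_\omega\setminus E_n}\big)=\int_{\{1<p<\gamma_n\}}|\lambda f|^{p(t)}\,d\mu .
\]
For any fixed $\lambda$ this integrand is integrable, because $\rho(\lambda f)<\infty$ when $p_+<\infty$ off $\Omega_\infty$. The sets decrease to the empty set, so dominated convergence gives the limit $0$.

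Premonotonicity is immediate: $I-P_n$ is multiplication by $1_{\Omega\setminus E_n}$, so $\|I-P_n\|\le1$ by the lattice property.

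The main step is bounded completeness. Take a bounded sequence $(f_n)$ with $\|f_n\|\le M$ and $P_n f_m=f_n$ for $n\le m$. Then $f_n$ is supported in $E_n$ and $f_m=f_n$ on $E_n$. Define $f$ by $f=f_n$ on $E_n$; this is well defined, measurable, and supported in $E_\omega$. Then $|f_n|\uparrow|f|$ pointwise. By monotone convergence applied to the integral part, and monotonicity of the essential supremum over $\Omega_\infty\subset E_1$, we get
\[
\rho(f/M)=\lim_n\rho(f_n/M)\le1 .
\]
Here $\rho(f_n/M)\le1$ follows from Lemma \ref{properties}(1). Hence $f\in L^{p(\cdot)}$ with $\|f\|\le M$, $f=P_\omega f\in P_\omega(X)$, and $P_nf=f1_{E_n}=f_n$, which is what bounded completeness requires. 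Alternatively, this is just the Fatou property of the Luxemburg norm.
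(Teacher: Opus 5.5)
Your proof is correct and follows the same route as the paper. You get finite rank of $I-P_\omega$ from the finitely many atoms in $\Omega_1$, condition b) from modular convergence of $f1_{\{1<p<\gamma_n\}}$ to $0$ together with Lemma \ref{properties}, premonotonicity from the lattice property, and bounded completeness by gluing the $f_n$ and applying monotone convergence to the modular. You are also slightly more careful than the paper: you use the correct support set $\{\gamma_n\le p\le\infty\}$, you note that the $\Omega_\infty$ part of the modular is constant in $n$, and you explicitly check that $f\in P_\omega(X)$ and $P_nf=f_n$.
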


\begin{proof}

It is clear that $P_n\circ P_m=P_{\min\{n,m\}}$ if $n<m\le \omega$ and that $I-P_\omega$ has rank in a finite-dimensional closed subspace, since $\Omega_1$ is formed  by just finitely many atoms at most.
To prove $\lim_n \V P_n(f)-P_\omega(f)\V=\lim_n \V f\cdot 1_{\{1<p<\gamma_n\}}\V=0$,  observe that $(f\cdot 1_{\{1<p<\gamma_n\}})_n$ is a monotonous sequence converging to zero almost everywhere. Since $p_+(\Omega\setminus \Omega_\infty) <\infty$  the convergence for the modulus implies the convergence for the norm. It is trivial that $\V I-P_n\V\le 1$ for all $n\in\mathbb{N}$.

\medskip

 To prove bounded completeness, consider a bounded sequence  $(f_n)_n$  in  $L^{p(\cdot)}(\Omega)$, say $\Vert f_n\Vert \leq M$ for all $n\in \mathbb{N}$, such that $P_n(f_m)=f_n$ for $m\geq n$.  In particular $P_n(f_n)=f_n$ which implies that $supp(f_n)\subset \{\gamma_n<p\le \infty\}$.
The above implies that, defining $f(t)=f_n(t)$ whether $p(t)\geq \gamma_n$, we obtain a well-defined measurable function on $\{1<p\le +\infty\}$.
Moreover, the sequence $\{P_n(f)\}_n=\{f \cdot 1_{\{\gamma_n<p\le +\infty\}}\}_n$  is nondecreasing and convergent to $f$ almost everywhere. From the Monotone Convergence Theorem and Lemma \ref{properties} we obtain   $$\rho(f/M)=\lim_n \rho(P_n(f)/M)\le \limsup_n\rho(f_n/M) \leq 1$$ and so $f$ belongs to  $L^{p(\cdot)}(\Omega)$.

\end{proof}

\begin{lemma} \label{kUC2}
$L^{p(\cdot)}(\Omega)$ is $\mathcal{P}$-AUC
\end{lemma}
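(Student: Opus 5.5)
For each $n\in\mathbb{N}$ I split $\{\gamma_n\le p\le\infty\}$ into the part where $p$ is finite and the part $\Omega_\infty$, and set
$$A(P_n)(f):=f\cdot 1_{\{\gamma_n\le p<\infty\}},\qquad B(P_n)(f):=f\cdot 1_{\Omega_\infty}.$$
Both are multiplication by indicators of disjoint sets, so they are projections with $P_n=A(P_n)+B(P_n)$. Under the standing assumption, $\Omega_\infty$ consists of at most finitely many atoms. On an atom a measurable function is a.e. constant, so $B(P_n)$ has finite rank, independent of $n$. It therefore remains to show that $L^{p(\cdot)}(\Omega)$ is $A_n$-UC for each $n$, where $A_n:=A(P_n)$ and $E_n:=\{\gamma_n\le p<\infty\}$. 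On $E_n$ we have $\gamma_n\le p\le p_+(\Omega\setminus\Omega_\infty)<\infty$, so the restricted space $L^{p(\cdot)}(E_n)$ satisfies the hypotheses of Lemma \ref{ucc2}.

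Fix $\epsilon>0$ and $f,g\in B_X$ with $\V A_n(f-g)\V\ge\epsilon$. Write $\rho=\rho_{E_n}+\rho_{E_n^c}$, which holds because the modular is additive over disjoint sets; the $\Omega_\infty$ part is an ess sup, but it lives in $E_n^c$, and there $\rho_{E_n^c}$ is still convex. The functions $u=f1_{E_n}$ and $v=g1_{E_n}$ have norm at most $1$ and satisfy $\V u-v\V\ge\epsilon$. Lemma \ref{ucc2}, applied in $L^{p(\cdot)}(E_n)$ with exponent bounds $\gamma_n$ and $p_+$, gives some $\alpha=\alpha(\epsilon,n)>0$ with
$$\rho_{E_n}\Big(\tfrac{u+v}{2}\Big)\le \tfrac{\rho_{E_n}(u)+\rho_{E_n}(v)}{2}-\alpha .$$
Convexity on $E_n^c$ then yields
$$\rho\Big(\tfrac{f+g}{2}\Big)\le \tfrac{\rho(f)+\rho(g)}{2}-\alpha\le 1-\alpha,$$
using $\rho(f),\rho(g)\le1$, which follows from Lemma \ref{properties}(1) since $p_+(\Omega\setminus\Omega_\infty)<\infty$.

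The main obstacle is converting the modular gap $1-\alpha$ into a norm gap $\V (f+g)/2\V\le 1-\gamma$. I would argue as follows. Put $h=(f+g)/2$ and $\lambda=\V h\V\le1$; assume $\lambda>0$, since otherwise there is nothing to prove. By Lemma \ref{properties}(1), $\rho(h/\lambda)=1$. The ess-sup term is homogeneous of degree one and every other exponent is at most $p_+$, so $\rho(h)\ge\lambda^{p_+}\rho(h/\lambda)=\lambda^{p_+}$ (this is where $\lambda\le1$ is used). Hence $\lambda^{p_+}\le1-\alpha$, that is, $\V h\V\le(1-\alpha)^{1/p_+}=:1-\gamma(A_n)$, which is the required dichotomy in Definition \ref{kUC}.

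A secondary technical point is that Lemma \ref{ucc2} is stated for the whole space with $p_->1$, so I apply it to $L^{p(\cdot)}(E_n)$ regarded as a space over $E_n$ with the restricted measure. If $\mu(E_n)=0$, then $A_n=0$ and the $A_n$-UC condition holds trivially. Putting these steps together shows that $X$ is $\mathcal{A}$-UC for $\mathcal{A}=\{A(P_n):n\in\mathbb{N}\}$ with each $B(P_n)$ of finite rank, so $L^{p(\cdot)}(\Omega)$ is $\mathcal{P}$-AUC.
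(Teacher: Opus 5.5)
Your proof is correct and follows the paper's argument. You use the same splitting $A_n(f)=f\cdot 1_{\{\gamma_n\le p<\infty\}}$, $B_n(f)=f\cdot 1_{\Omega_\infty}$, apply Lemma \ref{ucc2} on $\{\gamma_n\le p<\infty\}$ together with additivity of $\rho$ to get $\rho\bigl(\frac{f+g}{2}\bigr)\le 1-\alpha$, and then convert this to the norm bound $\bigl\|\frac{f+g}{2}\bigr\|\le(1-\alpha)^{1/p_+}$; your justification of that last step via $\rho(h)\ge\lambda^{p_+}\rho(h/\lambda)$ fills in what the paper only asserts, and the paper's corresponding display contains a typo.
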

\begin{proof}

Notice that for all $n\in\mathbb{N}$ we can write $P_n=A_n+B_n$ where $A_n(f)=f\cdot 1_{\{\gamma_n\leq p< \infty\}}$ and $B_n(f)=f \cdot 1_{\Omega_\infty}$. It is clear that $B_n$ has finite-rank for every $n\in\mathbb{N}$. Let us prove that $L^{p(\cdot)}(\Omega)$ is $\mathcal{A}$-UC where $\mathcal{A}=\{A_n: n\in\mathbb{N}\}$.

\medskip

 Let  $\rho_n:L^{p(\cdot)}(\Omega)\to [0,\infty]$ be the modular defined by
$$\rho_n(f)=\rho(A_n(f))=\int_{\{\gamma_n\le p <+\infty\}} \vert f(t)\vert^{p(t)}dt.$$ Since $p_+(\Omega\setminus \Omega_\infty)<\infty$,  we can use the uniform convexity of the  modular $\rho_n$ given by Lemma \ref{ucc2}. Hence, for a given  $\delta>0$, there exists some $\alpha_n>0$ (depending on $\gamma_n, \delta $ and $p_+(\Omega\setminus \Omega_\infty))$ such that
$$
\rho_n\left(\frac{f+g}{ 2}\right)\le \frac{\rho_n(f)+\rho_n(g)}{ 2}-\alpha_n
$$
whenever $\Vert A_n(f-g)\Vert>\delta$ and $\Vert A_n(f)\Vert\le \Vert f\Vert \le 1$ and $\Vert A_n(g)\Vert\le \Vert g\Vert\le 1$.

\vskip 0.5cm
Using the additivity of the modular   and Proposition \ref{properties} we deduce
$$
\rho\left(\frac{f+g}{ 2}\right)\le \frac{\rho(f)+\rho(g)}{2}-\alpha_n\le 1-\alpha_n.
$$
and
$$\left\V \frac{f+g}{ 2}\right\V\le (1-\alpha_n)^{\frac{1}{ p_+(\Omega\setminus \Omega_\infty)}}=1-(1-\alpha_n)^{\frac{1}{ p_+(\Omega\setminus \Omega_\infty)}},
$$
and the proof is complete.

\end{proof}

Our next goal is to show that $L^{p(\cdot)}(\Omega)$ is of $\mathcal{P}$-asymptotically $\ell_1$-type and that $\Delta(\mathcal{P})<{1\over 3}$ since ${K}(\mathcal{P})=1$.
To this end, we first establish a simple lemma that will be used in the proof of Lemma \ref{SS2}:

\begin{lemma}\label{gammaa} Assume $1 < \gamma<\infty$ and let $f,g\in L^{p(\cdot)}(\Omega)$  be normalized disjointedly supported functions with support in $\{1\le p\leq \gamma\}$. Then:
$$
\V f+g\V\ge 2^{\frac{1}{\gamma}}
$$
\end{lemma}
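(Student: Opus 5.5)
We argue directly with the modular, using Lemma \ref{properties}, which applies since $p_+(\Omega\setminus\Omega_\infty)<\infty$ is standing in this section. The supports of $f$ and $g$ lie in $\{1\le p\le\gamma\}$, which is disjoint from $\Omega_\infty$. Hence the essential-supremum part of $\rho$ plays no role and
$$
\rho(h)=\int_{\{1\le p\le \gamma\}}|h(t)|^{p(t)}\,d\mu
$$
for every $h$ supported there. By Lemma \ref{properties}(1) and the normalization, $\rho(f)=\rho(g)=1$.

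The plan is to compute $\rho\big((f+g)/\lambda\big)$ for $\lambda=2^{1/\gamma}$ and show it is at least $1$. Set $c=1/\lambda\le1$. Since $f$ and $g$ are disjointly supported, the modular is additive:
$$
\rho\big(c(f+g)\big)=\rho(cf)+\rho(cg).
$$
On the support we have $p(t)\le\gamma$ and $0<c\le 1$, so $c^{p(t)}\ge c^{\gamma}=1/2$. Therefore
$$
\rho(cf)=\int c^{p(t)}|f|^{p(t)}\,d\mu\ \ge\ \tfrac12\,\rho(f)=\tfrac12,
$$
and similarly $\rho(cg)\ge\tfrac12$. Adding these gives $\rho\big((f+g)/2^{1/\gamma}\big)\ge1$.

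It remains to turn this into a norm bound. If $\|f+g\|<2^{1/\gamma}$, then, since $\rho$ is strictly increasing along rays on this support (where $p$ is finite), we would get
$$
\rho\Big(\frac{f+g}{2^{1/\gamma}}\Big)<\rho\Big(\frac{f+g}{\|f+g\|}\Big)=1,
$$
using Lemma \ref{properties}(1). This contradicts the estimate above. Strict monotonicity holds because for $0<a<b$ we have $a^{p(t)}<b^{p(t)}$ pointwise on the support of $f+g$, and that support has positive measure. A lighter route is to invoke only the equivalence $\rho(h)\le1\iff\|h\|\le1$: the inequality $\rho\big((f+g)/\lambda\big)\ge1$ then yields $\|f+g\|\ge\lambda$ after a short limiting argument with $\lambda'<\lambda$, using strictness as above. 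This gives $\|f+g\|\ge 2^{1/\gamma}$.

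The only delicate point is this last step, passing from a modular inequality to a norm inequality, since $\rho\ge1$ alone gives only $\|\cdot\|\ge1$ if $\rho$ could be flat. The finiteness of $p$ on the support and the positive measure of that support rule out flatness. Atoms in $\Omega_1$ cause no issue, because $p=1\le\gamma$ there and the same pointwise bound $c^{p(t)}\ge c^\gamma$ applies.
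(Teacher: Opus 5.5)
Your proof is correct and follows the paper's argument: additivity of the modular on disjoint supports, together with the pointwise bound $c^{p(t)}\ge c^{\gamma}$ on $\{1\le p\le\gamma\}$, gives $\rho\bigl((f+g)/2^{1/\gamma}\bigr)\ge 1$, and Lemma \ref{properties} then turns this into $\|f+g\|\ge 2^{1/\gamma}$. Your strict-monotonicity argument for that last modular-to-norm step is valid, and it spells out what the paper simply attributes to Lemma \ref{properties}.
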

\begin{proof}
Write $r=2^{1/\gamma}$. Since $r>1$ we have
$$ \rho\left(\frac{g}{r}\right)=\int_\Omega \left(\frac{1}{r}\right)^{p(t)} \vert g(t)\vert^{p(t)} d\mu \geq  \left(\frac{1}{r}\right)^\gamma\int_\Omega \vert g(t)\vert^{p(t)} d\mu
=\left(\frac{1}{r}\right)^\gamma.$$
Likewise $\rho(f)\geq  \left(\frac{1}{r}\right)^\gamma$ and
since $f$ and $g$ are disjointly supported we have
$$
\rho\left(\frac{f+g}{ r}\right)=\rho\left(\frac{f}{ r}\right) +\rho\left(\frac{g}{ r}\right)\geq
\left(1 \over r\right)^{\gamma} +\left(1\over r\right)^{\gamma}=1
$$
and, using Lemma \ref{properties},  the proof is complete.
\end{proof}

\begin{lemma}\label{SS2}  $L^{p(\cdot)}(\Omega)$  is  of $\mathcal{P}$-asymptotically $\ell_1$-type.
\end{lemma}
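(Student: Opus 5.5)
We have to show $S(P_k)\to 2$. Fix $k$, a sequence $(f_n)\in\mathcal{S}_1(X)$ and $g\in S_X$ with $P_k(g)=0$. Then $g$ is supported in $\{1\le p<\gamma_k\}$. Here $\Omega_1$ consists of finitely many atoms and $(I-P_\omega)$ has finite rank. Since $\mathcal{P}$-$\lim_n f_n=0$, we have $\|f_n 1_{\Omega_1}\|\to 0$ and $\|f_n1_{\{p\ge\gamma_j\}}\|\to 0$ for every $j$. Hence the mass of $f_n$ eventually lives on $\{1<p<\gamma_j\}$ for every $j$. The plan is to approximate $f_n+g$ by a sum of two disjointly supported functions both carried by $\{1\le p<\gamma_k\}$, and then apply Lemma \ref{gammaa}.

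First, approximate $g$ by a function with small support tail. By Definition \ref{PS}\,b), $\|(P_\omega-P_j)(g)\|=\|g1_{\{1<p<\gamma_j\}}\|\to 0$ as $j\to\infty$. Fix $\eta>0$ and choose $j>k$ with $\|g1_{\{1<p<\gamma_j\}}\|<\eta$. Set $g'=g1_{\Omega_1\cup\{\gamma_j\le p<\gamma_k\}}$, so that $\|g-g'\|<\eta$. Next, set $f_n'=f_n1_{\{1<p<\gamma_j\}}$. Then $\|f_n-f_n'\|\le\|f_n1_{\Omega_1}\|+\|P_j f_n\|\to0$. Now $f_n'$ and $g'$ are disjointly supported, both in $\{1\le p\le\gamma_k\}$, with $\limsup_n\|f_n'\|=1$ and $\|g'\|\ge1-\eta$.

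Lemma \ref{gammaa} is stated for normalized functions, so we need a non-normalized version. For disjoint $u,v$ supported in $\{p\le\gamma\}$, the modular is additive, and for $\lambda\le1$ we have $\rho(\lambda u)\ge\lambda^{\gamma}\rho(u)$. From this I would derive
\[
\|u+v\|\ge \bigl(\|u\|^{\gamma}+\|v\|^{\gamma}\bigr)^{1/\gamma}.
\]
To see this, put $s=(\|u\|^\gamma+\|v\|^\gamma)^{1/\gamma}$ and note $\rho(u/s)\ge (\|u\|/s)^\gamma\rho(u/\|u\|)=(\|u\|/s)^\gamma$ by Lemma \ref{properties}(1), and similarly for $v$. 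Summing gives $\rho((u+v)/s)\ge 1$, hence $\|u+v\|\ge s$. A limiting argument handles $\limsup$. This is the same computation as in Lemma \ref{gammaa}, and I expect it to be routine. Applying it with $\gamma=\gamma_k$ gives
\[
\limsup_n\|f_n+g\|\ge \limsup_n\|f_n'+g'\|-\eta\ge \bigl(1+(1-\eta)^{\gamma_k}\bigr)^{1/\gamma_k}-\eta.
\]
Letting $\eta\to0$ yields $S(P_k)\ge 2^{1/\gamma_k}$. Since $\gamma_k\downarrow1$, we get $S(\mathcal{P})=\lim_k S(P_k)=2$.

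The main obstacle is the support bookkeeping. I expect two points to need care. The first is to check that $\mathcal{P}$-null really kills the part of $f_n$ on $\Omega_1$, which uses the $(I-P_\omega)$ component of $\mathcal{P}$-convergence together with the finite rank on the atoms. The second is to make sure no part of $f_n$ lies on $\Omega_\infty$, which is included in $\{p\ge\gamma_j\}$ and is therefore controlled by $P_j f_n\to 0$. Once everything is disjoint and carried by $\{p\le\gamma_k\}$, the modular estimate above finishes the proof.
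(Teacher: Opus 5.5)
Your proof is correct. It follows the paper's strategy: reduce to the disjoint-support estimate of Lemma \ref{gammaa} on $\{1\le p\le\gamma_k\}$, obtain $S(P_k)\ge 2^{1/\gamma_k}$, and let $k\to\infty$.

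It is in fact more careful than the paper's argument. The paper applies Lemma \ref{gammaa} directly to $(I-P_k)(f_n)/\|(I-P_k)(f_n)\|$ and $f$. Both are carried by $\{1\le p<\gamma_k\}$, but they need not be disjointly supported. For example, take $f_n=e_n$ in $\ell^{p_n}$ with $p_n\downarrow 1$, and $f$ with infinite support inside $\{p<\gamma_k\}$.

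Your truncation at a deeper level $\gamma_j$ supplies exactly the separation the paper leaves implicit: you use $f_n'=f_n1_{\{1<p<\gamma_j\}}$ and $g'=g1_{\Omega_1\cup\{\gamma_j\le p<\gamma_k\}}$. Your non-normalized inequality
\[
\|u+v\|\ge\left(\|u\|^{\gamma}+\|v\|^{\gamma}\right)^{1/\gamma}
\]
also removes the paper's tacit passage to a subsequence with $\|f_n\|\to1$. That passage is needed to justify $\|f_n-(I-P_k)(f_n)/\|(I-P_k)(f_n)\|\|\to0$.

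One minor remark: $\|f_n1_{\Omega_1}\|\to0$ follows immediately from the $(I-P_\omega)$-component in the definition of $\mathcal{P}$-convergence. The finite rank of $I-P_\omega$ is not needed for that step.
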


\begin{proof} Fix  $k\in \mathbb{N}$. We will prove that $S(P_k)\geq 2^{\frac{1}{\gamma_k}}$ and
 passing to the limit as  $k\to \infty$,
we obtain that $S(\mathcal{P})=2$. Consider any $\{(f_n),f\}\in  \mathcal{S}_1(X) \times S_X$ with $X=L^{p(\cdot)}(\Omega)$ and $P_k(f) = 0$. That means, in particular, that supp$\,(f)\subset \{1<t<\gamma_k\}\cup \Omega_1$ and $\lim_n\V f_n\cdot 1_{\Omega_1}\V=0$ (see Definition \ref{Pconvergence}).

\medskip

Note that  the following equality holds:
$$
\limsup_n\left\V f_n-\frac{(I-P_k)(f_n)}{\Vert (I-P_k)(f_n)\Vert}\right\V=0,
$$
and
$$\limsup_n\Vert  f_n+f\Vert = \limsup_n\left \Vert \frac{(I-P_k)(f_n)}{\Vert (I-P_k)(f_n)\Vert} +f\right\Vert\ge 2^{1\over \gamma_k}
$$
 following the inequality in Lemma \ref{gammaa}.
\end{proof}

\begin{lemma}\label{lambda2}
Given $\gamma>1$, the following holds:  for all normalized $f,g\in L^{p(\cdot)}(\Omega)$  disjointly supported functions with $supp(g)\subset\{\gamma\le p\le \infty \}$ we have
$$
\V f+\lambda g\V\le 1+\lambda^\gamma
$$
for $\lambda\in (0,1)$.

\end{lemma}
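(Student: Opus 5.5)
The approach is to work entirely through the modular. By Lemma \ref{properties}(1) it suffices to show
$$
\rho\left(\frac{f+\lambda g}{1+\lambda^\gamma}\right)\le 1 .
$$
Set $c:=1+\lambda^\gamma\ge 1$. Since $f$ and $g$ are disjointly supported, the integral part of $\rho$ splits additively over the two supports. The essential-supremum part over $\Omega_\infty$ is the maximum of the two essential suprema, which is at most their sum. Hence, in all cases,
$$
\rho\left(\frac{f+\lambda g}{c}\right)\le \rho\left(\frac{f}{c}\right)+\rho\left(\frac{\lambda g}{c}\right).
$$

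I would then estimate the two terms separately.

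\emph{The $f$-term.} Apply convexity $(m3)$ with the second point equal to $0$, using $\rho(0)=0$ from $(m1)$. Together with $\rho(f)=1$ (Lemma \ref{properties}(1), since $\V f\V=1$), this gives $\rho(f/c)\le \rho(f)/c = 1/c$.

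\emph{The $g$-term.} On $\operatorname{supp}(g)\cap(\Omega\setminus\Omega_\infty)$ we have $p(t)\ge\gamma$. Since $0<\lambda/c<1$, this gives $(\lambda/c)^{p(t)}\le(\lambda/c)^{\gamma}$ pointwise. Hence the integral part of $\rho(\lambda g/c)$ is at most $(\lambda/c)^\gamma\int|g|^{p(t)}d\mu \le \lambda^\gamma/c$, using $c^\gamma\ge c$.

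If $g$ vanishes on $\Omega_\infty$, summing the two estimates gives $\rho((f+\lambda g)/c)\le (1+\lambda^\gamma)/c=1$, which finishes the proof.

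The main obstacle is the part of $g$ living on $\Omega_\infty$. There the modular scales only linearly: its contribution is $(\lambda/c)\,\mathrm{ess\,sup}_{\Omega_\infty}|g|$, and $\lambda/c$ is not bounded by $\lambda^\gamma/c$. I would handle this by splitting into subcases.

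If $f$ also has a nontrivial part on $\Omega_\infty$, the ess sup over $\Omega_\infty$ of $|f+\lambda g|/c$ is a maximum rather than a sum. Because $\lambda<1$, the $g$-contribution is then dominated by the $f$-contribution there, and it should be absorbable.

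If $f$ vanishes on $\Omega_\infty$, the crude estimate does not close, and the assertion appears to fail as stated. Take $f$ supported on an atom of $\Omega_1$ and $g$ a normalized function on an atom of $\Omega_\infty$. Then $\rho((f+\lambda g)/c)=(1+\lambda)/c$, so the norm is $1+\lambda$, not at most $1+\lambda^\gamma$. In this case I would expect to need either the extra hypothesis $\operatorname{supp}(g)\subset\{\gamma\le p<\infty\}$, or the weaker conclusion $\V f+\lambda g\V\le 1+\lambda$ on the $\Omega_\infty$ component.

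For the later estimate of $\Delta(\mathcal{P})$ this is harmless. By Definition \ref{Pconvergence}, a $\mathcal{P}$-null sequence tends to zero on each $P_k$-range, in particular on the finitely many atoms of $\Omega_\infty$. So in the application $g$ plays the role of the fixed vector $x$ and $f$ that of the $\mathcal{P}$-null tails $f_n$, and the $\Omega_\infty$ part of $f_n$ is negligible. Accordingly, I would first prove the clean case $\operatorname{supp}(g)\subset\{\gamma\le p<\infty\}$ as above, and then treat the $\Omega_\infty$ part of $g$ by the max/sum distinction just described.
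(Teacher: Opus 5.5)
Your argument in the case where $g$ vanishes on $\Omega_\infty$ is exactly the paper's proof: additivity of $\rho$ over disjoint supports, $\rho(f/r)\le\rho(f)/r$, and $p\ge\gamma$ on the support of $g$. Your counterexample is also correct, and it locates a genuine error in the paper. Its last inequality needs $\rho(\lambda g)\le\lambda^\gamma$, which fails for the $\operatorname{ess\,sup}$ term of $\rho$ because that term is only $1$-homogeneous. So the lemma as stated is false whenever $\Omega_\infty$ contains an atom and $\Omega\setminus\Omega_\infty$ is not null. What is true is the version with $\operatorname{supp}(g)\subset\{\gamma\le p<\infty\}$, or more generally with $\lambda\operatorname{ess\,sup}_{\Omega_\infty}|g|\le\operatorname{ess\,sup}_{\Omega_\infty}|f|$.

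The rescue you sketch, however, does not hold up, for two reasons.

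\emph{The absorption subcase.} The condition $\lambda<1$ does not make the $\Omega_\infty$-part of $\lambda g$ dominated by that of $f$. Take an atom $a\subset\Omega_1$ with $\mu(a)=1$ and atoms $b_1,b_2\subset\Omega_\infty$. Then $f=(1-\eta)1_a+\eta 1_{b_1}$ and $g=1_{b_2}$ are normalized and disjoint, yet $\|f+\lambda g\|=1+\lambda-\eta>1+\lambda^\gamma$ for $0<\eta<\lambda-\lambda^\gamma$.

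\emph{The claim that this is harmless for $\Delta(\mathcal P)$.} This is the more serious error: the application is precisely your counterexample configuration. In the definition of $\Delta_{P_k}$, the fixed vector $x$ ranges over all unit vectors with $P_kx=x$. Under Definition \ref{PSV} this includes $x=1_b$ for an atom $b\subset\Omega_\infty$, while the $\mathcal P$-null tails $f_n$ asymptotically carry nothing on $\Omega_\infty$. The tails concentrate on $\{1<p<\gamma_j\}$ for every $j$, and $\rho(h/\alpha)\ge\alpha^{-\gamma_j}\rho(h)$ for $\alpha\ge1$ and $\operatorname{supp}(h)\subset\{p<\gamma_j\}$. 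Letting $j\to\infty$ gives $\limsup_n\|f_n+\lambda 1_b\|=1+\lambda$ for every $(f_n)\in\mathcal S_1(X)$. Hence $\Delta_{P_k}(\lambda)=\lambda$ and $\Delta(\mathcal P)=1$, so hypothesis (3) of Theorem \ref{main} fails for this scheme. No max/sum case analysis inside the lemma can change that.

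The repair has to be made in the projection scheme. Take $P_nf=f1_{\{\gamma_n\le p<\infty\}}$ and $P_\omega f=f1_{\{1<p<\infty\}}$, so that $\Omega_\infty$ moves into the finite-rank part $I-P_\omega$. Then $P_kx=x$ forces $\operatorname{supp}(x)\subset\{\gamma_k\le p<\infty\}$, and your clean case is all that is needed. The other verifications still go through. $\mathcal P$-AUC holds with $B_n=0$ as in Lemma \ref{kUC2}. In Lemma \ref{gammaa}, an $\Omega_\infty$-part of the fixed vector causes no trouble, since $r^{-1}\ge r^{-\gamma}$ for $r>1$.
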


\begin{proof}
Since $p_+(\Omega\setminus \Omega_\infty)<\infty$,   we have $\rho(f)=1=\rho(g)$ according to Lemma \ref{properties}. Set $r:=1+\lambda^\gamma>1$. Then, we have
$$
\rho\left(\frac{f+\lambda g}{r}\right)=\rho\left(\frac{f}{ r} \right)+\rho\left(\frac{\lambda g}{r}\right)\le
 \frac{1}{ r}\rho(f)+ \frac{1}{ r}\rho(\lambda g)\le  \frac{1}{ r}(1+\lambda^\gamma)   =1.
$$

\end{proof}

\vskip 0.5cm


\begin{lemma} For all $k\in \mathbb{N}$, we have $\Delta_{P_k}(\lambda)\leq \lambda^{\gamma_k}$. In particular  $\Delta(\mathcal{P})=0$.
\end{lemma}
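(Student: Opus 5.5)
Fix $k\in\mathbb{N}$, $\lambda\in(0,1)$, a sequence $(f_n)\in\mathcal{S}_1(X)$ and $f\in S_X$ with $P_k(f)=f$, so that ${\rm supp}(f)\subset\{\gamma_k\le p\le\infty\}$. The plan is to reduce $f_n$ to functions disjoint from $f$ and then apply Lemma \ref{lambda2} with $\gamma=\gamma_k$.

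\emph{Step 1 (reduction).} Since $\mathcal{P}$-$\lim_n f_n=0$, we have $\|P_k(f_n)\|\to 0$ and $\|(I-P_\omega)(f_n)\|\to0$. Hence
\[
\limsup_n\|f_n+\lambda f\|=\limsup_n\|(I-P_k)(f_n)+\lambda f\|,\qquad \limsup_n\|(I-P_k)(f_n)\|=\limsup_n\|f_n\|=1 .
\]
The function $g_n:=(I-P_k)(f_n)$ is supported in $\{p<\gamma_k\}$, so it is disjoint from ${\rm supp}(f)$.

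\emph{Step 2 (disjoint estimate).} Pass to a subsequence realizing the $\limsup$, on which $c_n:=\|g_n\|\to1$ and $c_n>0$. Write $g_n+\lambda f=c_n\bigl(g_n/c_n+(\lambda/c_n)f\bigr)$. Lemma \ref{lambda2} applies to the normalized disjoint pair $g_n/c_n,\ f$ with $\gamma=\gamma_k$ and gives $\|g_n+\lambda f\|\le c_n\bigl(1+(\lambda/c_n)^{\gamma_k}\bigr)$, provided $\lambda/c_n<1$. Letting $n\to\infty$ yields $\limsup_n\|f_n+\lambda f\|\le 1+\lambda^{\gamma_k}$. Taking the supremum over all admissible pairs gives $\Delta_{P_k}(\lambda)\le\lambda^{\gamma_k}$.

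\emph{Potential gap.} The proof of Lemma \ref{lambda2} uses $\rho(g)=1$. If part of ${\rm supp}(f)$ lies in $\Omega_\infty$, then Lemma \ref{properties}(1) must be applied with care: the $\operatorname{ess\,sup}$ part of $\rho$ is not additive over disjoint supports. However, for disjoint supports $\rho$ of a sum splits into the integral parts plus the sup part. Since $\rho(h/r)\le\rho(h)/r$ for $r\ge1$ by convexity, and $\rho(\lambda f)\le\lambda^{\gamma_k}$ still holds (on $\Omega_\infty$ the term is $\lambda\cdot{\rm ess\,sup}|f|$, which should be checked), it is safest to invoke Lemma \ref{lambda2} as stated, since the paper asserts it for supports in $\{\gamma\le p\le\infty\}$.

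\emph{Conclusion.} Since $\gamma_k>1$,
\[
\frac{\Delta_{P_k}(\lambda)}{\lambda}\le\lambda^{\gamma_k-1}\to0 \quad\text{as }\lambda\to0^+,
\]
so every $\limsup$ in the definition of $\Delta(\mathcal{P})$ vanishes and $\Delta(\mathcal{P})=0$.

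The main obstacle is the rescaling step in Step 2, i.e.\ applying Lemma \ref{lambda2} to non-normalized $g_n$ with a slightly perturbed $\lambda$. This is handled by the continuity argument above, using that $\lambda/c_n\to\lambda<1$.
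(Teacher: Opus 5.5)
\textbf{The step that fails.} Your Steps 1 and 2 follow the paper's own route: drop $P_k(f_n)$ and $(I-P_\omega)(f_n)$, then apply Lemma \ref{lambda2} to the disjoint pair $(I-P_k)(f_n)$, $f$. The difficulty you flag under ``Potential gap'' is a real failure, though, and deferring to Lemma \ref{lambda2} ``as stated'' does not close it, because that lemma is false when ${\rm supp}(g)$ meets $\Omega_\infty$. Its proof uses $\rho(\lambda g)\le\lambda^{\gamma}\rho(g)$. This holds on $\{\gamma\le p<\infty\}$, but not on $\Omega_\infty$, where the modular is positively homogeneous: $\rho(\lambda g1_{\Omega_\infty})=\lambda\,\rho(g1_{\Omega_\infty})$.

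\textbf{The statement itself fails in that case.} Take $\Omega=\{0,1,2,\dots\}$ with counting measure, $p(0)=\infty$ and $p(j)=1+1/j$ for $j\ge1$. The standing hypotheses hold: $\Omega_\infty=\{0\}$, $\Omega_1=\emptyset$, $p_+(\Omega\setminus\Omega_\infty)=2$, and $\rho(x)=\sum_{j\ge1}|x_j|^{1+1/j}+|x_0|$. Then $e_0\in S_X$ with $P_k(e_0)=e_0$ for every $k$. The sequence $(e_n)_{n\ge1}$ is normalized and $\mathcal{P}$-null, so it lies in $\mathcal{S}_1(X)$. By Lemma \ref{properties}, $r_n:=\|e_n+\lambda e_0\|$ solves $r_n^{-(1+1/n)}+\lambda r_n^{-1}=1$, and $r_n\to1+\lambda$. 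Hence $\Delta_{P_k}(\lambda)=\lambda>\lambda^{\gamma_k}$ for all $k$ and all $\lambda\in(0,1)$, so $\Delta(\mathcal{P})=1$. The same happens whenever $\mu(\Omega_\infty)>0$ and $\mu(\{1<p<\gamma\})>0$ for every $\gamma>1$. The paper's proof invokes Lemma \ref{lambda2} in exactly the same way, so it has the same defect. The argument is valid only when $\mu(\Omega_\infty)=0$.

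\textbf{A possible repair.} Move the finitely many atoms of $\Omega_\infty$ into the finite-rank part of the scheme: set $P_n(f)=f\cdot 1_{\{\gamma_n\le p<\infty\}}$ and $P_\omega(f)=f\cdot 1_{\{1<p<\infty\}}$. Then $I-P_\omega$ is multiplication by $1_{\Omega_1\cup\Omega_\infty}$, which still has finite rank. Now $P_k(f)=f$ forces ${\rm supp}(f)\subset\{\gamma_k\le p<\infty\}$, so $\rho(\lambda f)\le\lambda^{\gamma_k}$ and your argument for this statement goes through.

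\textbf{A smaller point in Step 2.} You cannot in general pass to a subsequence that both realizes $\limsup_n\|g_n+\lambda f\|$ and has $c_n\to1$, since you only know $\limsup_n c_n=1$. It is simpler to bound every term. By disjointness and the lattice property,
$\|g_n+\lambda f\|\le\max\{1,c_n\}\,(1+\lambda^{\gamma_k})$ for every $n$:
\begin{itemize}
\item if $c_n\le1$, use $|g_n|+\lambda|f|\le|g_n/c_n|+\lambda|f|$;
\item if $c_n>1$, use your rescaling together with $(\lambda/c_n)^{\gamma_k}\le\lambda^{\gamma_k}$.
\end{itemize}
The $\limsup$ of the right-hand side is $1+\lambda^{\gamma_k}$, which is what you need.
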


\begin{proof}

Assume that $\{(f_n),f\}\in \mathcal{S}_1(X)\times S_X$ and $P_k(f)=f$. We have that supp$\, (f)\subset \{\gamma_k\le p\le \infty\}$ and supp$\, (I-P_k)(f_n) \subset \{1<p<\gamma_k\}$. By Lemma  \ref{lambda2} and the $\mathcal{P}$-convergence we have:
$$\limsup_n \Vert f_n+\lambda f\Vert = \limsup_n \Vert (I-P_k)(f_n)+\lambda f\Vert  \leq 1+\lambda^{\gamma_k}.
$$
The above implies that $\lim_{\lambda\to 0^+}{\Delta_{P_k}(\lambda)\over \lambda}=0$ and this holds for every $k\in\mathbb{N}$ so $\Delta(\mathcal{P})=0$.
\end{proof}

As a conclusion of Theorem \ref{main} and the properties of the projection scheme defined on variable Lebesgue space we finally deduce:
\begin{theorem}\label{main2}
Assume that $(\Omega,\Sigma, \mu)$ is a $\sigma$-finite measure space $p_+(\Omega\setminus\Omega_\infty) <\infty$ and the set $\Omega_1\cup\Omega_\infty$ only contains finitely many atoms. Then $L^{p(\cdot)}(\Omega)$ has the FPP.

\end{theorem}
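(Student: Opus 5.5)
The plan is to apply Theorem \ref{main} with the projection scheme $\mathcal{P}=\{P_n:1\le n\le\omega\}$ of Definition \ref{PSV}. All of its hypotheses have essentially been verified in the preceding lemmas of this section, so the proof amounts to collecting them. The only point requiring care is that the standing assumptions used in those lemmas are precisely the hypotheses of Theorem \ref{main2}: $p_+(\Omega\setminus\Omega_\infty)<\infty$, and $\Omega_1\cup\Omega_\infty$ contains only finitely many atoms. The first guarantees Lemma \ref{properties}, which identifies modular and norm convergence and gives $\rho(f/\|f\|)=1$. The second guarantees that $I-P_\omega$ and the parts $B_n$ have finite rank.

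The steps run in order. First, by the lemma right after Definition \ref{PSV}, $\mathcal{P}$ is a premonotone, boundedly complete projection scheme. Since $P_n$ and $P_\omega$ are multiplications by characteristic functions in a lattice norm, $\|P_n\|\le 1$, hence $K(\mathcal{P})=1$. Second, Lemma \ref{kUC2} gives that $L^{p(\cdot)}(\Omega)$ is $\mathcal{P}$-AUC, which is hypothesis (1). Third, Lemma \ref{SS2} gives $S(P_k)\ge 2^{1/\gamma_k}\to 2$, so the space is of $\mathcal{P}$-asymptotically $\ell_1$-type, which is hypothesis (2). Fourth, the last lemma gives $\Delta_{P_k}(\lambda)\le\lambda^{\gamma_k}$ with $\gamma_k>1$, so $\Delta_{P_k}(\lambda)/\lambda\to0$ and $\Delta(\mathcal{P})=0<1/3=1/(1+2K(\mathcal{P}))$, which is hypothesis (3). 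Theorem \ref{main} then yields the FPP.

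The main obstacle is a subtlety in the proofs of Lemmas \ref{SS2} and the $\Delta$ estimate. They use that a $\mathcal{P}$-null sequence $(f_n)$ satisfies $\|P_k f_n\|\to0$ and $\|(I-P_\omega)f_n\|\to0$, so asymptotically $f_n$ lives on $\{1<p<\gamma_k\}$ and is disjoint from the supports of $f$. In the $\Delta$ estimate I will also need to rescale $(I-P_k)f_n$ to norm one, or better apply Lemma \ref{lambda2} to $(I-P_k)f_n/\|(I-P_k)f_n\|$ with $\|(I-P_k)f_n\|\to$ the limsup along a subsequence. This is harmless because $\limsup\|f_n\|=1$ and $\lambda$ only needs to be small.

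Two further checks are routine. In Lemma \ref{lambda2} the contribution of $g$ on $\Omega_\infty$ is an ess sup rather than an integral; it is still bounded by $\lambda\le\lambda^{\gamma}$ when $\gamma$ is replaced appropriately, or this piece is absorbed by noting that $\Omega_\infty$ consists of finitely many atoms. Also, the $\gamma_n$ in Definition \ref{PSV} decrease to $1$, which is what gives $S(\mathcal{P})=2$.
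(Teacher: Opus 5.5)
Your route is the paper's: apply Theorem \ref{main} to the scheme of Definition \ref{PSV} and collect the lemmas of Section \ref{VLSsection}. However, the point you dismiss as routine is a genuine gap, and your patch goes the wrong way, since for $\lambda\in(0,1)$ and $\gamma>1$ one has $\lambda^{\gamma}<\lambda$. On $\Omega_\infty$ the modular is $\operatorname{ess\,sup}|f|$, which is positively homogeneous of degree one. So for $g$ charging $\Omega_\infty$ you only get $\rho(\lambda g)\le\lambda\rho(g)$. Consequently Lemma \ref{lambda2}, and with it the bound $\Delta_{P_k}(\lambda)\le\lambda^{\gamma_k}$, is false for the scheme of Definition \ref{PSV}, where $P_k$ keeps $\Omega_\infty$. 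Citing it, as the section does, does not close the gap.

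Finiteness of the atoms does not help: a single atom $a\in\Omega_\infty$ already destroys hypothesis (3). Indeed, $x=1_a$ satisfies $\|x\|=1$ and $P_kx=x$. Take $x_n$ normalized with support in $\{1<p<\gamma_n\}$. These sets have positive measure in the only relevant case; otherwise bounded $\mathcal{P}$-convergent sequences converge in norm and Corollary \ref{Schur} applies. Then $(x_n)\in\mathcal{S}_1(X)$. For $n\ge k$, the number $\alpha_n:=\|x_n+\lambda x\|\in(1,1+\lambda]$ satisfies, by Lemma \ref{properties}, $\rho(x_n)=1$, $p<\gamma_k$ on the support of $x_n$, and convexity of $t\mapsto t^{-\gamma_k}$,
\[
1=\rho(x_n/\alpha_n)+\lambda/\alpha_n\ge \alpha_n^{-\gamma_k}+\lambda/(1+\lambda)\ge 1-\gamma_k(\alpha_n-1)+\lambda/(1+\lambda).
\]
Hence $\Delta_{P_k}(\lambda)\ge \lambda/(\gamma_k(1+\lambda))$, so $\Delta(\mathcal{P})\ge\sup_k 1/\gamma_k=1$. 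Theorem \ref{main} therefore cannot be applied to this scheme whenever $\Omega_\infty$ contains an atom.

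The repair is to keep $\Omega_\infty$ out of the scheme: set $P_nf=f\cdot 1_{\{\gamma_n\le p<\infty\}}$ and $P_\omega f=f\cdot 1_{\{1<p<\infty\}}$.
\begin{itemize}
\item $I-P_\omega$ is restriction to $\Omega_1\cup\Omega_\infty$, which has finite rank by the atom hypothesis.
\item Premonotonicity, bounded completeness and $\|P_nf-P_\omega f\|\to 0$ are proved exactly as before.
\item $\mathcal{P}$-AUC holds with $A_n=P_n$ and $B_n=0$, by the argument of Lemma \ref{kUC2}.
\item $P_kx=x$ now forces $\operatorname{supp}(x)\subset\{\gamma_k\le p<\infty\}$, where Lemma \ref{lambda2} is valid. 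With your rescaling of $(I-P_k)x_n$, this gives $\Delta(\mathcal{P})=0$.
\end{itemize}
In exchange, $P_kx=0$ now allows $x$ to charge $\Omega_\infty$ in the estimate of $S(P_k)$. This is harmless: the inequality $\rho(h/r)\ge r^{-\gamma}\rho(h)$ for $r>1$ behind Lemma \ref{gammaa} also holds on $\Omega_\infty$, where $\rho(h/r)=\rho(h)/r$.

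There is a second, smaller point in that $S(P_k)$ estimate. Your claim that $f_n$ is asymptotically disjoint from $\operatorname{supp}(f)$ is not literally true, because $f$ may charge $\{1<p<\gamma_k\}$. Replace $f$ by $f\cdot 1_{\{p\ge\gamma_j\}\cup\Omega_1}$ with $j$ large, at a cost of $\|f\cdot 1_{\{1<p<\gamma_j\}}\|\to 0$. Then use $\|P_jf_n\|+\|(I-P_\omega)f_n\|\to 0$ to reduce to disjointly supported, almost normalized functions before invoking Lemma \ref{gammaa}.
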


Combining Theorems \ref{main2} and \ref{failure},  we obtain a complete characterization of the FPP in Lebesgue spaces with variable exponents,  extending the results obtained in \cite{DJ}, where it was proved: A variable Lebesgue space $L^{p(\cdot)}(\Omega)$ fulfills the  weak fixed point property if and only if  it does not contain an  isometric copy of $L_1[0,1]$.

Additionally, a Banach space containing an asymptotically isometric copy of $\ell_1$ fails the FPP \cite{DL} and it was proved in \cite[Section 5]{DJ} that variable Lebesgue spaces containing an asymptotically isometric copy of $\ell_1$ must contain an isometric copy of $\ell_1$. This fact  identifies  a difference with the results obtained in \cite{DL} for nonreflexive closed subspaces of $L_1[0,1]$, which  fail the FPP because of  all of them contain asymptotically isometric copies of $\ell_1$, but not isometric copies of $\ell_1$ necessarily.  Now the announced characterization is the following:

\begin{corollary}\label{co} Assume that $(\Omega,\Sigma, \mu)$ is a $\sigma$-finite measure space. Let $p:\Omega\to [1,\infty]$ be a measurable function. The following conditions are equivalent:

\begin{itemize}

\item[1)] $L^{p(\cdot)}(\Omega)$ has the FPP.

\item[2)] $L^{p(\cdot)}(\Omega)$ does not contain isometrically $\ell_1$.

\item[3)] $p_+(\Omega\setminus \Omega_\infty) <\infty$ and the set $\Omega_1\cup\Omega_\infty$ only contains finitely many atoms.

\end{itemize}
\end{corollary}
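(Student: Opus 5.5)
The plan is to prove the cycle of implications 1) $\Rightarrow$ 2) $\Rightarrow$ 3) $\Rightarrow$ 1), each of which should follow quickly from results already stated.

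For 1) $\Rightarrow$ 2) I use the classical fact that $\ell_1$ fails the FPP: there is a closed convex bounded set $C\subset\ell_1$ (for instance the positive face of the unit sphere) carrying a fixed-point-free nonexpansive map, such as the right shift composed with a suitable affine perturbation. If $J:\ell_1\to L^{p(\cdot)}(\Omega)$ is an isometric embedding, then $J(C)$ is closed, convex and bounded in $L^{p(\cdot)}(\Omega)$. The map $J\circ T\circ J^{-1}$ on $J(C)$ is nonexpansive and has no fixed point. So the FPP forbids isometric copies of $\ell_1$.

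For 2) $\Rightarrow$ 3) I argue by contraposition, using Lemma \ref{failure}. If 3) fails, then one of the following holds.
\begin{itemize}
\item[(a)] $p_+(\Omega\setminus\Omega_\infty)=\infty$.
\item[(b)] $\Omega_1$ contains a nonatomic part of positive measure or infinitely many atoms.
\item[(c)] $\Omega_\infty$ contains a nonatomic part of positive measure or infinitely many atoms.
\end{itemize}
The proof of Lemma \ref{failure} already exhibits an isometric copy of $\ell_1$ in each case. In case (a) it goes through $\ell_\infty$ via \cite[Theorem 2.3]{DJ}. In cases (b) and (c) it goes through $L_1([0,1])$ or through the $\ell_1$-span of normalized atoms. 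One point needs care in case (c). On $\Omega_\infty$ the modular is an essential supremum, so normalized indicators of disjoint atoms span $\ell_\infty$-like sums, not $\ell_1$-like sums. There is still an isometric copy of $\ell_1$ here, because $\ell_\infty$, and likewise $L^\infty$ of a nonatomic piece, contains isometric copies of every separable space. Since Lemma \ref{failure} asserts exactly this, I will simply cite it.

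For 3) $\Rightarrow$ 1) I apply Theorem \ref{main2}, whose hypotheses are exactly condition 3). That theorem in turn rests on the lemmas of this section, which verify the four requirements of Theorem \ref{main} for the scheme of Definition \ref{PSV}.
\begin{itemize}
\item[(i)] The scheme is premonotone and boundedly complete.
\item[(ii)] The space is $\mathcal P$-AUC (Lemma \ref{kUC2}).
\item[(iii)] The space is of $\mathcal P$-asymptotically $\ell_1$-type (Lemma \ref{SS2}).
\item[(iv)] $\Delta(\mathcal P)=0<1/3$, where $1/(1+2K(\mathcal P))=1/3$ because $K(\mathcal P)=1$.
\end{itemize}

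The corollary is therefore a formal assembly of earlier results. The only genuine content would be in 2) $\Rightarrow$ 3), namely the $\ell_1$ embeddings. Among these, the infinitely-many-atoms case inside $\Omega_\infty$ is the one where a naive attempt (taking the span of the atoms directly) gives the wrong geometry, and it must be routed through the universality of $\ell_\infty$. Since Lemma \ref{failure} is available, I will cite it rather than re-derive it.
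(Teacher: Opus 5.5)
Your proposal is correct and follows the paper's argument: 1)$\Rightarrow$2) because an isometric copy of $\ell_1$ destroys the FPP, 2)$\Rightarrow$3) by Lemma~\ref{failure}, and 3)$\Rightarrow$1) by Theorem~\ref{main2}; your remark that the atoms of $\Omega_\infty$ span $\ell_\infty$ rather than $\ell_1$ (so one must use the universality of $\ell_\infty$) correctly fills in the paper's ``Likewise''. The same fact, $\rho(\lambda g)=\lambda\rho(g)$ for $g$ supported on $\Omega_\infty$, also affects your item (iv), which you inherit from the paper: the bound $\rho(\lambda g)\le\lambda^{\gamma}$ tacitly used in Lemma~\ref{lambda2} fails on $\Omega_\infty$, so for the scheme of Definition~\ref{PSV} one gets $\Delta(\mathcal P)=1$ whenever $\Omega_\infty$ contains an atom $A$ and $\mu(\{1<p<\gamma\})>0$ for every $\gamma>1$ (normalized $f_n$ supported in $\{1<p<\gamma_n\}$ satisfy $\|f_n+\lambda\chi_A\|\to 1+\lambda$), and this is repaired by taking $P_n$ and $P_\omega$ to be multiplication by $1_{\{\gamma_n\le p<\infty\}}$ and $1_{\{1<p<\infty\}}$, which puts the finitely many atoms of $\Omega_1\cup\Omega_\infty$ into the finite-rank part $I-P_\omega$ while keeping all hypotheses of Theorem~\ref{main}.
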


\section{Consequences and Further Applications}

\medskip

\textbf{1. Application to Nakano sequence spaces.}

\medskip

Following the techniques introduced in \cite{todos}, it was proved in \cite[Corollary 4.7]{DJ} that the Nakano space $\ell^{p_n}$  has the FPP whenever $\lim_np_n=1$. The authors raised in \cite{DJ} the following question: Consider  a bounded sequence $(p_n)\subset (1,+\infty)$ with $\liminf_n p_n=1$ (which implies that the Nakano space $\ell^{p_n}$ is nonreflexive).  Does the space $\ell^{p_n}$  have the FPP?

  \medskip

 Corollary \ref{co} gives an affirmative answer to this question. Indeed, every bounded sequence  $(p_n)\subset (1,+\infty)$ with $\liminf_n p_n=1$  satisfies condition (3) in  Corollary \ref{co}. Notice that, in constrast with the stardard Schauder projections used in \cite[Corollary 4.7]{DJ}, we use in Corollary \ref{co} the projections $P_n(x)=\sum_{n\in A_n} x(n)e_n$ where $A_n=\{n\in\mathbb{N}: 1<p_n<\gamma_n\}$ where $(\gamma_n)$ is a nondecresing sequence in $(1,+\infty)$ with $\lim_n \gamma_n=1$ chosen beforehand.

\medskip

  \textbf{2. Examples of nonreflexive Banach function spaces with the FPP.}

     \medskip

Corollary \ref{co} also provides a large family of nonreflexive Banach spaces with the FPP that contain infinite-dimensional reflexive subspaces, none of which  is isomorphic to $\ell_1$. In fact,
 there exists a nonreflexive variable Lebesgue space $L^{p(\cdot)}(\Omega)$ which satisfies the FPP and
contains isometric copies of infinitely many spaces  $L^{p_n}([0,1])$, $1<p_n<\infty$. As an illustration, consider the following example:

\medskip

Assume that $\{p_n\}$ is a  bounded sequence in $(1,\infty)$ such that $\liminf_n p_n=1$.  Assume $\Omega=(0,1)$ and define $p:\Omega\to (1,\infty)$ as the step function $p(t)=p_n$ if $t\in (1/2^{n},1/2^{n-1})$. Then,  $L^{p(\cdot)}(\Omega)$ contains, as isometric subspaces, all the spaces  $L^{p_n}((1/2^{n},1/2^{n-1}))$, $n=1,2,..$ that are isometric to  $L^{p_n}([0,1])$.  Because $p_- =1$ the space $L^{p(\cdot)}(\Omega)$  is nonreflexive \cite[Corollary 2.8]{cruz} and it satisfies the FPP because it satisfies condition 2) or 3) in Corollary \ref{co}. Thus, we have constructed  a  nonreflexive space with the FPP which contains isometric copies of infinitely many reflexive $L^p([0,1])$ spaces.


\medskip

\textbf{3. Extension to mappings of type (L).}

\medskip

For any bounded  sequence $(x_n)$ in a Banach space $X$ and any $\rho>0$, the level set $A(x_n,\rho)$ is defined by
$$ A(x_n,\rho)=\{x\in C: \limsup_n\Vert x_n-x\Vert\leq \rho\}.$$
It is clear that $A(x_n,\rho)$ is a (possibly empty)  closed, bounded and convex subset of $C$.  The proof of Theorem \ref{main} shows that the nonexpansiveness assumption is only used through the following two properties.

(1) There exists an a.f.p.s. for $T$ in every convex, closed, bounded, $T$-invariant subset of the domain  (used in Lemma \ref{afps}).

(2) $A(x_n,\rho)$ is $T$-invariant whenever $(x_n)$ is an a.f.p.s. 

\medskip

Consequently, Theorem \ref{main} and Theorem \ref{main2} together  with Corollary \ref{co} remain valid for every continuous mapping satisfying (1) and (2). This includes, for instance,  the class of mappings of type (C) defined by Suzuki \cite{Suz}, and more generally  mappings of $(L)$-type, introduced in \cite{LLMo}. In particular, all results in \cite{MoLL} about existence  of fixed point for mappings of $(L)$-type in nonreflexive spaces follow from Theorem \ref{main}.

\end{document}